\definecolor{bgcolor}{rgb}{0.76,0.88,0.50}
\definecolor{bgcolor0}{rgb}{0.93,0.99,1}
\definecolor{bgcolor1}{rgb}{0.8,1,1}
\definecolor{bgcolor2}{rgb}{0.8,1,0.8}
\definecolor{bgcolor3}{rgb}{0.50,0.90,0.50}
\definecolor{mydarkgreen}{rgb}{0.15,0.5,0.26}
\newcommand{\algname}[1]{{#1}}
\newcommand{\algnamesmall}[1]{{#1}}
\newcommand{\norm}[1]{\left\| #1 \right\|}
\newcommand{\inp}[2]{\left\langle#1,#2\right\rangle} 
\newcommand{\abs}[1]{\left| #1 \right|}
\newcommand{\R}{\mathbb{R}} 
\newcommand{\N}{\mathbb{N}} 
\newcommand{\Exp}[1]{{\mathbb{E}}\left[#1\right]}
\newcommand{\ExpSub}[2]{{\mathbb{E}}_{#1}\left[#2\right]}
\newcommand{\Prob}[1]{\mathbb{P}\left(#1\right)} 
\newcommand{\ProbCond}[2]{\mathbb{P}\left(#1\middle\vert#2\right)}
\newcommand{\cO}{\mathcal{O}}
\theoremstyle{plain}
\newtheorem{theorem}{Theorem}[section]
\newtheorem*{theorem*}{Theorem}
\newtheorem{proposition}[theorem]{Proposition}
\newtheorem{lemma}[theorem]{Lemma}
\newtheorem{corollary}[theorem]{Corollary}
\theoremstyle{definition}
\newtheorem{definition}[theorem]{Definition}
\newtheorem{assumption}[theorem]{Assumption}
\theoremstyle{remark}
\newtheorem{remark}[theorem]{Remark}
\newcommand{\eqdef}{:=}
\newcommand{\vast}{\bBigg@{4}}
\icmltitlerunning{Toward a Unified Theory of Gradient Descent under Generalized Smoothness}
\begin{document}

\twocolumn[
\icmltitle{Toward a Unified Theory of Gradient Descent under Generalized Smoothness}
\icmlsetsymbol{equal}{*}

\begin{icmlauthorlist}
\icmlauthor{Alexander Tyurin}{yyy,comp}
\end{icmlauthorlist}

\icmlaffiliation{yyy}{AIRI, Moscow, Russia}
\icmlaffiliation{comp}{Skoltech, Moscow, Russia}

\icmlcorrespondingauthor{Alexander Tyurin}{alexandertiurin@gmail.com}
\icmlkeywords{Machine Learning, ICML}

\vskip 0.3in
]

\printAffiliationsAndNotice{}  

\begin{abstract}
    We study the classical optimization problem $\min_{x \in \R^d} f(x)$ and analyze the gradient descent (\algname{GD}) method in both nonconvex and convex settings. It is well-known that, under the $L$–smoothness assumption ($\|\nabla^2 f(x)\| \leq L$), the optimal point minimizing the quadratic upper bound $f(x_k) + \inp{\nabla f(x_k)}{x_{k+1} - x_k} + \nicefrac{L}{2} \norm{x_{k+1} - x_k}^2$ is $x_{k+1} = x_k - \gamma_k \nabla f(x_k)$ with step size  $\gamma_k = \nicefrac{1}{L}$. Surprisingly, a similar result can be derived under the $\ell$-generalized smoothness assumption ($\|\nabla^2 f(x)\| \leq \ell(\norm{\nabla f(x)})$). In this case, we derive the step size $$\gamma_k = \int_{0}^{1} \frac{d v}{\ell(\norm{\nabla f(x_k)} + \norm{\nabla f(x_k)} v)}.$$ Using this step size rule, we improve upon existing theoretical convergence rates and obtain new results in several previously unexplored setups.
\end{abstract}

\section{Introduction}
We consider optimization problems of the form
\begin{align}
  \label{eq:main_problem}
  \textstyle \min \limits_{x \in \R^d} f(x),
\end{align}
where $f\,:\,\R^d \to \R \cup \{\infty\}.$ Our goal is to find an $\varepsilon$--stationary point, $\bar{x} \in \R^d$ such that $\norm{\nabla f(\bar{x})}^2 \leq \varepsilon,$ in the nonconvex setting, and an $\varepsilon$-solution, $\bar{x} \in \R^d$ such that $f(\bar{x}) - \inf_{x \in \R^d} f(x) \leq \varepsilon,$ in the convex setting. We define $\mathcal{X} = \left\{x \in \R^d\,|\, f(x) < \infty\right\},$ and assume that $\mathcal{X}$ is open convex, $f$ is smooth on $\mathcal{X},$ and continuous on the closure of $\mathcal{X}.$ 

This is a classical and well-studied problem in optimization \citep{nesterov2018lectures,lan2020first}. We investigate arguably the most popular numerical first-order method, gradient descent (\algname{GD}): 
\begin{align}
  \label{eq:gd}
  x_{k+1} = x_{k} - \gamma_k \nabla f(x_k),
\end{align}
where $x_0$ is starting point, and $\{\gamma_k\}$ are step sizes. 

\algname{GD} is pretty well understood under the traditional $L$--smoothness assumption, i.e.,
$\norm{\nabla f(x) - \nabla f(y)} \leq L \norm{x - y}$ or $\norm{\nabla^2 f(x)} \leq L$ for all $x, y \in \mathcal{X}.$ However, instead of $L$--smoothness, we investigate $\ell$--smoothness, i.e., 
\begin{align}
  \label{eq:l_smooth_first}
  \norm{\nabla^2 f(x)} \leq \ell(\norm{\nabla f(x)})
\end{align} for all $x \in \mathcal{X}$ (see Assumption~\ref{ass:gen_smooth}), where $\ell$ is any non-decreasing positive locally Lipchitz function. This is a recent assumption that captures a much wider set of functions \citep{li2024convex}. In particular, if $\ell(s) = L,$ then we get $L$--smoothness. If $\ell(s) = L_0 + L_1 s,$ then we get $(L_0, L_1)$--smoothness \citep{zhang2019gradient}. $L$--smoothness can typically capture ``quadratic--like'' functions and does not include even $f(x) = x^p$ for $p > 2$ or $f(x) = e^x.$ A similar problem arises with $(L_0, L_1)$--smoothness: it does not include $f(x) = -\log x,$ for instance. The class of $\ell$--smooth functions can include all these examples and more \citep{li2024convex}.

\begin{table*}[ht]
  \caption{Summary of convergence rates for various \algname{GD} methods under generalized smoothness assumptions in the \textbf{nonconvex} setting. Abbreviations: $R \eqdef \norm{x_0 - x_*},$ $\Delta \eqdef f(x_0) - f(x_*),$ $\varepsilon =$ error tolerance. With the \textcolor{mydarkgreen}{green} color we highlight our new results and improvements.}
  \label{table:complexities}
  \centering 
  \begin{threeparttable}
  \bgroup
  \def\arraystretch{2}
  \begin{tabular}[t]{c|cc}
    \Xhline{0.5pt}
   \hline
   \multirow{1}{*}{\bf Setting} & \bf Rate & \bf References \\
     \Xhline{0.5pt}
     \hline
     \makecell{$L$--Smoothness \\ \tiny $\left(\|\nabla^2 f(x)\| \leq L\right)$} & $\frac{L \Delta}{\varepsilon}$ & --- \\
     \Xhline{0.5pt}
     \hline
     \makecell{$(L_0, L_1)$--Smoothness \\ \tiny $\left(\|\nabla^2 f(x)\| \leq L_0 + L_1 \|\nabla f(x)\| \right)$} & $\frac{L_0 \Delta}{\varepsilon} + \frac{L_1 \Delta}{\sqrt{\varepsilon}}$ & \citep{vankov2024optimizing} \\
     \Xhline{0.5pt}
     \hline
     \multirow{2.4}{*}{\makecell{$(\rho, L_0, L_1)$--Smoothness \\ with $0 \leq \rho \leq 2$ \\ \tiny $\left(\|\nabla^2 f(x)\| \leq L_0 + L_1 \|\nabla f(x)\|^{\rho}\right)$} } & 
     $\frac{L_0 \Delta + L_1 \norm{\nabla f(x_0)}^{\rho} \Delta}{\varepsilon}$
     & \makecell{\citep{li2024convex} \\ (no guarantees when $\rho = 2$)} \\ \cline{2-3}
     & $\frac{L_0 \Delta}{\varepsilon} + {\textcolor{mydarkgreen}{\frac{L_1 \Delta}{\varepsilon^{(2 - \rho) / 2}}}}$ & \makecell{Sec.~\ref{sec:nc_rho_small} (\textbf{new}) \\ ($\nicefrac{L_0 \Delta}{\varepsilon} + L_1 \Delta$ when $\rho = 2$)} \\
     \Xhline{0.5pt}
     \hline
     \makecell{\makecell{\makecell{$(\rho, L_0, L_1)$--Smoothness \\ with $\rho > 2$}}} & $\textcolor{mydarkgreen}{\frac{L_0 \Delta}{\varepsilon} + L_1 \Delta (2 M)^{\rho - 2}}$ & \makecell{Sec.~\ref{sec:nonconvex_rho} (\textbf{new}) \\ (requires A.\ref{ass:bounded_grad})} \\
     \Xhline{0.5pt}
     \hline
     \makecell{Exponential growth of $\ell$ \\ \tiny $\left(\|\nabla^2 f(x)\| \leq L_0 + L_1 \|\nabla f(x)\|^2 \exp(\|\nabla f(x)\|)\right)$} & $\textcolor{mydarkgreen}{\frac{L_0 \Delta}{\varepsilon} + L_1 \Delta e^{2 M}}$ & \makecell{Sec.~\ref{sec:exp} (\textbf{new}) \\ (requires A.\ref{ass:bounded_grad})} \\
    \hline
    \Xhline{0.5pt}
    \end{tabular}
    \egroup
    \end{threeparttable}
\end{table*}

\subsection{Related work}
\label{sec:related_work}
\textbf{$L$--smoothness:} Under $L$--smoothness, it is well-known that \algname{GD} converges to an $\varepsilon$--stationary point after $\cO\left(\nicefrac{L \Delta}{\varepsilon}\right)$, and to an $\varepsilon$-solution after $\cO\left(\nicefrac{L R^2}{\varepsilon}\right)$ iterations \citep{nesterov2018lectures}, where $\Delta \eqdef f(x_0) - f^*,$ $R \eqdef \norm{x_0 - x_*},$ $f^* = \inf_{x \in \R^d} f(x),$ and $x_* \in \R^d$ is a solution of \eqref{eq:main_problem}. Interestingly, this result can be further improved by employing non-constant step sizes \citep{altschuler2024acceleration, grimmer2024accelerated}, although relying on the $L$--smoothness assumption. \\~\\
\textbf{$(L_0, L_1)$--smoothness and nonconvex setup:} In the nonconvex setting, \citet{zhang2019gradient} analyzed the $(L_0, L_1)$--smoothness assumption, which is stated for twice differentiable functions as $\norm{\nabla^2 f(x)} \leq L_0 + L_1 \norm{\nabla f(x)}$ for all $x \in \mathcal{X}.$ \citet{zhang2019gradient} showed that a clipped version of \algname{GD}, i.e., \eqref{eq:gd} with appropriately chosen step sizes, finds an $\varepsilon$--stationary point after $\cO\left(\nicefrac{L_0 \Delta}{\varepsilon} + \nicefrac{L_1^2 \Delta}{L_0}\right)$ iterations. Subsequently, \citet{crawshaw2022robustness,chen2023generalized,wang2023convergence,koloskova2023revisiting,li2024convex,li2024convergence,hubler2024parameter,vankov2024optimizing} considered the same setup, where the state-of-the-art theoretical iteration complexity $\cO\left(\nicefrac{L_0 \Delta}{\varepsilon} + \nicefrac{L_1 \Delta}{\sqrt{\varepsilon}}\right)$ is obtained by \citet{vankov2024optimizing}, without requiring a bounded gradient assumption, dependence on $\norm{\nabla f(x_0)}$, or $L$-smoothness of $f$. \\~\\
\textbf{$(L_0, L_1)$--smoothness and convex setup:} Convex problems were investigated by \citet{koloskova2023revisiting}. They showed that \eqref{eq:gd} with normalized step sizes guarantees $\cO\left(\nicefrac{L_0 R^2}{\varepsilon} + \sqrt{\nicefrac{L}{\varepsilon}} L_1 R^2\right)$ rate, which requires the $L$--smoothness of $f.$ \citet{li2024convex}, using \algname{GD} and accelerated \algname{GD} \citep{nesterov1983method}, obtained $\cO\left(\nicefrac{(L_0 + L_1 \norm{\nabla f(x_0)}) R^2}{\varepsilon}\right)$ and $\cO\left(\sqrt{\nicefrac{(L_0 + L_1 \norm{\nabla f(x_0)}) R^2}{\varepsilon}}\right),$ which depend on the norm of the initial gradient $\norm{\nabla f(x_0)}.$ Furthermore, \citet{takezawa2024polyak} get $\cO\left(\nicefrac{L_0 R^2}{\varepsilon} + \sqrt{\nicefrac{L}{\varepsilon}} L_1 R^2\right)$ rate using the Polyak step size \citep{polyak1987introduction}, which also requires $L$--smoothness. Using adaptive step sizes, \citet{gorbunov2024methods,vankov2024optimizing} concurrently provided 
the convergence rate $\cO\left(\nicefrac{L_0 R^2}{\varepsilon} + L_1^2 R^2\right).$ Also, with a fixed step size, \citep{li2024convex} proved the convergence rate $\cO\left(\nicefrac{L_0 R^2}{\varepsilon} + \nicefrac{L_1 \norm{\nabla f(x_0)} R^2}{\varepsilon}\right).$
\citet{gorbunov2024methods,vankov2024optimizing} also analyzed accelerated versions of \algname{GD}, but \citet{gorbunov2024methods} get an exponential dependence on $L_1$ and $R,$ while \citet{vankov2024optimizing} requires solving an auxiliary one-dimensional optimization problem in each iteration. \\~\\
\textbf{$\ell$--smoothness:} \citet{li2024convex} introduced the $\ell$--smoothness assumption and analyzed \algname{GD} as well as an accelerated version of \algname{GD}. They obtained convergence rates of $\cO\left(\nicefrac{\ell(\norm{\nabla f(x_0)}) R^2}{\varepsilon}\right)$ and $\cO\left(\sqrt{\nicefrac{\ell(\norm{\nabla f(x_0)}) R^2}{\varepsilon}}\right)$ for \algname{GD} and accelerated \algname{GD} in the convex setting, and a rate of $\cO\left(\nicefrac{\ell(\norm{\nabla f(x_0)}) \Delta}{\varepsilon}\right)$ in the nonconvex setting for \algname{GD}. In all cases, their results depend on $\ell(\norm{\nabla f(x_0)}) / \varepsilon$ and require that the function $\ell(s)$ grows more slowly than $s^2$ in the nonconvex setting. For instance, they cannot guarantee convergence when $\ell(s) = L_0 + L_1 s^2.$ The $\ell$--smoothness assumption was also consider in online learning \citep{xie2024gradient}.

\begin{table*}[ht]
  \caption{Summary of convergence rates for various \algname{GD} methods under generalized smoothness assumptions in the \textbf{convex} setting. Abbreviations: $R \eqdef \norm{x_0 - x_*},$ $\Delta \eqdef f(x_0) - f(x_*),$ $M_0 \eqdef \norm{\nabla f(x_0)},$ $\varepsilon =$ error tolerance. With the \textcolor{mydarkgreen}{green} color we highlight our new results and improvements.}
  \label{table:complexities_convex}
  \centering 
  \scriptsize
  \begin{threeparttable}
  \bgroup
  \def\arraystretch{2}
  \begin{tabular}[t]{c|cc}
   \Xhline{0.5pt}
   \hline
   \multirow{1}{*}{\bf Setting} & \bf Rate & \bf References \\
    \Xhline{0.5pt}
     \hline
     \makecell{$L$--Smoothness} & $\frac{L R^2}{\varepsilon}$ & ---\textsuperscript{\color{blue}(a)} \\
     \Xhline{0.5pt}
     \hline
     \multirow{2.5}{*}{\makecell{\makecell{$(L_0, L_1)$--Smoothness}}} & $\frac{L_0 R^2}{\varepsilon} + \min\{L_1^2 R^2, \frac{L_1 M_0 R^2}{\varepsilon}\}$ & \makecell{\citep{li2024convex} \\ \citep{gorbunov2024methods} \\ \citep{vankov2024optimizing}} \\ \cline{2-3}
     & $\frac{L_0 R^2}{\varepsilon} + \min\left\{\textcolor{mydarkgreen}{\frac{L_1 \Delta^{1 / 2} R}{\varepsilon^{1 / 2}}}, L_1^2 R^2, \frac{L_1 M_0 R^2}{\varepsilon}\right\}$ & Sec.~\ref{sec:convex_rho_smooth} and \ref{sec:convex_1_smooth_alt} (\textbf{new}) \\ 
     \Xhline{0.5pt}
     \hline
     \multirow{2.8}{*}{\makecell{\makecell{$(\rho, L_0, L_1)$--Smoothness} \\ with $0 \leq \rho \leq 1$}} & 
     $\frac{L_0 R^2 + L_1 M_0^{\rho} R^2}{\varepsilon}$
     & \makecell{\citep{li2024convex}} \\ \cline{2-3}
     & $\frac{L_0 R^2}{\varepsilon} + \min\left\{\textcolor{mydarkgreen}{\frac{L_1 \Delta^{\rho / 2} R^{2 - \rho}}{\varepsilon^{1 - \rho / 2}}, \frac{L_1^{2 / (2 - \rho)} R^2}{\varepsilon^{2 (1 - \rho) / (2 - \rho)}}}, \frac{L_1 M_0^{\rho} R^2}{\varepsilon}\right\}$ & Sec.~\ref{sec:convex_rho_smooth} and \ref{sec:convex_1_smooth_alt} (\textbf{new}) \\
     \Xhline{0.5pt}
     \hline
     \multirow{2.8}{*}{\makecell{\makecell{$(\rho, L_0, L_1)$--Smoothness} \\ with $1 < \rho < 2$}} & 
     $\frac{L_0 R^2 + L_1 M_0^{\rho} R^2}{\varepsilon}$
     & \makecell{\citep{li2024convex}} \\ \cline{2-3}
     & $\frac{L_0 R^2}{\varepsilon} + \min\left\{\textcolor{mydarkgreen}{\frac{L_1 \Delta^{\rho / 2} R^{2 - \rho}}{\varepsilon^{1 - \rho / 2}}, L_1^{\frac{2}{2 - \rho}}R^{2} \Delta^{\frac{2 (\rho - 1)}{2 - \rho}}}, \frac{L_1 M_0^{\rho} R^2}{\varepsilon}\right\}$ & Sec.~\ref{sec:convex_rho_smooth} and \ref{sec:convex_1_smooth_alt} (\textbf{new}) \\
     \Xhline{0.5pt}
     \hline
     \multirow{2.4}{*}{\makecell{\makecell{$(\rho, L_0, L_1)$--Smoothness} \\ with $\rho \geq 2$}} & 
     $\frac{L_0 R^2 + L_1 M_0^{\rho} R^2}{\varepsilon}$
     & \makecell{\citep{li2024convex}} \\ \cline{2-3}
     & \rule{0pt}{5ex} $\frac{L_0 R^2}{\varepsilon} + \min\Bigg\{\textcolor{mydarkgreen}{L_1 \Delta (2 M_0)^{\rho - 2} + \frac{L_0^{\frac{\rho}{2 + \rho}} \Delta^{\frac{\rho}{2 + \rho}} L_1^{\frac{2}{2 + \rho}} R^{\frac{4}{2 + \rho}}}{\varepsilon^{\frac{2}{2 + \rho}}}}, \frac{L_1 M_0^\rho R^2}{\varepsilon}\Bigg\}$ & Sec.~\ref{sec:rho_big} (\textbf{new}) \\
     \Xhline{0.5pt}
     \hline
     \multirow{3.0}{*}{\makecell{General Result \\ (works with any $\ell$)}} & $\frac{\ell(M_0) R^2}{\varepsilon}$ & \makecell{\citep{li2024convex}} \\ \cline{2-3}
     & \makecell{\textcolor{mydarkgreen}{$\frac{\ell(0) R^2}{\varepsilon} + \min\left\{\bar{T}, \frac{\ell(M_0) R^2}{\varepsilon}\right\},$} \\ where $\bar{T}$ \textbf{does not} depend on $\varepsilon.$ \\ (convergence rate is $\nicefrac{\ell(0) R^2}{\varepsilon}$ for $\varepsilon$ small enough)} & Sec.~\ref{sec:gen} (\textbf{new}) \\
    \Xhline{0.5pt}
    \hline
    \end{tabular}
    \egroup
    \begin{tablenotes}
      \scriptsize
      \item [{\color{blue}(a)}] The canonical analysis can be found in \citep{nesterov2018lectures,lan2020first}. However, using non-constant step sizes, it is possible to improve the complexity of \algname{GD} under $L$--smoothness \citep{altschuler2024acceleration, grimmer2024accelerated}.
    \end{tablenotes}
    \end{threeparttable}
\end{table*}

\subsection{Contributions}
We take the next step in the theoretical understanding of optimization for $\ell$--smooth functions. Using new bounds, we discover a new step size rule in Algorithm~\ref{alg:main}, which not only improves previous theoretical guarantees but also provides results in settings where \algname{GD}'s convergence was previously unknown (see Tables~\ref{table:complexities} and \ref{table:complexities_convex}). In particular, \\
$\bullet$ We prove new key auxiliary results, Lemmas~\ref{lemma:first_alt} and \ref{lemma:second_alt}, which generalize the classical results and allow us to derive the step size
  $\gamma_k = \int_{0}^{1} \frac{d v}{\ell(\norm{\nabla f(x_k)} + \norm{\nabla f(x_k)} v)}.$ \\
$\bullet$ Using this step size, we significantly improve the convergence rate established by \citet{li2024convex} in the nonconvex setting. For instance, under the $(\rho, L_0, L_1)$--smooth assumption with $0 \leq \rho < 2$, we improve their convergence rate of \algname{GD} from $\frac{L_0 \Delta}{\varepsilon} + \frac{L_1 \norm{\nabla f(x_0)}^{\rho} \Delta}{\varepsilon}$ to $\frac{L_0 \Delta}{\varepsilon} + \frac{L_1 \Delta}{\varepsilon^{(2 - \rho) / 2}}$. Moreover, \citet{li2024convex} do not guarantee the convergence of \algname{GD} when $\rho = 2$, while our theory, for the first time, establishes the rate $\frac{L_0 \Delta}{\varepsilon} + L_1 \Delta,$ which we believe is important in view of the motivating examples from Section~\ref{sec:examples}. Despite the $(\rho, L_0, L_1)$--smoothness assumption, our theory remains applicable to virtually any $\ell$ functions. \\
$\bullet$ In the convex setting, we also improve all known previous results (see Table~\ref{table:complexities_convex}). Specifically, we refine the dominating term\footnote{terms that dominate when $\varepsilon$ is small.} from $\frac{\ell(\norm{\nabla f(x_0)}) R^2}{\varepsilon}$ \citep{li2024convex} to $\frac{\ell(0) R^2}{\varepsilon}$ under the $\ell$--smoothness assumptions, which is significant improvement because the initial norm $\norm{\nabla f(x_0)}$ can be large. Additionally, we derive tighter non-dominating terms, further enhancing the current theoretical state-of-the-art results \citep{li2024convex,gorbunov2024methods,vankov2024optimizing}. Even under the well-explored $(L_0, L_1)$--smoothness assumption, we discover a new convergence rate $\frac{L_0 R^2}{\varepsilon} + \min\left\{\frac{L_1 \Delta^{1 / 2} R}{\varepsilon^{1 / 2}}, L_1^2 R^2, \frac{L_1 M_0 R^2}{\varepsilon}\right\},$ where the first term in $\min$ is new and can be better in practical regimes (see Section~\ref{sec:alt_convex}). \\
$\bullet$ We extend our theoretical results to stochastic optimization and verify the obtained results with numerical experiments.

\section{Motivating Examples}
\label{sec:examples}
One notable reason for the popularity of the $(L_0, L_1)$--assumption, i.e., $\norm{\nabla^2 f(x)} \leq L_0 + L_1 \norm{\nabla f(x)}$ for all $x \in \mathcal{X},$ is the observation that, in modern neural networks, the spectral norms of the Hessians exhibit a linear dependence on the norm of the gradients \citep{zhang2019gradient}. However, there are many examples when $(L_0, L_1)$--assumption fails to hold. The simplest example is $f(x) = - \log x,$ which has the Hessian (second derivative) that depends quadratically on the norm of gradient (first derivative). Moreover, we argue that $(L_0, L_1)$--assumption is not appropriate even for modern optimization problems with neural networks. Indeed, consider a toy example $f \,:\, \R^2 \to \R$ such that $f(x, y) = \log(1 + \exp(- x y)).$ A two-layers neural network with log loss, one feature, and one sample reduces to this function. Then $\nabla f(x, y) = -\frac{1}{1 + e^{x y}}(y, x)^{\top} \in \R^{2}$ and $\nabla^2 f(x, y) = (\frac{e^{xy}}{(1 + e^{xy})^2} y^2, \frac{e^{xy}}{(1 + e^{xy})^2} x y - \frac{1}{1 + e^{xy}}; \frac{e^{xy}}{(1 + e^{xy})^2} x y - \frac{1}{1 + e^{xy}}, \frac{e^{xy}}{(1 + e^{xy})^2} x^2) \in \R^{2 \times 2}.$ In the regime $x y = -1,$ when the sample is not correctly classified, we get $\nabla f(x, y) \approx -(y, x)^{\top}$ and $\nabla^2 f(x, y) \approx (y^2, -1; -1, x^2) \in \R^{2 \times 2}.$ Notice that $\norm{\nabla^2 f(x, y)} \approx y^2$ and $\norm{\nabla f(x, y)} \approx y$ if $y \to \infty$ and $x = 1 / y,$ Thus, a more appropriate assumption would be $(\rho, L_0, L_1)$--smoothness, i.e., $\norm{\nabla^2 f(x, y)} \leq L_0 + L_1 \norm{\nabla f(x, y)}^{\rho}$ for all $x \in \mathcal{X}$ with $\rho \geq 2.$ Furthermore, there exist examples of functions that satisfy $(\rho, L_0, L_1)$--smoothness with only $\rho > 2.$ For instance, take $f(x) = -\sqrt{1 - x},$ which is $(3, L_0, L_1)$--smooth. This is why exploring a more general assumption, $\ell$-smoothness, is important. Another example where the $(L_0, L_1)$--assumption is not satisfied in practice is given in \citep{cooper2024theoretical,chen2023generalized}.
\section{Assumptions in Nonconvex World}
Following \cite{li2024convex}, we consider the following assumption:
\begin{assumption}
  \label{ass:gen_smooth}
  A function $f\,:\,\R^d \to \R \cup \{\infty\}$ is $\ell$--smooth if $f$ is twice differentiable on $\mathcal{X},$ $f$ is continuous on the closure of $\mathcal{X},$ and there exists a 
  \emph{non-decreasing positive locally Lipchitz} 
  function $\ell\,:\,[0, \infty) \to (0, \infty)$ such that
  \begin{align}
    \label{eq:main_ass}
    \norm{\nabla^2 f(x)} \leq \ell(\norm{\nabla f(x)})
  \end{align}
  for all $x \in \mathcal{X}.$
\end{assumption}
This assumption generalizes $L$--smoothness, $(L_0, L_1)$--smoothness, and $(\rho, L_0, L_1)$--smoothness, i.e., $\norm{\nabla^2 f(x)} \leq L_0 + L_1 \norm{\nabla f(x)}^{\rho}$ for all $x \in \mathcal{X}.$ In order to introduce Assumption~\ref{ass:gen_smooth}, we have to assume that $f$ is twice differentiable. Through this paper, we do not calculate Hessians of $f$ in methods, and we only use them in Assumption~\ref{ass:gen_smooth}. Later, we will prove \eqref{eq:gen_smooth_alt} (Lemma~\ref{lemma:first_alt}) that involves only gradients of $f,$ and it can taken as an alternative to Assumption~\ref{ass:gen_smooth} if $f$ is not twice differential. However, \eqref{eq:gen_smooth_alt} is much less interpretable and intuitive.

\begin{algorithm}[t]
  \caption{Gradient Descent (\algname{GD}) with $\ell$-Smoothness}
  \label{alg:main}
  \begin{algorithmic}[1]
  \STATE \textbf{Input:} starting point $x_0 \in \mathcal{X},$ function $\ell$ from Assumption~\ref{ass:gen_smooth}
  \FOR{$k = 0, 1, \dots$}
  \STATE Find the step size 
  \begin{align*}
    \gamma_k = \int_{0}^{1} \frac{d v}{\ell(\norm{\nabla f(x_k)} + \norm{\nabla f(x_k)} v)}
  \end{align*}
  analytically or numerically (Simpson's rule or Fig.~\ref{fig:alg})
  \STATE $x_{k+1} = x_k - \gamma_k \nabla f(x_k)$
  \ENDFOR
  \end{algorithmic}
\end{algorithm}

We start out work with the nonconvex setting. Thus additionally to Assumption~\ref{ass:gen_smooth}, we only consider the standard assumption that the function is bounded:
\begin{assumption}
  \label{ass:lower_bound}
  There exists $f^* \in \R$ such that $f(x) \geq f^*$ for all $x \in \mathcal{X}.$ We define $\Delta \eqdef f(x^0) - f^*,$ where $x^0$ is a starting point of numerical methods.
\end{assumption}
\textbf{Notations:} $\R_+ \eqdef [0, \infty);$ $\N \eqdef \{1, 2, \dots\};$ $\norm{x}$ is the output of the standard Euclidean norm for all $x \in \R^d$; $\inp{x}{y} = \sum_{i=1}^{d} x_i y_i$ is the standard dot product; $\norm{A}$ is the standard spectral norm for all $A \in \R^{d \times d};$ $g = \cO(f):$ exist $C > 0$ such that $g(z) \leq C \times f(z)$ for all $z \in \mathcal{Z};$ $g = \Omega(f):$ exist $C > 0$ such that $g(z) \geq C \times f(z)$ for all $z \in \mathcal{Z};$ $g = \Theta(f):$ $g = \cO(f)$ and $g = \Omega(f);$ $g = \widetilde{\Theta}(f):$ the same as $g = \Omega(f)$ but up to logarithmic factors; $g \simeq h:$ $g$ and $h$ are equal up to a universal constant.

\section{Preliminary Theoretical Properties}

Before we state our main properties and theorems, we will introduce the $q$--function:

\begin{definition}[$q$--function]
  Let Assumption~\ref{ass:gen_smooth} hold. For all $a \geq 0,$ we define the function $q \,:\, \R_+ \to [0, q_{\max}(a))$ such that
  \begin{align}
    \label{eq:q_func}
    q(s;a) = \int_{0}^{s} \frac{d v}{\ell(a + v)}
  \end{align}
  where $q_{\max}(a) \eqdef \int_{0}^{\infty} \frac{d v}{\ell(a + v)} \in (0, \infty].$
  \label{def:q_function}
\end{definition}
\begin{proposition}
  \label{prop:q}
  The function $q$ is \emph{invertible, differentiable, positive, and strongly increasing}, and the inverse function $q^{-1} \,:\, [0, q_{\max}(a)) \to \R_+$ of $q$ is also \emph{differentiable, positive, and strongly increasing}.
\end{proposition}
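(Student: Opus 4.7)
The plan is to verify each claim directly from the definition \eqref{eq:q_func}, using only the hypothesis in Assumption~\ref{ass:gen_smooth} that $\ell$ is non-decreasing, positive, and locally Lipschitz on $[0, \infty)$. The integrand $v \mapsto 1/\ell(a+v)$ is strictly positive and continuous on $[0, \infty)$ (continuity of $\ell$ follows from local Lipschitzness, and the reciprocal is well-defined since $\ell > 0$). Consequently the integral $q(s;a)$ is well-defined for every $s \in [0, \infty)$, and since $\ell > 0$ we have $q(s;a) < q_{\max}(a)$ for every finite $s$, matching the stated codomain whether $q_{\max}(a)$ is finite or infinite.

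For differentiability and strict monotonicity, I would apply the fundamental theorem of calculus. Continuity of the integrand gives $\partial_s q(s;a) = 1/\ell(a+s)$, which is strictly positive, so $q(\cdot;a)$ is continuously differentiable and strictly increasing. Combined with $q(0;a) = 0$, this yields $q(s;a) > 0$ for every $s > 0$, establishing the positivity claim.

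Invertibility follows from continuity and strict monotonicity: $q(\cdot;a)$ is a homeomorphism from $[0, \infty)$ onto $[0, q_{\max}(a))$. The inverse function theorem then applies, since $q$ is $C^1$ with nowhere-vanishing derivative; this gives $q^{-1}$ differentiable on $[0, q_{\max}(a))$ with
\[
(q^{-1})'(t) \;=\; \frac{1}{\partial_s q\bigl(q^{-1}(t);a\bigr)} \;=\; \ell\bigl(a + q^{-1}(t)\bigr) \;>\; 0.
\]
Strict monotonicity of $q^{-1}$ follows from the positive derivative, and positivity on $(0, q_{\max}(a))$ is immediate from $q^{-1}(0) = 0$ combined with monotonicity.

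I do not anticipate a serious obstacle: the argument is routine once one observes that local Lipschitzness of $\ell$ implies continuity of the integrand, which activates both the fundamental theorem of calculus and the inverse function theorem. The only mild subtlety is handling the two regimes $q_{\max}(a) < \infty$ and $q_{\max}(a) = \infty$ uniformly, but this is built into the definition of the codomain $[0, q_{\max}(a))$ and requires no case analysis.
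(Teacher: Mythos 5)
Your proof is correct and fills in a step the paper treats as routine: the paper gives no separate proof of Proposition~\ref{prop:q}, merely invoking in the proof of Lemma~\ref{lemma:first_alt} that $q$ is increasing and differentiable with $q'(s;\cdot) > 0$ so that $q^{-1}$ inherits the same properties. Your argument via the fundamental theorem of calculus (continuity of the integrand from local Lipschitzness of $\ell$) and the inverse function theorem is exactly the reasoning the paper relies on implicitly, and your remark that positivity holds on the open half-line (since $q(0;a)=0$ and $q^{-1}(0)=0$) is a correct reading of what the proposition must mean.
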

The function $q$ is primarily defined because its inverse, $q^{-1}$, plays a key role in the main results, and $q^{-1}$ does not generally have a (nice) closed-form explicit formula. 

Under $L$--smoothness, it is known that $\norm{\nabla f(y) - \nabla f(x)} \leq L \norm{y - x}$ for all $x, y \in \mathcal{X}.$  In the first lemma of the proof, we will obtain a generalized bound under Assumption~\ref{ass:gen_smooth}:
\begin{lemma}
  For all $x, y \in \mathcal{X}$ such that $\norm{y - x} \in [0, q_{\max}),$ if $f$ is $\ell$--smooth (Assumption~\ref{ass:gen_smooth}), then
  \begin{align}
    \label{eq:gen_smooth_alt}
    \norm{\nabla f(y) - \nabla f(x)} \leq q^{-1}(\norm{y - x};\norm{\nabla f(x)}),
  \end{align}
  where $q$ and $q_{\max} \equiv q_{\max}(\norm{\nabla f(x)})$ are defined in Definition~\ref{def:q_function}.
  \label{lemma:first_alt}
\end{lemma}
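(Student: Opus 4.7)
My plan is to parameterize the segment from $x$ to $y$ and reduce the claim to a one-dimensional nonlinear Gr\"onwall inequality. Set $\phi(t) = x + t(y - x)$ for $t \in [0, 1]$, which stays in $\mathcal{X}$ by convexity. Define $g(t) = \nabla f(\phi(t))$ and $h(t) \eqdef \norm{g(t) - g(0)}$; the target bound is $h(1) \leq q^{-1}(\norm{y - x}; \norm{\nabla f(x)})$. Since $f$ is twice differentiable on $\mathcal{X}$, $g$ is $C^1$ with $g'(s) = \nabla^2 f(\phi(s))(y - x)$.

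From the reverse triangle inequality, $\norm{\nabla f(\phi(s))} \leq \norm{\nabla f(x)} + h(s)$, so Assumption~\ref{ass:gen_smooth} combined with monotonicity of $\ell$ gives $\norm{\nabla^2 f(\phi(s))} \leq \ell(\norm{\nabla f(x)} + h(s))$. Writing $a = \norm{\nabla f(x)}$ and $\delta = \norm{y - x}$, integrating $g'$ on $[0, t]$ and applying the estimate above pointwise yields the Bihari-type integral inequality
\begin{equation*}
  h(t) \;\leq\; \int_{0}^{t} \norm{g'(s)}\, ds \;\leq\; \delta \int_{0}^{t} \ell(a + h(s))\, ds, \qquad t \in [0, 1].
\end{equation*}
To finish, I compare with the scalar ODE $H'(t) = \delta \cdot \ell(a + H(t))$, $H(0) = 0$. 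Separating variables gives $q(H(t); a) = \delta t$, so the explicit solution is $H(t) = q^{-1}(\delta t; a)$, defined for $\delta t < q_{\max}(a)$; the hypothesis $\norm{y - x} < q_{\max}$ guarantees this at $t = 1$. A standard comparison principle (nonlinear Gr\"onwall / Bihari) then gives $h(t) \leq H(t)$ on $[0, 1]$, and evaluating at $t = 1$ produces $h(1) \leq q^{-1}(\delta; a) = q^{-1}(\norm{y - x}; \norm{\nabla f(x)})$, which is the claim.

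The main technical obstacle is the comparison step: $h$ is only Lipschitz (it is the Euclidean norm of a $C^1$ curve), and so can fail to be differentiable wherever $g(t) = g(0)$, and need not be monotone in $t$. The cleanest rigorous route is to invoke Bihari's lemma directly, which needs only the integral inequality together with the positivity and monotonicity of $\ell$ from Proposition~\ref{prop:q}. Equivalently, since $h$ is absolutely continuous, one can show by the chain rule for AC compositions that $t \mapsto q(h(t); a)$ is absolutely continuous with $\frac{d}{dt} q(h(t); a) = h'(t)/\ell(a + h(t)) \leq \delta$ almost everywhere; integrating then yields $q(h(1); a) \leq \delta$, and the increasing inverse $q^{-1}(\cdot; a)$ from Proposition~\ref{prop:q} applied to both sides closes the argument.
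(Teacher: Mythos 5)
Your proof is correct and follows the same high-level template as the paper's argument: parameterize the line segment from $x$ to $y$, bound the growth of $\norm{\nabla f(\phi(t)) - \nabla f(x)}$ by a nonlinear integral inequality driven by $\ell$, and compare against the separable ODE whose explicit solution is $q^{-1}$. The genuine difference lies in how the comparison is made rigorous. You work directly with $h(t) = \norm{\nabla f(\phi(t)) - \nabla f(x)}$, correctly flag that $h$ need not be differentiable or monotone, and close via Bihari's lemma (or equivalently the absolutely-continuous chain-rule argument you sketch). The paper instead sidesteps the nondifferentiability by introducing the larger but \emph{differentiable} surrogate $v(t) = \int_0^t \ell(\norm{\nabla f(x + h\tau)})\, d\tau \geq \norm{\nabla f(x+ht) - \nabla f(x)}$, turning the integral inequality into a genuine one-sided differential inequality $v'(t) \leq \ell(\norm{\nabla f(x)} + v(t))$, and then comparing $v$ with the ODE solution $g$ by applying the \emph{linear} Gr\"onwall inequality to $p = v - g$; that step relies on the local Lipschitz continuity of $\ell$ to write $p' \leq Mp$. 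Your Bihari route is arguably cleaner for this comparison: it needs only monotonicity, positivity, and continuity of $\ell$ (not Lipschitzness), and it avoids the mild sign-case bookkeeping implicit in the paper's Gr\"onwall step. The paper's $v$-trick buys elementarity, invoking only linear Gr\"onwall, and the resulting differentiable $v$ is slightly easier to manipulate. Both are sound and yield the identical conclusion. One cosmetic remark: the bound $\norm{\nabla f(\phi(s))} \leq \norm{\nabla f(x)} + h(s)$ is the ordinary triangle inequality, not the reverse triangle inequality.
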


\begin{figure}[t]
  \begin{minted}[fontsize=\small]{python}
import scipy.integrate as integrate

def find_step_size(ell, norm_grad):
  h = lambda v: 1 / ell(norm_grad * (1 + v))
  return integrate.quad(h, 0, 1)[0]
  \end{minted}
  \vspace{-0.3cm}
  \caption{Python function to compute the step sizes $\{\gamma_k\}$ using SciPy \citep{virtanen2020scipy}.}
  \label{fig:alg}
\end{figure}

Let us consider some examples. $L$--smoothness: if $\ell(s) = L,$ then $q(s;a) = \nicefrac{s}{L}$ and $q^{-1}(z;a) = L z;$ thus $\norm{\nabla f(y) - \nabla f(x)} \leq L \norm{y - x},$ recovering the classical assumption. $(L_0, L_1)$--smoothness: if $\ell(s) = L_0 + L_1 s,$ then $q(s;a) = \nicefrac{\log(L_0 + L_1 s)}{L_1}$ and $q^{-1}(z;a) = (L_0 + L_1 \norm{f(x)}) (\exp(L_1 z) - 1) / L_1,$ recovering the result from \citep{vankov2024optimizing}. However, \eqref{eq:gen_smooth_alt} works with virtually any other choice of $\ell.$ \emph{Remarkably and unexpectedly, it is never necessary to explicitly compute $q^{-1}$ in theory. We will see later that the final results do not depend on $q^{-1}.$} 

\begin{remark}
An important note is that instead of Assumption~\ref{ass:gen_smooth}, we can assume \eqref{eq:gen_smooth_alt} for functions that are not twice differentiable. All the subsequent theory remains valid. However, \eqref{eq:gen_smooth_alt} is arguably less intuitive than \eqref{eq:main_ass}.
\end{remark}

The next step is to generalize the inequality
\begin{align}
  \label{eq:dbqozC}
  \textstyle f(y) \leq f(x) + \inp{\nabla f(x)}{y - x} + \frac{L \norm{y - x}^2}{2}
\end{align}
for all $x ,y \in \mathcal{X},$ which is true for $L$--smooth functions.
\begin{lemma}
  For all $x, y \in \mathcal{X}$ such that $\norm{y - x} \in [0, q_{\max}(\norm{\nabla f(x)})),$ if $f$ is $\ell$--smooth (Assumption~\ref{ass:gen_smooth}), then
  \begin{equation}
  \begin{aligned}
    f(y) &\leq f(x) + \inp{\nabla f(x)}{y - x} \\
    &\quad+ \int_{0}^{\norm{y - x}} q^{-1}(\tau; \norm{\nabla f(x)}) d \tau
    \label{eq:smooth}
  \end{aligned}
  \end{equation}
  where $q$ and $q_{\max}(\norm{\nabla f(x)})$ are defined in Definition~\ref{def:q_function}.
  \label{lemma:second_alt}
\end{lemma}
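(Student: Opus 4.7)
The plan is to mimic the classical derivation of the descent lemma for $L$–smooth functions, replacing the global Lipschitz gradient bound by the generalized bound from Lemma~\ref{lemma:first_alt}. Since $\mathcal{X}$ is open and convex, the entire segment $\{x + t(y-x) : t \in [0,1]\}$ lies in $\mathcal{X}$, so the fundamental theorem of calculus applied to the smooth map $t \mapsto f(x + t(y-x))$ gives
\begin{align*}
f(y) - f(x) - \inp{\nabla f(x)}{y-x} = \int_0^1 \inp{\nabla f(x + t(y-x)) - \nabla f(x)}{y-x}\, dt.
\end{align*}
Applying Cauchy--Schwarz pointwise in $t$, this is bounded by $\|y-x\|\int_0^1 \|\nabla f(x + t(y-x)) - \nabla f(x)\|\, dt$.

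Next, I would invoke Lemma~\ref{lemma:first_alt} for each intermediate point $y_t \eqdef x + t(y-x)$. Since $\|y_t - x\| = t\|y-x\| \leq \|y-x\| < q_{\max}(\|\nabla f(x)\|)$, the hypothesis of the lemma is satisfied for every $t \in [0,1]$, yielding
\begin{align*}
\|\nabla f(y_t) - \nabla f(x)\| \leq q^{-1}\bigl(t\|y-x\|;\, \|\nabla f(x)\|\bigr).
\end{align*}
Substituting this into the previous estimate gives
\begin{align*}
f(y) - f(x) - \inp{\nabla f(x)}{y-x} \leq \|y-x\| \int_0^1 q^{-1}\bigl(t\|y-x\|;\, \|\nabla f(x)\|\bigr)\, dt.
\end{align*}

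The final step is the substitution $\tau = t\|y-x\|$, so $d\tau = \|y-x\|\, dt$, which converts the right-hand side into $\int_0^{\|y-x\|} q^{-1}(\tau;\,\|\nabla f(x)\|)\, d\tau$, matching \eqref{eq:smooth}. The change of variables is justified because $q^{-1}$ is continuous on $[0, q_{\max}(\|\nabla f(x)\|))$ by Proposition~\ref{prop:q}. The main technical point — and the only place where care is required — is verifying that every intermediate point $y_t$ lies in the regime where Lemma~\ref{lemma:first_alt} is applicable; this follows from the convexity of $\mathcal{X}$ together with the monotonicity of $t \mapsto \|y_t - x\|$. No additional estimates are needed, and in particular the inverse function $q^{-1}$ is only used formally, so no closed-form expression is required.
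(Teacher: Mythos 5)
Your proof is correct and takes essentially the same route as the paper's: Taylor expansion with the integral remainder, Cauchy--Schwarz pointwise, Lemma~\ref{lemma:first_alt} applied to each intermediate point $x + t(y-x)$, and the change of variables $\tau = t\norm{y-x}$. The only difference is that you spell out the check that every intermediate point lies in the valid regime for Lemma~\ref{lemma:first_alt}, which the paper leaves implicit.
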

Using the same reasoning as with the previous lemma, this bound generalizes the previous bounds for $L$--smoothness and $(L_0,L_1)$--smoothness.

\subsection{Derivation of the optimal gradient descent rule}

Under $L$--smoothness, it is well-known that $y = x - \frac{1}{L} \nabla f(x)$ is the optimal point that minimizes the upper bound in \eqref{eq:dbqozC}. We now aim to extend this reasoning to \eqref{eq:smooth} and determine the ``right'' \algname{GD} method under Assumption~\ref{ass:gen_smooth}. At first glance, it may seem infeasible to find an explicit formula for the optimal step size using \eqref{eq:smooth}. However, surprisingly, we can derive it:

\begin{corollary}
  \label{cor:main_new}
  For a fixed $x \in \mathcal{X},$ the upper bound 
  \begin{align*}
    f(x) + \inp{\nabla f(x)}{y - x} + \int_{0}^{\norm{y - x}} q^{-1}(\tau; \norm{\nabla f(x)}) d \tau
  \end{align*}
  from \eqref{eq:smooth} is minimized with 
  \begin{align*}
    y^* = x - \int_{0}^{1} \frac{d v}{\ell(\norm{\nabla f(x)} + \norm{\nabla f(x)} v)} \nabla f(x) \in \mathcal{X}.
  \end{align*}
  With the optimal $y^*,$ the upper bound equals
  \begin{align*}
    f(x) - \norm{\nabla f(x)}^2 \int_{0}^{1} \frac{1 - v}{\ell(\norm{\nabla f(x)} + \norm{\nabla f(x)} v)} d v.
  \end{align*}
\end{corollary}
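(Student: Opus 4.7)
The plan is to reduce the minimization to a one-dimensional problem by writing $y = x + t u$ with $\norm{u} = 1$ and $t \in [0, q_{\max}(\norm{\nabla f(x)}))$. With this parameterization, the upper bound becomes
\begin{equation*}
g(t, u) = f(x) + t\, \inp{\nabla f(x)}{u} + \int_{0}^{t} q^{-1}(\tau; \norm{\nabla f(x)})\, d\tau.
\end{equation*}
Since the integral term depends only on $t$, for any fixed $t \geq 0$ the inner product $\inp{\nabla f(x)}{u}$ is minimized, by Cauchy–Schwarz, at $u^\star = -\nabla f(x)/\norm{\nabla f(x)}$ (the case $\nabla f(x) = 0$ is trivial, giving $y^\star = x$). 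Substituting this choice reduces the problem to minimizing
\begin{equation*}
h(t) = f(x) - t\, \norm{\nabla f(x)} + \int_{0}^{t} q^{-1}(\tau; \norm{\nabla f(x)})\, d\tau
\end{equation*}
over $t \in [0, q_{\max}(\norm{\nabla f(x)}))$.

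Next, I would differentiate: $h'(t) = -\norm{\nabla f(x)} + q^{-1}(t; \norm{\nabla f(x)})$, and $h''(t) = (q^{-1})'(t; \norm{\nabla f(x)}) > 0$ by Proposition~\ref{prop:q}, so $h$ is strictly convex and the unique minimizer $t^\star$ is characterized by $q^{-1}(t^\star; \norm{\nabla f(x)}) = \norm{\nabla f(x)}$, i.e., $t^\star = q(\norm{\nabla f(x)}; \norm{\nabla f(x)})$. Using the definition of $q$ and the change of variables $v = \norm{\nabla f(x)} w$,
\begin{equation*}
t^\star = \int_{0}^{\norm{\nabla f(x)}} \frac{dv}{\ell(\norm{\nabla f(x)} + v)} = \norm{\nabla f(x)} \int_{0}^{1} \frac{dw}{\ell(\norm{\nabla f(x)} + \norm{\nabla f(x)} w)}.
\end{equation*}
Then $y^\star = x + t^\star u^\star = x - \gamma^\star \nabla f(x)$ with $\gamma^\star = t^\star / \norm{\nabla f(x)}$ matching the claimed formula.

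For the optimal value, I would evaluate the integral $\int_0^{t^\star} q^{-1}(\tau; \norm{\nabla f(x)}) d\tau$ by the substitution $\tau = q(s; \norm{\nabla f(x)})$, so $d\tau = ds / \ell(\norm{\nabla f(x)} + s)$ and $q^{-1}(\tau) = s$; the integral becomes $\int_0^{\norm{\nabla f(x)}} s / \ell(\norm{\nabla f(x)} + s)\, ds$, and rescaling $s = \norm{\nabla f(x)} v$ gives $\norm{\nabla f(x)}^2 \int_0^1 v / \ell(\norm{\nabla f(x)}(1+v))\, dv$. Combining with $-t^\star \norm{\nabla f(x)} = -\norm{\nabla f(x)}^2 \int_0^1 1/\ell(\norm{\nabla f(x)}(1+v))\, dv$ produces the stated expression with the factor $(1 - v)$.

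The only subtlety, and the step I would be most careful about, is checking that $y^\star \in \mathcal{X}$ so that Lemma~\ref{lemma:second_alt} legitimately applies. This follows because $t^\star = \int_0^{\norm{\nabla f(x)}} dv / \ell(\norm{\nabla f(x)} + v) < \int_0^{\infty} dv / \ell(\norm{\nabla f(x)} + v) = q_{\max}(\norm{\nabla f(x)})$, placing $y^\star$ strictly inside the admissible range of the bound; a short continuity/connectedness argument together with openness of $\mathcal{X}$ (and the fact that the bound in Lemma~\ref{lemma:second_alt} is assumed to hold on the relevant segment) then ensures $y^\star \in \mathcal{X}$.
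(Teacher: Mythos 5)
Your optimization argument is essentially identical to the paper's, which isolates it as Lemma~\ref{lemma:opt_params_v2}: decompose into $y = x + t h$ with $\norm{h}=1$, optimize over $h$ by Cauchy--Schwarz, optimize over $t$ by setting the derivative $-\norm{\nabla f(x)} + q^{-1}(t;\norm{\nabla f(x)})$ to zero (using monotonicity of $q^{-1}$), solve for $t^* = q(\norm{\nabla f(x)};\norm{\nabla f(x)})$, and evaluate the minimum via the substitution $\tau = q(s;\cdot)$. That part is correct and matches the paper step for step, including the change of variables at the end.

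The step you flag as the ``only subtlety'' is where there is a genuine gap, and your proposed resolution would not quite work. You cannot invoke ``the fact that the bound in Lemma~\ref{lemma:second_alt} is assumed to hold on the relevant segment'' --- Lemma~\ref{lemma:second_alt} only applies to points already in $\mathcal{X}$, so using it to conclude that the segment stays in $\mathcal{X}$ is circular. Openness of $\mathcal{X}$ is also not the right tool: openness is local and says nothing about whether a point at distance $t^*$ from $x$ remains in $\mathcal{X}$, no matter how $t^*$ compares to $q_{\max}$. What is actually needed (Lemma~\ref{lemma:aux_2} in the paper) is a contradiction argument that uses \emph{all} of: the characterization $\mathcal{X} = \{x : f(x) < \infty\}$, convexity of $\mathcal{X}$, continuity of $f$ on the closure of $\mathcal{X}$, and the finiteness of the upper bound in \eqref{eq:smooth} along the segment. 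Concretely: suppose $y^* \notin \mathcal{X}$; let $\bar\mu$ be the supremum of $\mu \in [0,1]$ with $x + \mu t^* h^* \in \mathcal{X}$. Lemma~\ref{lemma:second_alt} applies for every $\mu < \bar\mu$ (since those points \emph{are} in $\mathcal{X}$), and because $t^* < q_{\max}$ the upper bound stays finite as $\mu \to \bar\mu$; continuity of $f$ on the closure of $\mathcal{X}$ then gives $f(x + \bar\mu t^* h^*) < \infty$, contradicting $x + \bar\mu t^* h^* \notin \mathcal{X}$. Your observation that $t^* < q_{\max}(\norm{\nabla f(x)})$ is the correct first step, but the rest of that argument needs to be filled in as above rather than appealed to via openness.
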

This corollary follows from Lemma~\ref{lemma:opt_params_v2}. By leveraging this result, we can immediately propose a new \algname{GD} method, detailed in Algorithm~\ref{alg:main}. The algorithm is the standard \algname{GD} method but with the step size $\gamma_k.$ In some cases, such as $L$--smoothness and $(L_0, L_1)$--smoothness, one can easily calculate $\gamma_k.$ Indeed, if $\ell(s) = L,$ then $\gamma_k = 1 / L.$ If $\ell(s) = L_0 + L_1 s,$ then $\gamma_k = \frac{1}{L_1 \norm{\nabla f(x_k)}} \log\left(1 + \frac{L_1 \norm{\nabla f(x_k)}}{L_0 + L_1 \norm{\nabla f(x_k)}}\right),$ getting the same rule as in \citep{vankov2024optimizing}. Our rule of the step size in Algorithm~\ref{alg:main} works with arbitrary $\ell$ function.
If it is not possible explicitly, numerical methods like Simpson's rule can be applied instead or SciPy library \citep{virtanen2020scipy} (see Figure~\ref{fig:alg}).
\begin{remark}
  Since $\ell$ is non-decreasing, we can get the following bounds on the optimal step size: $\frac{1}{\ell(2 \norm{\nabla f(x_k)})} \leq \gamma_k = \int_{0}^{1} \frac{d v}{\ell(\norm{\nabla f(x_k)} + \norm{\nabla f(x_k)} v)} \leq \frac{1}{\ell(\norm{\nabla f(x_k)})}.$
  One can take the step size rule $\bar{\gamma}_k \eqdef \frac{1}{\ell(2 \norm{\nabla f(x_k)})}$ instead of $\gamma_k$ and avoid the integration, though this approach may result in a less tight final result. For a clean and rigorous theory, it is crucial to work with the optimal step size $\gamma_k.$ Notably, in some parts of our proofs, we leverage elegant properties such as $q^{-1}(\gamma_k \norm{\nabla f(x_k)};\norm{\nabla f(x_k)}) = \norm{\nabla f(x_k)},$ particularly in the arguments surrounding \eqref{eq:dJzohbySNgR} and \eqref{eq:upJSfnIazsYdZW}.
  \label{remark:step_size}
  \end{remark}
  Let us take another example and consider $(\rho, L_0, L_1)$--smoothness, $\ell(s) = L_0 + L_1 s^{\rho}$ for any $p \geq 0.$ Then, in view of Remark~\ref{remark:step_size}, we obtain $(L_0 + 2^\rho L_1 \norm{\nabla f(x_k)}^\rho)^{-1} \leq \gamma_k \leq (L_0 + L_1 \norm{\nabla f(x_k)}^\rho)^{-1};$ thus, roughly, $\gamma_k \approx (L_0 + L_1 \norm{\nabla f(x_k)})^{-1} \approx \min\{1 / L_0, 1 / L_1 \norm{\nabla f(x_k)}\}$ if $\rho = 1,$ which coincides with the famous clipping rule \citep{koloskova2023revisiting,gorbunov2024methods,vankov2024optimizing}. However, one can take not only any $\rho \geq 0,$ but also any non-decreasing positive locally Lipchitz function $\ell.$ The step size rule in Algorithm~\ref{alg:main} is universal.

\section{Convergence Theory in Nonconvex Setting}
\label{sec:nonconvex}
In the previous section, we derived the optimal step size rule for \algname{GD} that minimizes the upper bound \eqref{eq:smooth}.  We are now ready to present the convergence guarantees of this method in the nonconvex setting.

\begin{theorem}
  \label{thm:main_theorem}
  Suppose that Assumptions~\ref{ass:gen_smooth} and \ref{ass:lower_bound} hold. Then Algorithm~\ref{alg:main}
  guarantees that
    $f(x_{k+1}) \leq f(x_{k}) - \frac{\gamma_k}{4}\norm{\nabla f(x_{k})}^2$
  for all $k \geq 0,$ and 
  \begin{align}
    \label{eq:main_conv}
    \min\limits_{k \in \{0, \dots, T-1\}} \frac{\norm{\nabla f(x_{k})}^2}{\ell(2 \norm{\nabla f(x_k)})} \leq \frac{4 \Delta}{T}
  \end{align}
  for all $T \geq 1.$
\end{theorem}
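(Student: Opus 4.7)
The plan is to instantiate Corollary~\ref{cor:main_new} at $x = x_k$, so that by construction $y^* = x_{k+1}$ is exactly the \algname{GD} iterate, and read off the per‑step decrease
\[
f(x_{k+1}) \leq f(x_k) - \norm{\nabla f(x_k)}^2 \int_{0}^{1} \frac{1-v}{\ell(\norm{\nabla f(x_k)} + \norm{\nabla f(x_k)} v)}\, dv.
\]
Before invoking the corollary I will briefly check the feasibility requirement of Lemma~\ref{lemma:second_alt}, namely $\norm{x_{k+1} - x_k} < q_{\max}(\norm{\nabla f(x_k)})$. Setting $g_k \eqdef \norm{\nabla f(x_k)}$ and changing variables $u = g_k v$, one has $\gamma_k g_k = \int_0^{g_k} \frac{du}{\ell(g_k + u)} < \int_0^\infty \frac{du}{\ell(g_k + u)} = q_{\max}(g_k)$, so the bound applies.

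The key analytic step is to show that the weighted integral beats the unweighted one by a constant:
\[
\int_{0}^{1} \frac{(1-v)\, dv}{\ell(g_k + g_k v)} \;\geq\; \frac{1}{4}\int_{0}^{1} \frac{dv}{\ell(g_k + g_k v)} \;=\; \frac{\gamma_k}{4}.
\]
Let $h(v) \eqdef 1/\ell(g_k + g_k v)$; since $\ell$ is non‑decreasing, $h$ is non‑increasing. Splitting at $v = 1/2$, I use $(1-v) \geq 1/2$ on $[0,1/2]$ to get $\int_0^1 (1-v) h(v)\, dv \geq \tfrac{1}{2}\int_0^{1/2} h(v)\, dv$, and monotonicity to get $\int_0^{1/2} h(v)\, dv \geq \tfrac{1}{2}\int_0^1 h(v)\, dv$. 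Combining gives the factor $1/4$. This delivers the claimed descent inequality
\[
f(x_{k+1}) \leq f(x_k) - \tfrac{\gamma_k}{4}\, g_k^2.
\]

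To translate this into the stated $\min_k$ bound, I lower bound $\gamma_k$ using monotonicity of $\ell$ (the first inequality in Remark~\ref{remark:step_size}): $\gamma_k \geq 1/\ell(2 g_k)$, hence
\[
\frac{g_k^2}{4\,\ell(2 g_k)} \;\leq\; \tfrac{\gamma_k}{4}\, g_k^2 \;\leq\; f(x_k) - f(x_{k+1}).
\]
Summing from $k = 0$ to $T-1$, telescoping, and using Assumption~\ref{ass:lower_bound} to bound $f(x_0) - f(x_T) \leq \Delta$, I obtain $\sum_{k=0}^{T-1} \frac{g_k^2}{\ell(2 g_k)} \leq 4\Delta$, and the min‑bound \eqref{eq:main_conv} follows by the pigeonhole inequality $T \min_k (\cdot) \leq \sum_k (\cdot)$.

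The only nontrivial obstacle is the constant $1/4$: showing the weighted integral is a definite fraction of the unweighted one, uniformly in $\ell$, which is exactly where the monotonicity of $\ell$ (and therefore of $h$) is used. The rest is bookkeeping: a feasibility check for Lemma~\ref{lemma:second_alt}, the uniform lower bound on $\gamma_k$ from Remark~\ref{remark:step_size}, and standard telescoping.
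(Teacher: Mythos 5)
Your proposal is correct and follows essentially the same route as the paper: apply Corollary~\ref{cor:main_new} at $x=x_k$, bound the weighted integral $\int_0^1 \frac{(1-v)\,dv}{\ell(g_k+g_kv)}$ below by $\frac{1}{4}\int_0^1 \frac{dv}{\ell(g_k+g_kv)}=\frac{\gamma_k}{4}$ via the split at $v=1/2$ and monotonicity of $\ell$, then telescope and invoke Remark~\ref{remark:step_size} together with Assumption~\ref{ass:lower_bound}. Your explicit feasibility check that $\gamma_k\norm{\nabla f(x_k)}<q_{\max}(\norm{\nabla f(x_k)})$ is a nice self-contained substitute for the paper's appeal to Lemma~\ref{lemma:aux_2} inside Corollary~\ref{cor:main_new}.
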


Theorem~\ref{thm:main_theorem} is our main result for nonconvex functions. Evidently, \eqref{eq:main_conv} does not provide the convergence rate of $\min_{k \in \{0, \dots, T-1\}} \norm{\nabla f(x_{k})}^2.$ However, if the function $\psi_2(x) \eqdef \frac{x^2}{\ell(2 x)}$ is strictly increasing, we finally obtain the rate:

\begin{corollary}
  \label{cor:conv_nonconvex_incr}
  In view of Theorem~\ref{thm:main_theorem} and assuming that the function $\psi_2(x) \eqdef \frac{x^2}{\ell(2 x)}$ is strictly increasing, we get
  \begin{align}
    \label{eq:efTIMLnckX}
    \min_{k \in \{0, \dots, T-1\}} \norm{\nabla f(x_{k})} \leq \psi_2^{-1} \left(\frac{8 \Delta}{T}\right)
  \end{align}
  for Algorithm~\ref{alg:main} and for all $T \geq 1$ such that\footnote{if $\psi_2(\infty) = \infty,$ then the corollary is true for all $T \geq 1.$} $\nicefrac{8 \Delta}{T} \in \textnormal{im}(\psi_2).$
\end{corollary}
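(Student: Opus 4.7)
The plan is to invert the bound in Theorem~\ref{thm:main_theorem} using the strict monotonicity of $\psi_2$. Write $\psi_2(x) \eqdef x^2/\ell(2x)$, so that the left-hand side of \eqref{eq:main_conv} is exactly $\min_{k} \psi_2(\norm{\nabla f(x_k)})$. Since $\psi_2$ is strictly increasing, it commutes with the minimum over a finite set: $\min_k \psi_2(\norm{\nabla f(x_k)}) = \psi_2\bigl(\min_k \norm{\nabla f(x_k)}\bigr)$. Hence Theorem~\ref{thm:main_theorem} gives
\begin{equation*}
\psi_2\bigl(\min_{k \in \{0,\dots,T-1\}} \norm{\nabla f(x_k)}\bigr) \leq \frac{4\Delta}{T}.
\end{equation*}

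Next I would justify that $\psi_2^{-1}$ can be applied to the right-hand side. Because $\ell$ is positive and locally Lipschitz (Assumption~\ref{ass:gen_smooth}), $\psi_2$ is continuous on $[0, \infty)$ with $\psi_2(0) = 0$; combined with strict monotonicity this makes $\psi_2$ a homeomorphism of $[0, \infty)$ onto $\textnormal{im}(\psi_2) = [0, \psi_2(\infty))$, and its inverse $\psi_2^{-1}$ is continuous and strictly increasing. The hypothesis $8\Delta/T \in \textnormal{im}(\psi_2)$ guarantees $\psi_2^{-1}(8\Delta/T)$ is defined, and since $4\Delta/T \leq 8\Delta/T$ and $\textnormal{im}(\psi_2)$ is an interval containing $0$, the value $4\Delta/T$ lies in $\textnormal{im}(\psi_2)$ as well. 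Applying the increasing function $\psi_2^{-1}$ to the displayed inequality and using $\psi_2^{-1}(4\Delta/T) \leq \psi_2^{-1}(8\Delta/T)$ yields \eqref{eq:efTIMLnckX}. The footnote case $\psi_2(\infty) = \infty$ follows because then $\textnormal{im}(\psi_2) = [0,\infty)$ contains $8\Delta/T$ automatically for every $T \geq 1$.

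There is essentially no hard step here; the only thing to be careful about is the domain/image bookkeeping for $\psi_2^{-1}$, which is handled by the $\textnormal{im}(\psi_2)$ hypothesis. The factor $8\Delta/T$ rather than the sharper $4\Delta/T$ is purely a convenience to make the image condition easy to verify, and costs nothing since $\psi_2^{-1}$ is monotone.
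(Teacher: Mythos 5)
Your proof is correct and is essentially the (unstated) argument the paper has in mind: the left-hand side of \eqref{eq:main_conv} is $\min_k \psi_2(\norm{\nabla f(x_k)})$, strict monotonicity lets you pull $\psi_2$ outside the finite minimum, and then you apply the increasing inverse. The bookkeeping about $\textnormal{im}(\psi_2)$ is the only nontrivial point and you handle it correctly; one small simplification is that you never actually need $4\Delta/T \in \textnormal{im}(\psi_2)$ — from $\psi_2(\min_k \norm{\nabla f(x_k)}) \leq 4\Delta/T \leq 8\Delta/T = \psi_2(\psi_2^{-1}(8\Delta/T))$ you can conclude directly by monotonicity of $\psi_2$, without taking the detour through $\psi_2^{-1}(4\Delta/T)$. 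You are also right that your argument in fact proves the sharper bound $\psi_2^{-1}(4\Delta/T)$; indeed, the paper's subsequent examples for $L$-smoothness and $(L_0,L_1)$-smoothness are consistent with the $4\Delta/T$ version, so the factor $8$ in the corollary statement is a harmless over-relaxation.
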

If $\psi_2$ is invertible, there is a straightforward strategy to determine an explicit convergence rate: derive $\psi_2^{-1}$ and apply Corollary~\ref{cor:conv_nonconvex_incr}. Below are some illustrative examples:
\subsection{$L$--smoothness}
We start with the simplest and straightforward example: $\ell(s) = L,$ indicating that $f$ is $L$--smooth. In this case, we can use Corollary~\ref{cor:conv_nonconvex_incr} with $\psi_2(x) = \frac{x^2}{L}$ and get
\begin{align*}
  \min_{k \in \{0, \dots, T-1\}} \norm{\nabla f(x_{k})} \leq \sqrt{\frac{4 L \Delta}{T}},
\end{align*}
which is a classical and optimal result  \citep{carmon2020lower} (up to a constant factor).
\subsection{$(L_0, L_1)$--smoothness}
For $(L_0, L_1)$--smoothness, we should take $\ell(s) = L_0 + L_1 s,$ use Corollary~\ref{cor:conv_nonconvex_incr}, and get
\begin{align*}
  \min_{k \in \{0, \dots, T-1\}} \norm{\nabla f(x_{k})} \leq \frac{8 L_1 \Delta}{T} + \sqrt{\frac{4 L_0 \Delta}{T}}
\end{align*}
since $\psi_2^{-1}(z) = L_1 z + \sqrt{L_1^2 z^2 + L_0 z} \leq 2 L_1 z + \sqrt{L_0 z}.$ This rate coincides with the results from \citep{vankov2024optimizing}.

\subsection{$(\rho, L_0, L_1)$--smoothness with $0 \leq \rho \leq 2$}
\label{sec:nc_rho_small}
As far as we know, this result is new. Applying Theorem~\ref{thm:main_theorem} with $\ell(s) = \ell(s) = L_0 + L_1 s^{\rho},$ we obtain
\begin{align}
  \label{eq:CETSKxZMOPZSFrj}
  \min_{k \in \{0, \dots, T-1\}} \frac{\norm{\nabla f(x_{k})}^2}{L_0 + 2^\rho L_1 \norm{\nabla f(x_k)}^{\rho}} \leq \frac{4 \Delta}{T}.
\end{align}
Using the same reasoning as in the previous sections (see details in Section~\ref{sec:der1}), one can show that $\min_{k \in \{0, \dots, T-1\}} \norm{\nabla f(x_{k})}^2 \leq \varepsilon$ after at most 
\begin{align}
  \label{eq:rho_less_2}
  \max\left\{\frac{8 L_0 \Delta}{\varepsilon}, \frac{32 L_1 \Delta}{\varepsilon^{(2 - \rho) / 2}}\right\}
\end{align}
iterations. For $(2, L_0, L_1)$--smoothness, we get $\max\left\{\frac{8 L_0 \Delta}{\varepsilon}, 32 L_1 \Delta\right\}.$ In contrast, the previous work \citep{li2024convex} on $\ell$--smoothness does not guarantee any convergence if $\rho = 2$ for their variant of \algname{GD}. Unlike \citep{li2024convex}, we use non-constant and adaptive step sizes, which enables us to get better convergence guarantees.

\section{Superquadratic Growth of $\ell$-Function}
\label{sec:nonconvex_super}
Corollary~\ref{cor:conv_nonconvex_incr} works only if $\psi_2(x) = \frac{x^2}{\ell(2 x)}$ is strictly increasing. If $\ell(2 x)$ grows too quickly, Corollary~\ref{cor:conv_nonconvex_incr} cannot be applied. However, using Theorem~\ref{thm:main_theorem}, we can still get convergence guarantees, though with the additional assumption that the gradients are bounded:
\begin{assumption}
  [\emph{we assume it only in this section with superquadratic growth of $\ell$-function and nonconvex setting}]
  \label{ass:bounded_grad}
  A function $f\,:\,\R^d \to \R \cup \{\infty\}$ has bounded gradients for some $M \geq 0:$
    $\norm{\nabla f(x)} \leq M$
  for all $x \in \mathcal{X}.$
\end{assumption}
In general, $\psi_2$ can behave in a highly non-trivial manner. Therefore, each possible $\ell$ function needs to be analyzed individually. If $\ell(s) = L_0 + L_1 s^{\rho}$ for $\rho > 2$ or $\ell(s) = L_0 + L_1 s^2 e^s,$ then function $\psi_2$ first increases and then starts decreasing, and we can apply the following analysis.
\subsection{Exponential growth of $\ell$}
\label{sec:exp}
Let us consider an example when $\ell$ grows exponentially\footnote{We multiply the exponent by $s^2$ for convenience}: $\ell(s) = L_0 + L_1 s^2 e^s.$ 
Then
  $\psi_2(x) = \frac{x^2}{L_0 + 4 L_1 x^2 e^{2x}} \geq \frac{1}{2} \min\left\{\frac{x^2}{L_0}, \frac{1}{4 L_1 e^{2x}}\right\}.$
Theorem~\ref{thm:main_theorem} ensures that
  $\min\limits_{k \in \{0, \dots, T-1\}} \min\left\{\frac{\norm{\nabla f(x_{k})}^2}{L_0}, \frac{1}{4 L_1 e^{2 \norm{\nabla f(x_{k})}}}\right\} \leq \frac{8 \Delta}{T}.$
Thus, either 
  $\min\limits_{k \in \{0, \dots, T-1\}} \norm{\nabla f(x_{k})} \leq \sqrt{\frac{8 L_0 \Delta}{T}}$ 
or
  $\max\limits_{k \in \{0, \dots, T-1\}} \norm{\nabla f(x_{k})}^2 \geq \frac{1}{2} \log \frac{T}{32 L_1 \Delta}.$
Since the gradients are bounded by $M,$ we can conclude that the method finds an $\varepsilon$--stationary point after 
\begin{align}
  \label{eq:IONqAMteoWUiuccECle}
  T = \max\left\{\frac{8 L_0 \Delta}{\varepsilon}, 32 L_1 \Delta e^{2 M}\right\}
\end{align}
iterations.
\begin{remark}
  One can notice that if the gradients are bounded, then Algorithm~\ref{alg:main} is not necessary, since $\norm{\nabla^2 f(x)} \leq \ell(M).$
  In this case, it is sufficient to use the classical \algname{GD} theory with the step size $\gamma = \frac{1}{\ell(M)} = \frac{1}{L_0 + L_1 M^2 e^M}.$ However, one would get the iteration complexity $\cO\left(\max\left\{\frac{L_0 \Delta}{\varepsilon}, \frac{L_1 \Delta M^2 e^M}{\varepsilon}\right\}\right),$ which is worse than \eqref{eq:IONqAMteoWUiuccECle} up to the constant factors, and depends on $\varepsilon$ in the second term. Thus, our new step size rule provably helps even if $\ell$ grows quickly.
\end{remark}

\subsection{$(\rho, L_0, L_1)$--smoothness with $\rho > 2$}
\label{sec:nonconvex_rho}
Similarly, we can show that the method finds an $\varepsilon$--stationary after
\begin{align*}
  \max\left\{\frac{8 L_0 \Delta}{\varepsilon}, 64 L_1 \Delta (2 M)^{\rho - 2}\right\}
\end{align*} steps with $(\rho, L_0, L_1)$--smoothness and $\rho > 2$ (see Sec.~\ref{sec:der2}).

\section{Convergence Theory in Convex Setting}
We now analyze how Alg.~\ref{alg:main} works with convex problems, and use the following standard assumption.
\begin{assumption}
  \label{ass:convex}
  A function $f\,:\,\R^d \to \R \cup \{\infty\}$ is \emph{convex} and attains the minimum at a (non-unique) $x_* \in \R^d.$ We define $R \eqdef \norm{x_0 - x_*},$ where $x^0$ is a starting point of numerical methods.
\end{assumption}
Under generalized $\ell$--smoothness, we can prove a convergence rate for convex functions:
\begin{theorem}
  \label{thm:convex_increasing}
  Suppose that Assumptions~\ref{ass:gen_smooth} and \ref{ass:convex} hold. Additionally, the function $\psi_2(x) = \frac{x^2}{\ell(2 x)}$ is strictly increasing and $\psi_2(\infty) = \infty.$ Then Algorithm~\ref{alg:main}
  guarantees that
  \begin{align*}
    \min_{k \in \{0, \dots, T\}}\frac{f(x_{k}) - f(x_*)}{\ell\left(\frac{2 \sqrt{T + 1} (f(x_{k}) - f(x_*))}{\norm{x_{0} - x_*}}\right)} \leq \frac{\norm{x_{0} - x_*}^2}{T + 1}.
  \end{align*}
\end{theorem}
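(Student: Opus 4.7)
The plan is to follow the classical smooth-convex GD analysis, lifted to the $\ell$-smooth setting via the auxiliary function $\psi_2$. Let me write $\delta_k \eqdef f(x_k) - f(x_*)$ and $g_k \eqdef \|\nabla f(x_k)\|$.

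First, I would apply Corollary~\ref{cor:main_new} with $y = x_{k+1}$ and, via a symmetry argument about $v = \tfrac{1}{2}$, show that $\int_0^1 \tfrac{1-v}{\ell(g_k(1+v))}\,dv \geq \gamma_k/2$; this sharpens the descent from Theorem~\ref{thm:main_theorem} to $\delta_{k+1} \leq \delta_k - \gamma_k g_k^2/2$. Combined with $\delta_{k+1} \geq 0$, this yields $\gamma_k g_k^2 \leq 2\delta_k$. Expanding $\|x_{k+1} - x_*\|^2 = \|x_k - x_*\|^2 - 2\gamma_k \langle \nabla f(x_k), x_k - x_*\rangle + \gamma_k^2 g_k^2$, using convexity $\langle \nabla f(x_k), x_k - x_*\rangle \geq \delta_k$, and substituting $\gamma_k g_k^2 \leq 2\delta_k$ delivers the monotonicity $\|x_{k+1} - x_*\| \leq \|x_k - x_*\| \leq R$ for all $k$. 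Invoking convexity one more time gives the key bound $g_k \geq \delta_k/R$.

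Second, since $\psi_2$ is strictly increasing and $g_k \geq \delta_k/R$, the descent becomes
\[
  \delta_k - \delta_{k+1} \;\geq\; \tfrac{1}{2}\psi_2(g_k) \;\geq\; \tfrac{1}{2}\psi_2(\delta_k/R) \;=\; \frac{\delta_k^2}{2 R^2\,\ell(2\delta_k/R)}.
\]
Dividing by $\delta_k\delta_{k+1}$ (and using $\delta_{k+1} \leq \delta_k$) telescopes to $\tfrac{1}{\delta_{T+1}} \geq \sum_{k=0}^{T} \tfrac{1}{2R^2\,\ell(2\delta_k/R)}$, which is the classical Nesterov-style recursion generalized to $\ell$-smooth functions.

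Third, the $\sqrt{T+1}$ factor inside $\ell$ should emerge from a dichotomy on the minimizing index $k^*$. If $g_{k^*} \geq \sqrt{T+1}\,\delta_{k^*}/R$, then the universal inequality $\psi_2(g_k) \leq 2\delta_k$ (immediate from $\delta_{k+1} \geq 0$ in step one) together with monotonicity of $\psi_2$ yields $\psi_2(\sqrt{T+1}\delta_{k^*}/R) \leq 2\delta_{k^*}$, which is precisely the claim up to constants. In the complementary regime where $g_k < \sqrt{T+1}\delta_k/R$ for every $k$, one substitutes $\ell(2\delta_k/R) \leq \ell(2\sqrt{T+1}\delta_k/R)$ inside the telescoped lower bound above, and after taking the min over $k \in \{0,\dots,T\}$ (equivalently, the pigeonhole index where the $T+1$ terms of the sum is dominated) the claim reduces to $\psi_2(\sqrt{T+1}\delta_{k^*}/R) \leq \delta_{k^*}$.

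The main obstacle is matching the constants in the two regimes of the dichotomy to land precisely on the stated $R^2/(T+1)$ on the right. I expect this will require working directly with the tighter descent $\delta_k - \delta_{k+1} \geq g_k^2\int_0^1 \tfrac{1-v}{\ell(g_k(1+v))}\,dv$ rather than its $\gamma_k/2$ relaxation, together with the elegant identity $q^{-1}(\gamma_k g_k;g_k) = g_k$ highlighted in Remark~\ref{remark:step_size}, and invoking $\psi_2(\infty)=\infty$ to guarantee that the minimizing index $k^*$ with the required scaling is realized within $\{0,\dots,T\}$.
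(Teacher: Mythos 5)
Your route is genuinely different from the paper's. The paper never telescopes $1/\delta_k$; instead it uses Lemma~\ref{lemma:descent_lemma} (a quantitative decrease of $\|x_k - x_*\|^2$ by $\tfrac{2\delta_k}{\ell(2 g_k)}$ each step, derived from the sharper convexity inequality~\eqref{eq:gen_1}), combines it with $\psi_2(g_k) \leq 4\delta_k$ from the nonconvex descent, substitutes $\ell(2 g_k) \leq \ell(2\psi_2^{-1}(4\delta_k))$ by monotonicity, uses the identity $\psi_2(\psi_2^{-1}(4\delta_k)) = 4\delta_k$ to simplify, sums, takes a min, and then algebraically unwinds $\psi_2^{-1}$. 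No dichotomy, no Nesterov recursion.

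There is a genuine gap in your second regime. You establish only the qualitative monotonicity $\|x_{k+1}-x_*\| \leq \|x_k-x_*\|$: plugging $\gamma_k g_k^2 \leq 2\delta_k$ into the expansion of $\|x_{k+1}-x_*\|^2$ cancels exactly, leaving no surplus. Consequently your distance argument cannot be summed to give a per-$k$ bound of the form $\frac{\delta_k}{\ell(2 g_k)} \leq \frac{1}{2}\left(\|x_k-x_*\|^2 - \|x_{k+1}-x_*\|^2\right)$, which is what one actually needs. The telescoped recursion $\tfrac{1}{\delta_{T+1}} \geq \sum_{k=0}^{T}\tfrac{1}{2R^2\ell(2\delta_k/R)}$ is an \emph{upper} bound on the sum; pigeonhole then produces an index $k'$ with $\ell(2\delta_{k'}/R) \geq \tfrac{(T+1)\delta_{T+1}}{2R^2}$, from which $\frac{\delta_{k'}}{\ell(2\sqrt{T+1}\delta_{k'}/R)} \leq \frac{2R^2 \delta_{k'}}{(T+1)\delta_{T+1}}$, but $\delta_{k'} \geq \delta_{T+1}$ so this is at best $\geq \tfrac{2R^2}{T+1}$ and never yields the claimed upper bound. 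Substituting $\ell(2\delta_k/R) \leq \ell(2\sqrt{T+1}\delta_k/R)$ only weakens the lower bound on the sum and does not help. To repair this case you would need the quantitative distance decrease from Lemma~\ref{lemma:descent_lemma} (itself requiring the strong convexity-type inequality~\eqref{eq:gen_1} of Lemma~\ref{lemma:smooth_convex}, not just plain convexity): summing it gives $\min_k \frac{\delta_k}{\ell(2 g_k)} \leq \frac{R^2}{2(T+1)}$, and then $\ell(2 g_k) \leq \ell(2\sqrt{T+1}\delta_k/R)$ closes the second regime. Two smaller issues: your dichotomy is stated inconsistently (conditioning on the argmin $k^*$ versus ``for every $k$'' are not complementary; the clean split is ``there exists $k$ with $g_k \geq \sqrt{T+1}\delta_k/R$'' versus its negation), and your first regime loses a factor of $2$ (you only have $\psi_2(g_k) \leq 2\delta_k$), which you correctly flag as unresolved.
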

As with Theorem~\ref{thm:main_theorem}, this theorem provides an implicit convergence rate of $f(x_{T}) - f(x_*).$ Moreover, the theorem offers guarantees only if $\psi_2(x) = \frac{x^2}{\ell(2 x)}$ is strictly increasing and $\psi_2(\infty) = \infty.$ We now consider some examples.

\subsection{$L$--smoothness} If $\ell(s) = L,$ then $f(x_{T}) - f(x_*) \leq \frac{L \norm{x_{0} - x_*}^2}{T + 1}$ because $\{f(x_k)\}$ is a decreasing sequence (Theorem~\ref{thm:main_theorem}). This is the classical rate of \algname{GD} \citep{nesterov2018lectures}.

\subsection{$(\rho, L_0, L_1)$--smoothness with $0 < \rho < 2$}
\label{sec:convex_rho_smooth}
In Section~\ref{sec:der3}, we show that the method finds $\varepsilon$--solution after at most
\begin{align*}
  \max\left\{\frac{2 L_0 R^2}{\varepsilon}, \frac{4 L_1^{2 / (2 - \rho)} R^2}{\varepsilon^{2 (1 - \rho) / (2 - \rho)}}\right\}
\end{align*}
iterations for $\rho \leq 1.$ And after at most
\begin{align}
  \label{eq:vRygeHvEPPSvUeypZdKD}
  \max\left\{\frac{2 L_0 R^2}{\varepsilon}, 16 L_1^{\frac{2}{2 - \rho}}R^{2} \Delta^{\frac{2 (\rho - 1)}{2 - \rho}}\right\}
\end{align}
iterations for $1 < \rho < 2.$
\section{Alternative Convergence Theory in Convex Setting}
\label{sec:alt_convex}

The main disadvantage of Theorem~\ref{thm:convex_increasing} is that it works only if $\psi_2(x) = \frac{x^2}{\ell(2 x)}$ is strictly increasing and $\psi_2(\infty) = \infty.$ To address this limitation, we introduce a new proof technique for \algname{GD} from Algorithm~\ref{alg:main}, which not only works in cases of superquadratic growth of $\ell$ but also guarantees better convergence rates than Theorem~\ref{thm:convex_increasing} in certain practical regimes.
\begin{theorem}
  \label{thm:convex}
  Suppose that Assumptions~\ref{ass:gen_smooth} and \ref{ass:convex} hold. Then Algorithm~\ref{alg:main}
  guarantees $f(x_{T}) - f(x_*) \leq \varepsilon$ after 
  \begin{align}
    \label{eq:YXCruXFYWGMYZBotzm_new}
    \inf_{M > 0}\left[\bar{T}(M) + \frac{\ell(2 M) \norm{x_{0} - x_*}^2}{2 \varepsilon}\right]
  \end{align}
  iterations, where $\bar{T}(M)$ is the number of iterations required to obtain $\norm{\nabla f(x_{\bar{T}(M)})} \leq M.$
\end{theorem}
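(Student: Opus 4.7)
The plan is to combine the sharpened descent from Corollary~\ref{cor:main_new} with a standard convex potential argument, then convert a lower bound on $\sum_k \gamma_k$ into an iteration count through the definition of $\bar{T}(M)$. I write the descent coefficient as $\beta_k \eqdef \int_0^1 \frac{(1-v)\, dv}{\ell(\norm{\nabla f(x_k)}(1+v))}$, so that Corollary~\ref{cor:main_new} gives $f(x_{k+1}) \leq f(x_k) - \beta_k \sqnorm{\nabla f(x_k)}$, while the step size of Algorithm~\ref{alg:main} is $\gamma_k = \int_0^1 \frac{dv}{\ell(\norm{\nabla f(x_k)}(1+v))}$.

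The first step I would carry out is the key auxiliary inequality $\gamma_k \leq 2 \beta_k$. Writing $2\beta_k - \gamma_k = \int_0^1 \frac{(1-2v)\, dv}{\ell(\norm{\nabla f(x_k)}(1+v))}$ and folding around $v = 1/2$ via the substitution $v \mapsto 1 - v$, the integrand collapses to $(1 - 2v)\bigl(g(v) - g(1-v)\bigr)$ on $[0, 1/2]$, where $g(v) \eqdef 1/\ell(\norm{\nabla f(x_k)}(1+v))$ is non-increasing because $\ell$ is non-decreasing. Both factors are non-negative, which proves $\gamma_k \leq 2 \beta_k$.

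Next, I translate this into a standard convex potential inequality. Expanding $\sqnorm{x_{k+1} - x_*} = \sqnorm{x_k - x_*} - 2\gamma_k \inp{\nabla f(x_k)}{x_k - x_*} + \gamma_k^2 \sqnorm{\nabla f(x_k)}$, applying convexity $\inp{\nabla f(x_k)}{x_k - x_*} \geq f(x_k) - f(x_*)$, and chaining $\gamma_k^2 \sqnorm{\nabla f(x_k)} \leq 2\gamma_k \beta_k \sqnorm{\nabla f(x_k)} \leq 2\gamma_k(f(x_k) - f(x_{k+1}))$, I obtain $\sqnorm{x_{k+1} - x_*} \leq \sqnorm{x_k - x_*} - 2\gamma_k(f(x_{k+1}) - f(x_*))$. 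This simultaneously yields distance monotonicity $\norm{x_k - x_*} \leq R$ and, after telescoping from $k=0$ to $T-1$ and using that $\{f(x_k)\}$ is non-increasing (Theorem~\ref{thm:main_theorem}), the bound $f(x_T) - f(x_*) \leq \frac{R^2}{2\sum_{k=0}^{T-1}\gamma_k}$.

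The last step is lower bounding $\sum_{k=0}^{T-1}\gamma_k$. Since $\ell$ is non-decreasing, $\gamma_k \geq 1/\ell(2\norm{\nabla f(x_k)})$, so whenever $\norm{\nabla f(x_k)} \leq M$ one has $\gamma_k \geq 1/\ell(2M)$. If this holds at every $k \geq \bar{T}(M)$, then $\sum_{k=0}^{T-1}\gamma_k \geq (T - \bar{T}(M))/\ell(2M)$, giving $f(x_T) - f(x_*) \leq \varepsilon$ once $T \geq \bar{T}(M) + \ell(2M) R^2/(2\varepsilon)$; infimizing over $M$ then yields \eqref{eq:YXCruXFYWGMYZBotzm_new}. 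The main obstacle is justifying that $\norm{\nabla f(x_k)}$ stays below $M$ for every $k \geq \bar{T}(M)$: this is the $\ell$-smooth analog of the classical gradient monotonicity along convex $L$-smooth \algname{GD} (which uses co-coercivity and does not transfer verbatim to the $\ell$-smooth regime). I would attempt this by induction on $k$, combining distance monotonicity with Lemma~\ref{lemma:first_alt} applied to the optimal step (which yields $\norm{\nabla f(x_{k+1}) - \nabla f(x_k)} \leq \norm{\nabla f(x_k)}$) and the convex identity $\inp{\nabla f(x_{k+1}) - \nabla f(x_k)}{\nabla f(x_k)} \leq 0$ to rule out gradient blow-up once the iterates enter the small-gradient region.
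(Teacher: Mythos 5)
Your overall architecture mirrors the paper's: establish a distance-potential inequality along \algname{GD}, invoke gradient monotonicity to bound $\ell(2\norm{\nabla f(x_k)})$ by $\ell(2M)$ once $k \geq \bar{T}(M)$, telescope, and infimize over $M$. Your route to the potential inequality is actually more elementary than the paper's. You derive $\sqnorm{x_{k+1} - x_*} \leq \sqnorm{x_k - x_*} - 2\gamma_k\bigl(f(x_{k+1}) - f(x_*)\bigr)$ from nothing more than convexity, the descent bound of Corollary~\ref{cor:main_new}, and the nice folding argument showing $\gamma_k \leq 2\beta_k$ (which is correct and is in fact a sharper version of the $\gamma_k \leq 4\beta_k$ used in the proof of Theorem~\ref{thm:main_theorem}). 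The paper instead routes through Lemma~\ref{lemma:smooth_convex}, the $\ell$-smooth Bregman/co-coercivity bounds \eqref{eq:gen_1}--\eqref{eq:gen_2}, to get Lemma~\ref{lemma:descent_lemma}; both arrive at essentially the same telescoping bound. Your telescope and the final bound $f(x_T) - f(x_*) \leq \frac{R^2}{2\sum_{k=0}^{T-1}\gamma_k}$ are fine.

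The genuine gap is the gradient-monotonicity step, and your sketch for it would not close. The two ingredients you propose, $\norm{\nabla f(x_{k+1}) - \nabla f(x_k)} \leq \norm{\nabla f(x_k)}$ from Lemma~\ref{lemma:first_alt} at the optimal step and the convexity bound $\inp{\nabla f(x_{k+1}) - \nabla f(x_k)}{\nabla f(x_k)} \leq 0$, only give
\begin{align*}
  \norm{\nabla f(x_{k+1})}^2 = \norm{\nabla f(x_k)}^2 + 2\inp{\nabla f(x_k)}{\nabla f(x_{k+1}) - \nabla f(x_k)} + \norm{\nabla f(x_{k+1}) - \nabla f(x_k)}^2 \leq 2\norm{\nabla f(x_k)}^2,
\end{align*}
which is not monotonicity. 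You correctly flag that classical co-coercivity does not transfer verbatim, but it does have an $\ell$-smooth analog: the paper's Theorem~\ref{thm:decreas_norm} is proved via the two-sided co-coercivity-type inequality \eqref{eq:gen_2} from Lemma~\ref{lemma:smooth_convex}, which replaces the convexity lower bound $\inp{\nabla f(x_{k+1}) - \nabla f(x_k)}{x_{k+1} - x_k} \geq 0$ by a quantitatively positive lower bound of the form $\norm{\nabla f(x_{k+1}) - \nabla f(x_k)}^2 \int_0^1 \frac{1-v}{\ell(\cdot)}\,dv$. Plugging this into the same expansion, and comparing that integral against $\gamma_k$ (again via the $v \mapsto 1-v$ folding and the identity $q^{-1}(\gamma_k\norm{\nabla f(x_k)}; \norm{\nabla f(x_k)}) = \norm{\nabla f(x_k)}$), is what makes the quadratic term cancel and yields $\norm{\nabla f(x_{k+1})} \leq \norm{\nabla f(x_k)}$. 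So you need to prove the $\ell$-smooth co-coercivity inequality first; mere convexity of $f$ is not enough.
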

\begin{remark}
  This convergence rate can be combined with Theorem~\ref{thm:convex_increasing}. Thus, one can take the minimum of the results from Theorems~\ref{thm:convex} and \ref{thm:convex_increasing}.
\end{remark}
We complement this theorem with another important result:
\begin{theorem}
  \label{thm:decreas_norm}
  Suppose that Assumptions~\ref{ass:gen_smooth} and \ref{ass:convex} hold. Then the sequence $\norm{\nabla f(x_k)}$ is decreasing.
\end{theorem}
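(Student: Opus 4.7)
The plan is to combine convexity of $f$ (which forces $\nabla^2 f \succeq 0$) with the tight integral choice of $\gamma_k$ to show that $\nabla f(x_{k+1})$ is the image of $\nabla f(x_k)$ under a symmetric positive semidefinite contraction. Assuming $M_k \eqdef \norm{\nabla f(x_k)} > 0$ (otherwise $x_{k+1}=x_k$ and the statement is trivial), the fundamental theorem of calculus gives
$$\nabla f(x_{k+1}) = (I - \gamma_k H_k)\nabla f(x_k),\quad H_k \eqdef \int_0^1 \nabla^2 f(x_k + t(x_{k+1}-x_k))\,dt,$$
which is valid because $x_{k+1}\in\mathcal{X}$ by Corollary~\ref{cor:main_new} and hence the whole segment $[x_k,x_{k+1}]$ lies in $\mathcal{X}$ by convexity of $\mathcal{X}$. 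Convexity of $f$ gives $H_k \succeq 0$, so it suffices to show $\gamma_k \norm{H_k} \leq 1$: then $0 \preceq \gamma_k H_k \preceq I$, hence $\norm{I - \gamma_k H_k}\leq 1$, and the claim $\norm{\nabla f(x_{k+1})} \leq \norm{\nabla f(x_k)}$ follows.

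To bound $\norm{H_k}$, I would chain Assumption~\ref{ass:gen_smooth}, monotonicity of $\ell$, and the gradient bound along the trajectory supplied by Lemma~\ref{lemma:first_alt}. Since $t\gamma_k M_k \leq \gamma_k M_k = q(M_k; M_k) < q_{\max}(M_k)$ (the equality holds by the substitution $w=M_k v$ in the definition of $\gamma_k$), Lemma~\ref{lemma:first_alt} and the triangle inequality give $\norm{\nabla f(x_k + t(x_{k+1}-x_k))} \leq M_k + q^{-1}(t\gamma_k M_k; M_k)$, so it remains to verify
$$\gamma_k \int_0^1 \ell\bigl(M_k + q^{-1}(t\gamma_k M_k; M_k)\bigr)\,dt \leq 1.$$

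The decisive step is the change of variables $u = q^{-1}(t\gamma_k M_k; M_k)$. Differentiating $q(s;a)=\int_0^s dv/\ell(a+v)$ gives $(q^{-1})'(z;M_k) = \ell(M_k + q^{-1}(z;M_k))$, so $du = \gamma_k M_k \, \ell(M_k + u)\,dt$ and hence $\gamma_k \ell(M_k + u)\,dt = du/M_k$. The limits transform to $u=0$ at $t=0$ and $u = q^{-1}(\gamma_k M_k; M_k) = M_k$ at $t=1$ (using $\gamma_k M_k = q(M_k;M_k)$, consistent with Remark~\ref{remark:step_size}), so the integral evaluates to $\int_0^{M_k} du/M_k = 1$, which is exactly the inequality we needed.

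The main obstacle is recognizing that this integral collapses to exactly $1$ precisely for the step size from Algorithm~\ref{alg:main}; the cheaper surrogate $\bar{\gamma}_k = 1/\ell(2M_k)$ from Remark~\ref{remark:step_size} would not, in general, yield a nonexpansive $I - \bar{\gamma}_k H_k$. Everything else (PSD of $H_k$ from convexity, the trajectory gradient bound via Lemma~\ref{lemma:first_alt}, monotonicity of $\ell$) is either standard or already established, so this exact cancellation is the heart of the argument and gives a second justification---beyond Corollary~\ref{cor:main_new}---for the integral-defined step size being the ``right'' one.
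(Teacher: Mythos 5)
Your proof is correct, but it takes a genuinely different route from the paper's. The paper expands $\norm{\nabla f(x_{k+1})}^2$ as a quadratic in the gradient increment, substitutes $\nabla f(x_k)=-\tfrac{1}{\gamma_k}(x_{k+1}-x_k)$, lower-bounds the resulting cross term with the $\ell$-smooth co-coercivity inequality \eqref{eq:gen_2} from Lemma~\ref{lemma:smooth_convex}, bounds the gradient increment via the identity $q^{-1}(\gamma_k\norm{\nabla f(x_k)};\norm{\nabla f(x_k)})=\norm{\nabla f(x_k)}$, and finishes by observing that $\int_0^1\frac{1-2v}{\ell(\norm{\nabla f(x_k)}+\norm{\nabla f(x_k)}v)}\,dv\geq 0$ by monotonicity of $\ell$. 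You instead write $\nabla f(x_{k+1})=(I-\gamma_k H_k)\nabla f(x_k)$ with $H_k=\int_0^1\nabla^2 f(x_k+t(x_{k+1}-x_k))\,dt\succeq 0$ and show $\gamma_k\norm{H_k}\leq 1$ by the exact change-of-variables cancellation $\gamma_k\int_0^1\ell(M_k+q^{-1}(t\gamma_k M_k;M_k))\,dt=1$; that computation, including $(q^{-1})'(z;M_k)=\ell(M_k+q^{-1}(z;M_k))$ and the endpoint value $q^{-1}(\gamma_k M_k;M_k)=M_k$, checks out, and the well-definedness caveats (the segment stays in $\mathcal{X}$, $t\gamma_k M_k<q_{\max}(M_k)$) are handled. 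Both arguments turn on the same key identity $\gamma_k M_k=q(M_k;M_k)$; your version gives a cleaner geometric reading (the update is a symmetric PSD contraction) and avoids invoking Lemma~\ref{lemma:smooth_convex}, at the cost of relying on the Hessian-integral representation and hence on twice differentiability. The paper's proof goes through Lemmas~\ref{lemma:first_alt} and \ref{lemma:smooth_convex} only, and therefore also covers the weaker setting where one assumes \eqref{eq:gen_smooth_alt} directly for non--twice-differentiable $f$, as flagged in the paper's remark after Lemma~\ref{lemma:first_alt}.
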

The idea behind this result is as follows: first, we wait for the moment when \algname{GD} returns a point $x_{\bar{T}(M)}$ such that $\norm{\nabla f(x_{\bar{T}(M)})} \leq M,$ which takes $\bar{T}(M)$ iterations. After that, \algname{GD} works in a region where the norm of the Hessians is bounded by $\ell(2 M),$ allowing us to apply classical convergence reasoning. The key observation is that this reasoning remains valid for all $M > 0$; therefore, we take the infimum over $M > 0.$

While Theorem~\ref{thm:convex} does not provide the final convergence rate for $f(x_{T}) - f(x_*),$ we now illustrate the steps that should be taken next. Due to Theorem~\ref{thm:decreas_norm}, the sequence $\{\norm{\nabla f(x_{k})}\}$ is decreasing. Using Theorem~\ref{thm:main_theorem}, one should should find $\bar{T}(M)$ as a function of $M,$ substitute $\bar{T}(M)$ into \eqref{eq:YXCruXFYWGMYZBotzm_new}, and minimize the obtained formula over $M > 0.$  Let us illustrate this on $(\rho, L_0, L_1)$--smooth functions.

\subsection{$(\rho, L_0, L_1)$--smoothness with $0 < \rho \leq 2$}
\label{sec:convex_1_smooth_alt}
According to \eqref{eq:rho_less_2}, $\bar{T}(M) = \max\left\{\frac{8 L_0 \Delta}{M^2}, \frac{32 L_1 \Delta}{M^{2 - \rho}}\right\}$ if $M < \norm{\nabla f(x_0)}$ and $\bar{T}(M) = 0$ if $M \geq \norm{\nabla f(x_0)}.$ Since $\ell(s) = L_0 + L_1 s^\rho,$ we should minimize \eqref{eq:YXCruXFYWGMYZBotzm_new} and consider
\begin{align*}
&\textstyle \simeq \min\Big\{\inf\limits_{M < \norm{\nabla f(x_0)}}\left[\bar{T}(M) + \frac{\ell(2 M) R^2}{2 \varepsilon}\right], \\
&\textstyle \hspace{1.3cm} \inf\limits_{M \geq \norm{\nabla f(x_0)}}\left[\bar{T}(M) + \frac{\ell(2 M) R^2}{2 \varepsilon}\right]\Big\} \\
&\textstyle \simeq \min\Big\{\inf_{M \geq 0} \left[\max\left\{\frac{L_0 \Delta}{M^2}, \frac{L_1 \Delta}{M^{2 - \rho}}, \frac{L_0 R^2}{\varepsilon}, \frac{L_1 M^\rho R^2}{\varepsilon}\right\}\right], \\
&\textstyle \hspace{1.3cm} \max\left\{\frac{L_0 R^2}{\varepsilon}, \frac{L_1 \norm{\nabla f(x_0)}^\rho R^2}{\varepsilon}\right\}\Big\},
\end{align*}
where, for simplicity, we ignore all universal constants. The term with $\inf$ is minimized with $M = \sqrt{\nicefrac{\varepsilon \Delta}{R^2}}.$ Thus, we get 
\begin{align}
    \label{eq:JFDpbj}
    \textstyle  \max\left\{\frac{L_0 R^2}{\varepsilon}, \min\left\{\frac{L_1 \Delta^{\rho / 2} R^{2 - \rho}}{\varepsilon^{1 - \rho / 2}}, \frac{L_1 \norm{\nabla f(x_0)}^\rho R^2}{\varepsilon}\right\}\right\}.
  \end{align}
Unlike \eqref{eq:vRygeHvEPPSvUeypZdKD}, \eqref{eq:JFDpbj} is finite when $\rho = 2.$ For $\rho = 1,$ this setting reduces to the $(L_0, L_1)$--smooth case with the complexity $\tilde{T} \eqdef \cO\left(\max\left\{\nicefrac{L_0 R^2}{\varepsilon}, \min\{\nicefrac{L_1 \Delta^{1 / 2} R}{\varepsilon^{1 / 2}}, \nicefrac{L_1 \norm{\nabla f(x_0)} R^2}{\varepsilon}\}\right\}\right).$ This complexity can be better than $\cO\left(\max\left\{\nicefrac{L_0 R^2}{\varepsilon}, \min\{L_1^2 R^2, \nicefrac{L_1 \norm{\nabla f(x_0)} R^2}{\varepsilon}\}\right\}\right)$ and improve the results by \citet{li2024convex,gorbunov2024methods,vankov2024optimizing}. As an example, consider the convex function $f \,:\, \R \to \R$ such that $f(x) = - \mu x + e^{L_1 x},$ which is $(L_1 \mu, L_1)$--smooth (see Section~\ref{sec:example}). Taking $x_0 = 0,$ we get $f(x_0) - f(x_*) \leq 1.$ At the same time, letting $\mu \to 0,$ the distance $R = 1 / L_1 \abs{\log(\mu / L_1)}$ diverges to infinity, $\nicefrac{\mu L_1 R^2}{\varepsilon}$ converges to zero, and $\min\{L_1^2 R^2, \nicefrac{L_1 \norm{\nabla f(x_0)} R^2}{\varepsilon}\}$ can become arbitrarily larger than $\nicefrac{L_1 \Delta^{1 / 2} R}{\varepsilon^{1 / 2}},$ because the latter scales linearly with $R.$

\subsection{$(\rho, L_0, L_1)$--smoothness with $\rho > 2$}
\label{sec:rho_big}
Theorem~\ref{thm:convex} works even if $\rho > 2$ in $(\rho, L_0, L_1)$--smoothness. In the convex setting, Assumption~\ref{ass:bounded_grad} \emph{is not required} since $\norm{\nabla f(x_k)} \leq \norm{\nabla f(x_0)}$ for all $k \geq 0.$ Similarly to Section~\ref{sec:nonconvex_rho}, we can conclude that $\norm{\nabla f(x_{\bar{T}(M)})} \leq M$ for $\bar{T}(M) = \max\left\{\frac{8 L_0 \Delta}{M^2}, 64 L_1 \Delta (2 \norm{\nabla f(x_0)})^{\rho - 2}\right\}$ if $M < \norm{\nabla f(x_0)},$ and $\bar{T}(M) = 0$ if $M \geq \norm{\nabla f(x_0)}.$ Substituting $\bar{T}(M)$ into \eqref{eq:YXCruXFYWGMYZBotzm_new}, we obtain 
\begin{equation}
\label{eq:sbCfGKzsocOQFo}
\begin{aligned}
    &\textstyle \frac{L_0 R^2}{\varepsilon} + \min\Bigg\{L_1 \Delta (2 M_0)^{\rho - 2} \\
    &\textstyle + \frac{L_0^{\frac{\rho}{2 + \rho}} \Delta^{\frac{\rho}{2 + \rho}} L_1^{\frac{2}{2 + \rho}} R^{\frac{4}{2 + \rho}}}{\varepsilon^{\frac{2}{2 + \rho}}}, \frac{L_1 M_0^\rho R^2}{\varepsilon}\Bigg\},
\end{aligned}
\end{equation}
up to universal constant factors, where $M_0 \eqdef \norm{\nabla f(x_0)}.$

\begin{algorithm}[t]
    \caption{Stochastic Gradient Descent (\algname{SGD}) with $\ell$-Smoothness}
    \label{alg:main_sgd}
    \begin{algorithmic}[1]
    \STATE \textbf{Input:} starting point $x_0 \in \mathcal{X},$ function $\ell$ from Assumption~\ref{ass:gen_smooth}, batch size $B$
    \STATE Find the ratio $r = \sup\limits_{s \geq 0} [\ell(2 s) / \ell(s)]$
    \FOR{$k = 0, 1, \dots$}
    \STATE Calculate 
    $g_k = \frac{1}{B} \sum_{j=1}^{B} \nabla f(x_k; \xi_{kj})$ 
    \\ ($\{\xi_{kj}\}$ are i.i.d.)
    \STATE $\displaystyle \gamma_k = \frac{1}{5 r} \int_{0}^{1} \frac{d v}{\ell\left(\norm{g_k} + \norm{g_k} v\right)}$
    \STATE $x_{k+1} = x_k - \gamma_k g_k$
    \ENDFOR
    \end{algorithmic}
  \end{algorithm}

\subsection{Convergence guarantees for small $\varepsilon$}
\label{sec:gen}
In many practical problems, finding a term in convergence rates that dominates when $\varepsilon$ is small is sufficient. We notice that the term $\nicefrac{L_0 R^2}{\varepsilon}$ dominates in all derived complexities. It turns out we can generalize this observation:
\begin{corollary}[Subquadratic and Quadratic Growth of $\ell$]
  \label{thm:convex_cor_sub}
  Consider Theorem~\ref{thm:convex}. Additionally, assume that the function $\psi_2(x) = \frac{x^2}{\ell(2 x)}$ is strictly increasing. Then \algname{GD} finds an $\varepsilon$--solution after at most 
  \begin{align}
    \label{eq:AEeptIOmeAKdD}
    \textstyle \frac{\ell(0) R^2}{\varepsilon} + \min\left\{\bar{T}\left(\ell, \Delta\right), \frac{\ell(2 M_0) R^2}{2 \varepsilon}\right\}
  \end{align}
  iterations, where $M_0 \eqdef \norm{\nabla f(x_0)}.$ $\bar{T}\left(\ell, \Delta\right)$ depends only on $\ell$ and $\Delta,$ and does not depend on $\varepsilon.$
\end{corollary}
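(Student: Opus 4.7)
The plan is to apply Theorem~\ref{thm:convex} twice, with two different choices of $M$ in the infimum, and then take the minimum of the two resulting bounds. The first choice is a small, $\varepsilon$-free threshold $M^*$ designed so that $\ell(2M^*)$ is close to $\ell(0)$; the second is $M=M_0=\norm{\nabla f(x_0)}$, which trivially gives $\bar{T}(M_0)=0$.

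First I would fix $M^*>0$ with $\ell(2M^*)\leq 2\ell(0)$. Such an $M^*$ exists and depends only on $\ell$, because Assumption~\ref{ass:gen_smooth} forces $\ell(0)>0$ and makes $\ell$ continuous at $0$ (locally Lipschitz). Next, since $\psi_2$ is strictly increasing with $\psi_2(0)=0$, choosing $T=\lceil 8\Delta/\psi_2(M^*)\rceil$ makes $\psi_2^{-1}(8\Delta/T)\leq M^*$ well defined, so Corollary~\ref{cor:conv_nonconvex_incr} gives $\min_{k<T}\norm{\nabla f(x_k)}\leq M^*$. Invoking Theorem~\ref{thm:decreas_norm} (gradient norms are nonincreasing), this yields $\bar{T}(M^*)\leq \lceil 8\Delta\,\ell(2M^*)/M^{*2}\rceil$, a quantity depending only on $\ell$ and $\Delta$; I would take this as my $\bar{T}(\ell,\Delta)$.

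Plugging $M=M^*$ into Theorem~\ref{thm:convex} gives the bound $\bar{T}(M^*)+\ell(2M^*)R^2/(2\varepsilon)\leq \bar{T}(\ell,\Delta)+\ell(0)R^2/\varepsilon$, where I used $\ell(2M^*)\leq 2\ell(0)$. Plugging $M=M_0$ gives $0+\ell(2M_0)R^2/(2\varepsilon)$, trivially at most $\ell(0)R^2/\varepsilon+\ell(2M_0)R^2/(2\varepsilon)$. Both expressions upper-bound the iteration complexity, hence so does their minimum, namely $\ell(0)R^2/\varepsilon+\min\{\bar{T}(\ell,\Delta),\,\ell(2M_0)R^2/(2\varepsilon)\}$, which is the claim.

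The main difficulty is conceptual rather than technical: Theorem~\ref{thm:convex}'s infimum must be used not to optimize the trade-off for a fixed $\varepsilon$, but to \emph{split} the complexity into an $\varepsilon$-dominant piece and an $\varepsilon$-free overhead. Continuity of $\ell$ at zero then lets the leading constant collapse from $\ell(2M)$ down to $\ell(0)$ at the cost of only a multiplicative factor of two and the finite overhead $\bar{T}(\ell,\Delta)$.
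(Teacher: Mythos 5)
Your proof is correct and follows essentially the same route as the paper's: fix an $\varepsilon$-free threshold $M^*$ (paper's $\bar{\delta}(\ell)$) with $\ell(2M^*)\leq 2\ell(0)$ via continuity of $\ell$, bound $\bar{T}(M^*)$ using Corollary~\ref{cor:conv_nonconvex_incr} together with the monotonicity from Theorem~\ref{thm:decreas_norm}, then substitute $M=M^*$ and $M=M_0$ into Theorem~\ref{thm:convex} and take the minimum. Your version is slightly more explicit than the paper's (you give the concrete formula $\bar{T}(\ell,\Delta)=\lceil 8\Delta\,\ell(2M^*)/M^{*2}\rceil$ and verify directly that $8\Delta/T\in\textnormal{im}(\psi_2)$), but the decomposition, the lemmas invoked, and the logic are identical.
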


\begin{corollary}[Superquadratic Growth of $\ell$]
  \label{thm:convex_cor}
  Consider Theorem~\ref{thm:convex}. Then \algname{GD} finds an $\varepsilon$--solution after at most 
  \begin{align}
    \label{eq:ZnzTTjAuiSZC}
    \textstyle \frac{\ell(0) R^2}{\varepsilon} + \min\left\{\bar{T}\left(\ell, \Delta, M_0\right), \frac{\ell(2 M_0) R^2}{2 \varepsilon}\right\}
  \end{align}
  iterations, where $M_0 \eqdef \norm{\nabla f(x_0)}.$ $\bar{T}\left(\ell, \Delta, M_0\right)$ depends only on $\ell,$ $\Delta,$ and $M_0,$ and does not depend on $\varepsilon.$
\end{corollary}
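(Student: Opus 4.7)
The plan is to apply Theorem~\ref{thm:convex}, which furnishes the bound $\inf_{M > 0}[\bar{T}(M) + \ell(2M) R^2/(2\varepsilon)]$, at two carefully chosen values of $M$ and take the smaller. The first choice, $M = M_0 = \norm{\nabla f(x_0)}$, trivially gives $\bar{T}(M_0) = 0$ by definition and produces the term $\ell(2 M_0) R^2/(2 \varepsilon)$. The second, a small $M^* > 0$ depending only on $\ell$, would be designed to force $\ell(2 M^*) \leq 2 \ell(0)$, delivering the dominant term $\ell(0) R^2/\varepsilon$ at the price of an $\varepsilon$-independent overhead that I would call $\bar{T}(\ell, \Delta, M_0)$. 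Taking the minimum of the two resulting bounds is exactly the right-hand side of \eqref{eq:ZnzTTjAuiSZC}.

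Constructing $M^*$ would be immediate: since $\ell$ is continuous, non-decreasing, and strictly positive with $\ell(0) > 0$, continuity at $0$ yields some $M^* \in (0, \infty)$ depending only on $\ell$ with $\ell(2 M^*) \leq 2 \ell(0)$. If $M^* \geq M_0$ then $\bar{T}(M^*) = 0$ trivially, so I may assume $M^* < M_0$.

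The crux, and the only place where superquadratic growth of $\ell$ matters, is to bound $\bar{T}(M^*)$ independently of $\varepsilon$. I cannot route this through Theorem~\ref{thm:convex_increasing} because $\psi_2(x) = x^2/\ell(2x)$ need not be monotone. Instead, I would exploit that Theorem~\ref{thm:decreas_norm} guarantees $\{\norm{\nabla f(x_k)}\}$ is decreasing in the convex setting: as long as the iterates have not yet entered the region $\{x : \norm{\nabla f(x)} \leq M^*\}$, every iterate satisfies $\norm{\nabla f(x_k)} \in (M^*, M_0]$. On the compact interval $[M^*, M_0]$ the continuous positive function $\psi_2$ attains a positive minimum $\psi_2^{\min} > 0$ depending only on $\ell$ and $M_0$ (since $M^*$ depends only on $\ell$). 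Combining this with the per-iteration decrement $f(x_{k+1}) \leq f(x_k) - \psi_2(\norm{\nabla f(x_k)})/4$, obtained by chaining the descent inequality in Theorem~\ref{thm:main_theorem} with the lower bound $\gamma_k \geq 1/\ell(2 \norm{\nabla f(x_k)})$ from Remark~\ref{remark:step_size}, summing across iterations at which the gradient norm exceeds $M^*$ and invoking Assumption~\ref{ass:lower_bound} produces $\bar{T}(M^*) \leq 4 \Delta / \psi_2^{\min} =: \bar{T}(\ell, \Delta, M_0)$, which is manifestly $\varepsilon$-independent as required.

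Plugging the two choices $M = M_0$ and $M = M^*$ into the infimum in Theorem~\ref{thm:convex} then gives $\ell(2 M_0) R^2/(2 \varepsilon) \leq \ell(0) R^2/\varepsilon + \ell(2 M_0) R^2/(2 \varepsilon)$ and $\bar{T}(\ell, \Delta, M_0) + \ell(0) R^2/\varepsilon$ as upper bounds respectively, and taking the minimum of the two overheads on top of $\ell(0)R^2/\varepsilon$ is exactly \eqref{eq:ZnzTTjAuiSZC}. The hard part is the $\varepsilon$-free control of $\bar{T}(M^*)$ without any monotonicity of $\psi_2$; the compactness of $[M^*, M_0]$ together with the monotone decrease of gradient norms in Theorem~\ref{thm:decreas_norm} is precisely what unlocks it.
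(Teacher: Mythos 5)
Your proposal is correct and follows essentially the same route as the paper's proof: pick a threshold $M^*>0$ via continuity of $\ell$ at $0$ so that $\ell(2M^*)\le 2\ell(0)$, use the per-iteration descent from Theorem~\ref{thm:main_theorem} together with the monotonicity of gradient norms from Theorem~\ref{thm:decreas_norm} to bound $\bar T(M^*)$ independently of $\varepsilon$, substitute into Theorem~\ref{thm:convex}, and combine with the trivial choice $M=M_0$ to get the $\min$. The only cosmetic difference is that the paper produces the $\varepsilon$-free overhead explicitly as $\bar T \le 4\ell(2M_0)\Delta/\bar\delta(\ell)$ (using the uniform bound $\ell(2\|\nabla f(x_k)\|)\le\ell(2M_0)$), whereas you invoke compactness of $[M^*,M_0]$ to obtain an abstract positive lower bound $\psi_2^{\min}$ on the decrement — both are valid bounds on the same quantity, with the paper's version being slightly more explicit about the dependence on $\ell$, $\Delta$, and $M_0$.
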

Thus, for small $\varepsilon,$ Algorithm~\ref{alg:main} converges after at most $\Theta\left(\nicefrac{\ell(0) R^2}{\varepsilon}\right)$ iterations. In other words, Algorithm~\ref{alg:main} behaves like the classical \algname{GD} method with the step size $1 / \ell(0).$ In contrast, \citet{li2024convex} proved a significantly weaker convergence rate $\Theta\left(\nicefrac{\ell(\norm{\nabla f(x_0)}) R^2}{\varepsilon}\right)$ for \algnamesmall{GD}. 

\section{Stochastic Gradient Descent}

We can obtain similar results in a stochastic setting, where we access to stochastic gradients $\nabla f(x;\xi)$ characterized by the following ``light-tail'' assumption \citep{lan2020first}.
\begin{assumption}
  \label{ass:stoch}
  For all $x \in \mathcal{X},$ the stochastic gradients $\nabla f(x;\xi)$ satisfy $\ExpSub{\xi}{\nabla f(x;\xi)} = \nabla f(x)$ and $\ExpSub{\xi}{\exp\left(\norm{\nabla f(x;\xi) - \nabla f(x)}^2 / \sigma^2\right)} \leq \exp(1)$ for some $\sigma > 0.$
\end{assumption}
We can prove the following result that extend Theorem~\ref{thm:main_theorem}:
\begin{theorem}
  \label{thm:main_theorem_stoch}
  Suppose that Assumptions~\ref{ass:gen_smooth}, \ref{ass:lower_bound}, and \ref{ass:stoch} hold. Let $T$ denote the number required to ensure that $\min_{k \in \{0, \dots, T - 1\}}\norm{\nabla f(x_k)}^2 \leq \varepsilon$ based on
  \begin{align}
    \label{eq:qnlMg}
    \min_{k \in \{0, \dots, T - 1\}} \frac{(\frac{3}{2} \norm{\nabla f(x_k)})^2}{\ell\left(3 \norm{\nabla f(x_k)}\right)}  \leq r \times \frac{45 \Delta}{T}.
  \end{align}
  Then, with probability $1 - \delta$ and batch size $B = \max\left\{\left\lceil32 \left(1 + \sqrt{3 \log(T / \delta)}\right)^2 \sigma^2 / \varepsilon\right\rceil, 1\right\},$ Algorithm~\ref{alg:main_sgd} finds an $\varepsilon$--stationary point after $T$ iterations, and the total number of computed stochastic gradients is $B \times T.$
\end{theorem}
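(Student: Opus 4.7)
The plan is to run the deterministic argument of Theorem~\ref{thm:main_theorem} on a high-probability event on which the mini-batched gradient $g_k$ is a sufficiently accurate proxy for $\nabla f(x_k)$. The factor $1/(5r)$ in the step size and the replacement $\ell(2\|\nabla f(x_k)\|) \to \ell(3\|\nabla f(x_k)\|)$ in~\eqref{eq:qnlMg} are exactly the slack needed to absorb the residual stochastic error.

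First, Assumption~\ref{ass:stoch} makes each $\nabla f(x_k;\xi_{kj}) - \nabla f(x_k)$ sub-Gaussian in norm with parameter $\sigma$. Standard light-tail concentration for averages of independent sub-Gaussian vectors (e.g. the Jensen-type bound in \citep{lan2020first}) then yields $\|g_k - \nabla f(x_k)\| \leq \sigma(1+\sqrt{3\log(T/\delta)})/\sqrt{B}$ with probability at least $1-\delta/T$, and the choice of $B$ forces this to be no larger than $\sqrt{\varepsilon/32}$. A union bound over $k=0,\dots,T-1$ defines an event $\mathcal{E}$ with $\mathbb{P}(\mathcal{E}) \geq 1-\delta$ on which $\|n_k\|^2 \leq \varepsilon/32$ for every $k$, writing $n_k := g_k - \nabla f(x_k)$. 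I then argue on $\mathcal{E}$ by supposing for contradiction that $\|\nabla f(x_k)\|^2 > \varepsilon$ for all $k \leq T-1$. This immediately gives $\|n_k\| \leq \|\nabla f(x_k)\|/\sqrt{32}$ and the sandwich $\tfrac{1}{2}\|\nabla f(x_k)\| \leq \|g_k\| \leq \tfrac{3}{2}\|\nabla f(x_k)\|$. Combined with the definition of $r$ and the monotonicity of $\ell$, this upgrades to $\gamma_k \geq 1/(5r\,\ell(3\|\nabla f(x_k)\|))$, a stochastic analog of Remark~\ref{remark:step_size}.

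Second, I invoke Lemma~\ref{lemma:second_alt} with $x = x_k$ and $y = x_{k+1} = x_k - \gamma_k g_k$, which gives
\begin{equation*}
f(x_{k+1}) - f(x_k) \leq -\gamma_k \langle \nabla f(x_k), g_k\rangle + \int_0^{\gamma_k\|g_k\|} q^{-1}(\tau;\|\nabla f(x_k)\|)\,d\tau.
\end{equation*}
Splitting $g_k = \nabla f(x_k) + n_k$ separates a dominant $-\gamma_k\|\nabla f(x_k)\|^2$ from a cross term $-\gamma_k\langle\nabla f(x_k), n_k\rangle$, which I control by Young's inequality against $\|n_k\|^2 \leq \|\nabla f(x_k)\|^2/32$. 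The $1/(5r)$ factor in $\gamma_k$ is precisely what keeps $\gamma_k\|g_k\|$ inside $[0, q_{\max}(\|\nabla f(x_k)\|))$, making Lemma~\ref{lemma:second_alt} legitimately applicable, and at the same time lets me bound $q^{-1}(\tau;\|\nabla f(x_k)\|) \leq c\,\ell(3\|\nabla f(x_k)\|)\tau$ on the relevant interval via the identity $q^{-1}(\gamma_k\|g_k\|;\|g_k\|) = \|g_k\|$ from Remark~\ref{remark:step_size} together with the sandwich above. Collecting constants gives a deterministic descent of the form $f(x_{k+1}) \leq f(x_k) - c'\|\nabla f(x_k)\|^2/(r\,\ell(3\|\nabla f(x_k)\|))$ on $\mathcal{E}$, with $c'$ matching the $1/45$ prefactor in~\eqref{eq:qnlMg}.

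Finally, telescoping from $k=0$ to $T-1$, using $f^* \leq f(x_T)$ from Assumption~\ref{ass:lower_bound}, and rearranging yields~\eqref{eq:qnlMg} with the stated constant $45r$. By the definition of $T$ in the theorem statement this forces $\min_k\|\nabla f(x_k)\|^2 \leq \varepsilon$, contradicting the working hypothesis and thus proving the claim on $\mathcal{E}$. The total stochastic gradient count is $B\cdot T$ by construction. I expect the main obstacle, and the reason for the unusual constants $5r$, $3\|\nabla f(x_k)\|$, and $45$, to be the descent step: the step size is driven by $\|g_k\|$ while every conclusion we want is phrased in $\|\nabla f(x_k)\|$, so the transfer between the two has to be executed consistently inside the $q^{-1}$ integral, the Young bound, and the telescoping, without forfeiting the advertised $1/(5r)$ prefactor.
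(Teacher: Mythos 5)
Your overall skeleton matches the paper: concentration plus union bound to define a high-probability event, a proof by contradiction assuming $\norm{\nabla f(x_k)}^2 > \varepsilon$ everywhere, a descent step via Lemma~\ref{lemma:second_alt}, and telescoping against Assumption~\ref{ass:lower_bound}. But there are two concrete problems, one minor and one serious.

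Minor: the concentration inequality from Lemma~\ref{lemma:large_deviations} gives $\norm{g_k-\nabla f(x_k)}\leq \sqrt{2}(1+\lambda)\sigma/\sqrt{B}$, not $(1+\lambda)\sigma/\sqrt{B}$; with the stated $B$ this is $\sqrt{\varepsilon}/4$, i.e.\ $\norm{n_k}^2 \leq \varepsilon/16$, not $\varepsilon/32$. Correspondingly the sandwich on the contradiction event is $\tfrac{3}{4}\norm{\nabla f(x_k)} \leq \norm{g_k} \leq \tfrac{5}{4}\norm{\nabla f(x_k)}$; your wider bounds $\tfrac{1}{2}\cdot$ and $\tfrac{3}{2}\cdot$ are valid but needlessly loose and do not reflect what the concentration actually yields.

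Serious: your plan for the integral term $\int_{0}^{\gamma_k\norm{g_k}} q^{-1}(\tau;\norm{\nabla f(x_k)})\,d\tau$ is not sound as described. You invoke ``the identity $q^{-1}(\gamma_k\norm{g_k};\norm{g_k})=\norm{g_k}$ from Remark~\ref{remark:step_size}.'' That identity does not hold for the step size in Algorithm~\ref{alg:main_sgd}: because of the $1/(5r)$ prefactor, what holds is $q^{-1}(5r\gamma_k\norm{g_k};\norm{g_k})=\norm{g_k}$. More importantly, the quantity you need to control for Lemma~\ref{lemma:second_alt} is $q^{-1}(\cdot;\norm{\nabla f(x_k)})$ — with reference norm $\norm{\nabla f(x_k)}$, not $\norm{g_k}$ — and $q^{-1}(z;a)$ is increasing in $a$, so no amount of hand-waving converts the identity into the bound you need. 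The paper's proof handles exactly this by first establishing (its display \eqref{eq:GDwRqmSJByM}) that
\begin{align*}
\gamma_k\norm{g_k} \;\leq\; q\!\left(\tfrac{1}{4}\norm{\nabla f(x_k)};\norm{\nabla f(x_k)}\right),
\end{align*}
which is an explicit comparison of integrals that uses the ratio $r \geq \ell(\norm{\nabla f(x_k)}(1+v))/\ell(\tfrac{1}{2}\norm{\nabla f(x_k)}(1+v))$ to cancel the mismatch between the $\norm{g_k}$ appearing in the step size and the $\norm{\nabla f(x_k)}$ appearing in the reference norm. From there $q^{-1}(\gamma_k\norm{g_k};\norm{\nabla f(x_k)}) \leq \tfrac{1}{4}\norm{\nabla f(x_k)}$, the integral is bounded by $\gamma_k\norm{g_k}\cdot\tfrac{1}{4}\norm{\nabla f(x_k)}\leq \tfrac{5\gamma_k}{16}\norm{\nabla f(x_k)}^2$, and the per-step descent $f(x_{k+1})\leq f(x_k)-\tfrac{\gamma_k}{4}\norm{\nabla f(x_k)}^2$ follows by Cauchy--Schwarz (Young's inequality would also do). This is the single step where $r$ actually enters the argument and where the constants in \eqref{eq:qnlMg} come from; your proposal identifies it as ``the main obstacle'' but does not actually overcome it. Until that comparison is written out, the proof is incomplete.
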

Let us explain how the theorem works. Notice that the convergence guarantee \eqref{eq:qnlMg} coincides with \eqref{eq:main_conv}, differing only in 
$r$ and universal constants. To find $T,$ one can use exactly the same reasoning as in Section~\ref{sec:nonconvex}. Specifically, $T$ is shown to be the same as in that section, multiplied by $r$ and a universal constant. The main difference lies in $r.$ For instance, $r = 2^{\rho}$ for $(\rho, L_0, L_1)$--smoothness. If $\rho \leq 2,$ then $r \leq 4,$ meaning it is simply a constant factor.
Overall, for any $\ell,$ the total number of computed stochastic gradients is $\Theta(B \times T).$ For $(\rho, L_0, L_1)$--smoothness with $0 < \rho \leq 2,$ we get
\begin{align*}
    \widetilde{\Theta}\left(\frac{\sigma^2 L_0 \Delta}{\varepsilon^2} + \frac{\sigma^2 L_1 \Delta}{\varepsilon^{(4 - \rho) / 2}} + \frac{L_0 \Delta}{\varepsilon} + \frac{L_1 \Delta}{\varepsilon^{(2 - \rho) / 2}}\right),
\end{align*}
ignoring the logarithmic factor. For $\varepsilon$ small enough, the dominating term is $\widetilde{\Theta}\left(\nicefrac{\sigma^2 L_0 \Delta}{\varepsilon^2}\right),$ that, up to logarithmic factors, recovers the optimal rate \citep{arjevani2022lower}. Furthermore, unlike \citep{li2024convex}, we can extend these convergence guarantees to cases where $\rho \geq 2,$ and the dominating term does not depend on $L_1.$ Our approach assumes stochastic gradients with ``light tails,'' which is necessary for handling the non-constant, gradient-dependent step sizes in Algorithm~\ref{alg:main_sgd}. Extending these results to settings with ``heavy tails'' is an important and challenging research question.

\section{Conclusion and Future Work}
This work offers new insights into modern optimization within the framework of the $\ell$-smoothness assumption. We present new lemmas, algorithms, and convergence rates. 
However, numerous other directions remain to be explored, including stochastic optimization with ``heavy tails'' \citep{robbins1951stochastic,lan2020first}, acceleration of Algorithm~\ref{alg:main} (addressing the exponential dependence on $L_1$ and $R$, or resolving the need to solve an auxiliary one-dimensional optimization problem in each iteration \citep{gorbunov2024methods,vankov2024optimizing}), variance reduction \citep{schmidt2017minimizing}, federated learning, and distributed optimization \citep{konevcny2016federated}.

\section*{Acknowledgements}
This work was supported by the Ministry of Economic Development of the Russian Federation (code 25-139-66879-1-0003). We would like to thank the reviewers for their highly positive and constructive feedback.

\section*{Impact Statement}
This paper presents work whose goal is to advance the field of 
Machine Learning. There are many potential societal consequences 
of our work, none which we feel must be specifically highlighted here.

\bibliography{example_paper}
\bibliographystyle{icml2025}
\newpage
\appendix
\onecolumn
\section{Experiments}

We verify our theoretical results by asking whether it is necessary to use the step size rule from Algorithm~\ref{alg:main}, and maybe it is sufficient to use the step size rules by \citet{li2024convex,vankov2024optimizing} instead. We first take the function $f \,:\, [0, 0.1] \to \R \cup \{\infty\}$ defined as $f(x) = - \log x - \log(0.1 - x)$ (see Figure~\ref{fig:fig1}), which has its minimum at $x_* = 0.05.$ This function is $(\rho, 800, 2)$--smooth with $\rho = 2$; however, it is not $(L_0, L_1)$--smooth for any $L_0, L_1 \geq 0.$ Consequently, we run Algorithm~\ref{alg:main} with $\ell(s) = 800 + 2 s^2,$ starting at $x_0 = 10^{-7},$ and observe that it converges\footnote{finds $\bar{x}$ such that $f(\bar{x}) - f(x_*) \leq 10^{-5}$} after 75 iterations. Next, we take the step size $\gamma_k = 1 / (800 + 2 (2 f'(x_0))^2)$ from \citep{li2024convex} and observe that \algname{GD} requires at least $20.000$ iterations because $f'(x_0)$ is huge. Finally, to verify whether the exponent $2$ is necessary, we take $\ell(s) = 800 + 2 s$ \citep{vankov2024optimizing}. For this choice of $\ell,$ \algname{GD} diverges.
\begin{figure}[H]
  \centering
  \includegraphics[width=0.6\textwidth]{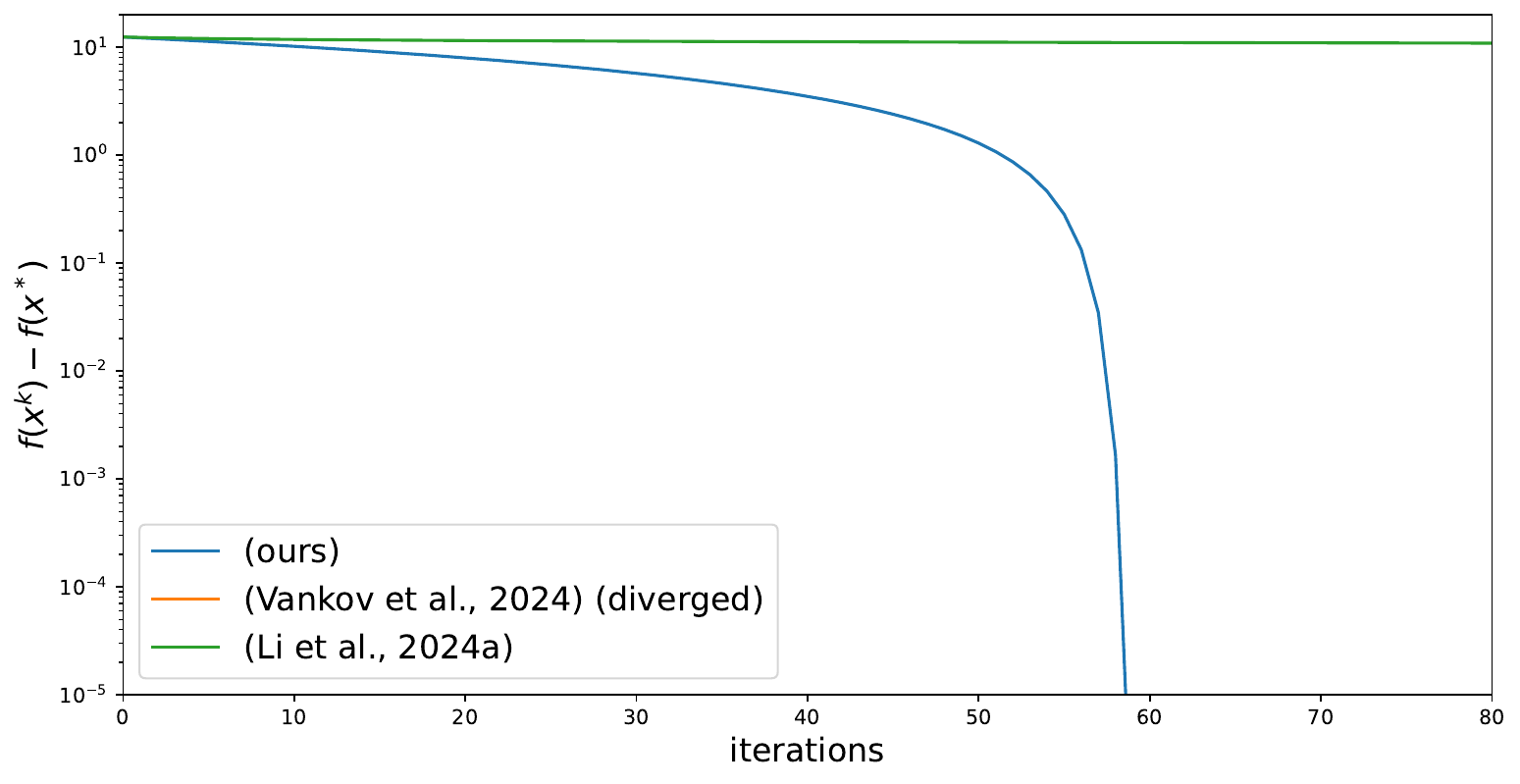}
  \caption{Experiment with $f(x) = - \log x - \log(0.1 - x).$}
  \label{fig:fig1}
\end{figure}
We repeat this experiment with the function $f \,:\, \R \to \R$ defined as $f(x) = e^x + e^{1-x}$ (see Figure~\ref{fig:fig2}) which has its minimum at $x_* = 0.5.$ This function is $(3.3, 1)$--smooth, meaning we can run Algorithm~\ref{alg:main} with $\ell(s) = 3.3 + s.$ It converges after at most $20$ iterations. At the same time, if we choose $\ell(s) = 3.3 + s^2$ or $\gamma_k = (3.3 + 2 \abs{f'(x_0)})^{-1}$ \citep{li2024convex}, \algname{GD} requires at least $200$ iterations to converge. These experiments underscore the importance of our step size rule and the right choice of step size and normalization.

\begin{figure}[H]
  \centering
  \includegraphics[width=0.6\textwidth]{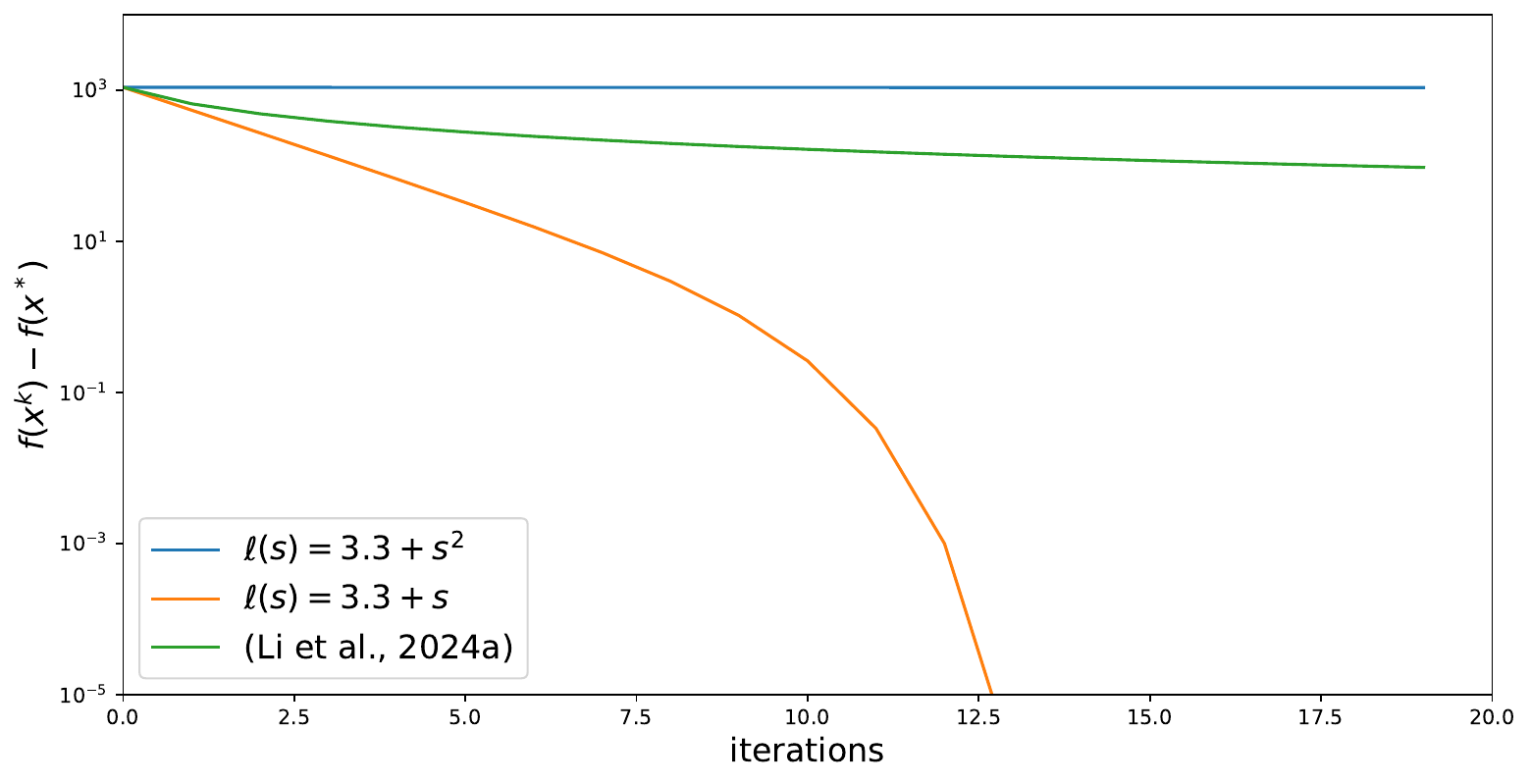}
  \caption{Experiment with $f(x) = e^x + e^{1-x}.$}
  \label{fig:fig2}
\end{figure}

\newpage

\section{Proof of Lemma~\ref{lemma:first_alt}}

The following proof is based on the techniques from \citep{li2024convex,vankov2024optimizing}.

\begin{proof}
  It is sufficient to prove that for all $x, x + t h \in \mathcal{X},$ $t \in [0, q_{\max})$ such that $\norm{h} = 1,$ if $f$ is $\ell$--smooth (Assumption~\ref{ass:gen_smooth}), then
  \begin{align}
    \label{eq:gen_smooth_old}
    \norm{\nabla f(x + t h) - \nabla f(x)} \leq q^{-1}(t;\norm{\nabla f(x)}),
  \end{align}
  where $q$ and $q_{\max} \equiv q_{\max}(\norm{\nabla f(x)})$ are defined in Definition~\ref{def:q_function}. One can get \eqref{eq:gen_smooth_alt} from \eqref{eq:gen_smooth_old} with $y = x + th$ and $t = \norm{y - x}.$

  Let us define $v(t) \eqdef \int_{0}^{t} \ell(\norm{\nabla f(x + h \tau)}) d \tau.$ Using Taylor expansion, we get
  \begin{equation}
    \begin{aligned}
      &\norm{\nabla f(x + h t) - \nabla f(x)} = \norm{\int_{0}^{t} \nabla^2 f(x + h \tau) d \tau h} \leq \norm{h} \int_{0}^{t} \norm{\nabla^2 f(x + h \tau)} d \tau\\
      &= \int_{0}^{t} \norm{\nabla^2 f(x + h \tau)} d \tau \leq \int_{0}^{t} \ell(\norm{\nabla f(x + h \tau)}) d \tau =  v(t).
    \end{aligned}
    \label{eq:WIwoRoO}
    \end{equation}
  In the first inequality, we use the definition of the operator norm and the triangle inequality for the integral. The triangle inequality yields
  \begin{equation*}
  \begin{aligned}
    \norm{\nabla f(x + h t)} 
    &\leq \norm{\nabla f(x)} + \norm{\nabla f(x + h t) - \nabla f(x)} \leq \norm{\nabla f(x)} +  v(t).
  \end{aligned}
  \end{equation*}
  Note that $v'(t) = \ell(\norm{\nabla f(x + h t)})$ and $\ell$ is non-decreasing.
  Thus
    $v'(t) \leq \ell(\norm{\nabla f(x)} +  v(t))$ for all $t \geq 0$ and $v(0) = 0.$ Instead of this inequality, consider the differential equation
    \begin{align}
      g'(t) = \ell(\norm{\nabla f(x)} +  g(t)), \quad g(0) = 0,
      \label{eq:diff_eq}
    \end{align}
    where $g\,:\,\R_+ \to \R$ is a solution, and $\norm{\nabla f(x)}$ is a fixed quantity. Using the standard differential algebra, we can solve it:
    \begin{align*}
      \frac{d g(t)}{\ell(\norm{\nabla f(x)} +  g(t))} = dt \Rightarrow \int_{0}^{t} \frac{d g(v)}{\ell(\norm{\nabla f(x)} +  g(v))} = t \Rightarrow \int_{0}^{g(t)} \frac{d v}{\ell(\norm{\nabla f(x)} +  v)} = t.
    \end{align*} Recall the definition of the function $q,$
    which is an increasing and differentiable function on $\R_+$ with $q'(s;\cdot,\cdot) > 0$ for all $s \in \R_+.$ Therefore, $q^{-1}$ is strongly increasing and differentiable on $[0, q_{\max})$ and we can take $g(t) = q^{-1}(t;\norm{\nabla f(x)})$ for all $t \in [0, q_{\max}).$ One can check that $g(t)$ is a solution of \eqref{eq:diff_eq}. Let us define $p(t) \eqdef v(t) - g(t).$ 
    Then $p(0) = 0$ and 
    \begin{align*}
      p'(t) = (v(t) - g(t))' \leq \ell(\norm{\nabla f(x)} + v(t)) - \ell(\norm{\nabla f(x)} + g(t)).
    \end{align*}
    Let us fix any $b \in [0, q_{\max}).$ Since $\ell$ is locally Lipchitz, there exists $M \geq 0$ such that
    \begin{align*}
      p'(t) \leq M \norm{h} (v(t) - g(t)) = M \norm{h} p(t)
    \end{align*}
    for all $t \in [0, b]$ because $v(t)$ and $g(t)$ are bounded on $[0, b].$ Using Gr{\"o}nwall’s lemma \citep{gronwall1919note}, we can conclude that $p(t) \leq p(0) \exp(M t) = 0.$ Thus $v(t) \leq g(t) = q^{-1}(t;\norm{\nabla f(x)})$ for all $t \in [0, b].$ The last inequality holds for all $t \in [0, q_{\max})$ because $b$ is an arbitrary value from $[0, q_{\max}).$ Finally, using \eqref{eq:WIwoRoO}, we get \eqref{eq:gen_smooth_old}.
\end{proof}

\section{Proof of Lemma~\ref{lemma:second_alt}}

\begin{proof}
  If $x = y,$ then the lemma holds, since $q^{-1}$ is non-negative. Otherwise, using Taylor expansion, we get
  \begin{align*}
    f(y) 
    &= f(x) + \inp{\nabla f(x)}{y - x} + \int_{0}^{1} \inp{\nabla f(x + \tau (y - x)) - \nabla f(x)}{y - x} d \tau \\
    &\leq f(x) + \inp{\nabla f(x)}{y - x} + \norm{y - x} \int_{0}^{1} \norm{\nabla f(x + \tau (y - x)) - \nabla f(x)} d \tau.
  \end{align*}
  Due to Lemma~\ref{lemma:first_alt}:
  \begin{align*}
    f(y) 
    &\leq f(x) + \inp{\nabla f(x)}{y - x} + \norm{y - x} \int_{0}^{1} q^{-1}(\tau \norm{y - x};\norm{\nabla f(x)}) d \tau \\
    &= f(x) + \inp{\nabla f(x)}{y - x} + \int_{0}^{\norm{y - x}} q^{-1}(\tau;\norm{\nabla f(x)}) d \tau.
  \end{align*}
  where we changed variables in the integral.
\end{proof}

\section{Auxiliary Lemmas}

We need this auxiliary lemma to find the optimal choice of $\gamma_k$ in Algorithm~\ref{alg:main}, and in Lemma~\ref{lemma:smooth_convex}.

\begin{lemma}
  \label{lemma:opt_params_v2}
  For all $z \in \R^d$ and $s \geq 0,$ the term
  \begin{align*}
    U \equiv U(t,h) \eqdef \inp{z}{t h} + \int_{0}^{t} q^{-1}(\tau; s) d \tau
  \end{align*}
  under the constraints $\norm{h} = 1$ and $t \in [0, q_{\max}(s))$ is minimized with $t^* = q(\norm{z}; s) = \norm{z} \int_{0}^{1} \frac{d v}{\ell(s + \norm{z} v)}$ and $h^* = - \frac{z}{\norm{z}}$ if $z \neq 0,$ and with $t^* = 0$ and any $h^* \in \R^d$ such that $\norm{h^*} = 1$ if $z = 0.$ With the optimal $t^*$ and $h^*,$ the term $U$ equals
  \begin{align*}
    U^* \eqdef - \norm{z}^2 \int_{0}^{1} \frac{1 - v}{\ell(s + \norm{z} v)} d v.
  \end{align*}
\end{lemma}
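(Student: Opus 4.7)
The plan is to decouple the minimization over the direction $h$ from the minimization over the scalar $t$, since only the inner product $\inp{z}{th}$ depends on $h$. With $t \geq 0$ fixed, Cauchy--Schwarz gives $\inp{z}{th} \geq -t\norm{z}$ with equality iff $h = -z/\norm{z}$ (assuming $z \neq 0$; if $z = 0$, $h$ is immaterial). After this step, I am left with a one-dimensional convex problem:
\begin{equation*}
  \min_{t \in [0, q_{\max}(s))} \phi(t), \qquad \phi(t) \eqdef -t\norm{z} + \int_{0}^{t} q^{-1}(\tau; s)\,d\tau.
\end{equation*}

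Next I would differentiate: by the fundamental theorem of calculus, $\phi'(t) = -\norm{z} + q^{-1}(t;s)$. Since $q^{-1}$ is strictly increasing (Proposition~\ref{prop:q}), $\phi$ is strictly convex, so the unique minimizer is the critical point $t^*$ solving $q^{-1}(t^*;s) = \norm{z}$, i.e. $t^* = q(\norm{z};s)$. I then need to confirm $t^* \in [0, q_{\max}(s))$: this is automatic from the definition $q_{\max}(s) = \sup_{u \geq 0} q(u;s)$ and the fact that $q(\norm{z};s)$ is a finite value in the image of $q$. The substitution $v = u/\norm{z}$ (for $z \neq 0$) in the integral defining $q$ rewrites $t^* = \norm{z}\int_0^1 dv/\ell(s+\norm{z}v)$, matching the claimed form. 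The case $z = 0$ is trivial: $\phi(t) = \int_0^t q^{-1}(\tau;s)\,d\tau \geq 0$, minimized at $t^* = 0$ with any unit $h^*$.

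Finally, I would compute $U^*$. Substituting $h = h^*$ and $t = t^*$,
\begin{equation*}
  U^* = -\norm{z}\, q(\norm{z};s) + \int_{0}^{q(\norm{z};s)} q^{-1}(\tau;s)\,d\tau.
\end{equation*}
For the integral I would perform the change of variables $\tau = q(u;s)$, so that $d\tau = du/\ell(s+u)$ and $q^{-1}(\tau;s) = u$, yielding
\begin{equation*}
  \int_{0}^{q(\norm{z};s)} q^{-1}(\tau;s)\,d\tau = \int_{0}^{\norm{z}} \frac{u}{\ell(s+u)}\,du = \norm{z}^2 \int_{0}^{1} \frac{v}{\ell(s+\norm{z}v)}\,dv,
\end{equation*}
after one more rescaling $u = \norm{z}v$. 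Combining with $-\norm{z}\,q(\norm{z};s) = -\norm{z}^2\int_0^1 dv/\ell(s+\norm{z}v)$ gives the advertised formula $U^* = -\norm{z}^2\int_0^1 (1-v)/\ell(s+\norm{z}v)\,dv$.

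There is no serious obstacle here; the only thing to be careful about is the change of variables $\tau = q(u;s)$, which is legitimate because Proposition~\ref{prop:q} gives $q$ as a strictly increasing $C^1$ bijection from $[0,\infty)$ onto $[0, q_{\max}(s))$ with derivative $q'(u;s) = 1/\ell(s+u) > 0$. Everything else is Cauchy--Schwarz, a one-variable convex minimization, and a change of variables.
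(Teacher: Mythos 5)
Your proposal is correct and follows essentially the same route as the paper's proof: minimize over $h$ via Cauchy--Schwarz, reduce to a one-dimensional problem in $t$, locate the critical point via the monotonicity of $q^{-1}$, and evaluate $U^*$ by the change of variables $\tau = q(u;s)$. The only cosmetic difference is that you invoke strict convexity of $\phi$ while the paper observes directly that $\phi'$ is increasing and nonpositive at $t=0$; these are the same observation.
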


\begin{proof}
  The term $U$ depends on $h$ only in $\inp{z}{t h}.$ Since $\norm{h} = 1,$ the term is minimized with $h^* = -\frac{z}{\norm{z}}$ if $z \neq 0,$ and with any $h^* \in \R^d$ such that $\norm{h^*} = 1$ if $z = 0.$ In both cases, we get
  \begin{align*}
    U = - t \norm{z} + \int_{0}^{t} q^{-1}(\tau; s) d \tau
  \end{align*}
  with an optimal choice of $h.$ Next, we can take the derivative w.r.t. $t$ and obtain
  \begin{align*}
    U'_t =  -\norm{z} + q^{-1}(t; s).
  \end{align*}
  $U'_t$ is strongly increasing because $q^{-1}$ is strongly increasing (see Definition~\ref{def:q_function}). Moreover $U'_t \leq 0$ when $t = 0.$
  Thus, there exists the optimal $t^*$ defined by the equation
  \begin{align}
    \label{eq:upJSfnIazsYdZW}
    &-\norm{z} + q^{-1}(t^*; s) = 0 \Leftrightarrow t^* = q(\norm{z}; s).
  \end{align}
  If $\norm{z} = 0,$ then $t^* = 0.$ Otherwise,
  \begin{align*}
    t^*  = \int_{0}^{\norm{z}} \frac{d v}{\ell(s + v)} = \norm{z} \int_{0}^{1} \frac{d v}{\ell(s + \norm{z} v)}
  \end{align*} For this choice of $t^*$ and $h^*,$ using the change of variables $p = q^{-1}(\tau; s)$, we get
  \begin{align*}
    U &= - q(\norm{z}; s) \norm{z} + \int_{0}^{q(\norm{z}; s)} q^{-1}(\tau; s) d \tau = - q(\norm{z}; s) \norm{z} + \int_{0}^{\norm{z}} p d q(p; s) \\
    &\overset{\eqref{eq:q_func}}{=} - \norm{z} \int_{0}^{\norm{z}} \frac{1}{\ell(s + v)} d v + \int_{0}^{\norm{z}} \frac{v}{\ell(s + v)} d v = - \norm{z}^2 \int_{0}^{1} \frac{1 - v}{\ell(s + \norm{z} v)} d v.
  \end{align*}
\end{proof}

We will require the following lemma to ensure that Algorithm~\ref{alg:main} is well-defined. This lemma says we can safely do a step in a direction with a step size $t \in [0, q_{\max}(\norm{\nabla f(x)})).$

\begin{lemma}
  \label{lemma:aux_2}
  For a fixed $x \in \mathcal{X},$ the point $y = x + t h \in \mathcal{X}$ for all $t \in [0, q_{\max}(\norm{\nabla f(x)}))$ and $h \in \R^d$ such that $\norm{h} = 1.$
\end{lemma}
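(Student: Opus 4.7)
The plan is to use a continuation/maximal-interval argument together with the gradient bound already established in the proof of Lemma~\ref{lemma:first_alt}. Define
\[
T^{*} \eqdef \sup\{ t \geq 0 \,:\, x + \tau h \in \mathcal{X} \text{ for all } \tau \in [0, t]\}.
\]
Because $x \in \mathcal{X}$ and $\mathcal{X}$ is open, there is a ball around $x$ in $\mathcal{X}$, so $T^{*} > 0$. The goal is to prove $T^{*} \geq q_{\max}(\norm{\nabla f(x)})$, which immediately implies the lemma.

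I would argue by contradiction: assume $T^{*} < q_{\max}(\norm{\nabla f(x)})$. For any $\tau \in [0, T^{*})$, the segment $\{x + s h \,:\, s \in [0, \tau]\}$ lies in $\mathcal{X}$ (by the definition of $T^{*}$ together with convexity of $\mathcal{X}$), so the proof of Lemma~\ref{lemma:first_alt} applies verbatim and yields
\[
\norm{\nabla f(x + \tau h) - \nabla f(x)} \leq q^{-1}(\tau; \norm{\nabla f(x)}) \leq q^{-1}(T^{*}; \norm{\nabla f(x)}) \eqdef C_{1} < \infty,
\]
using that $q^{-1}$ is increasing and finite on $[0, q_{\max})$. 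Hence $\norm{\nabla f(x + \tau h)} \leq \norm{\nabla f(x)} + C_{1} \eqdef C$ uniformly for $\tau \in [0, T^{*})$.

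Next, I would use this bound to show that $f$ has a finite limit along the ray as $\tau \to T^{*}$. For $0 \leq \tau_1 < \tau_2 < T^{*}$, the fundamental theorem of calculus gives
\[
\abs{f(x + \tau_{2} h) - f(x + \tau_{1} h)} = \left| \int_{\tau_{1}}^{\tau_{2}} \inp{\nabla f(x + s h)}{h} ds \right| \leq C (\tau_{2} - \tau_{1}),
\]
so $\tau \mapsto f(x + \tau h)$ is Lipschitz on $[0, T^{*})$ and admits a finite limit as $\tau \uparrow T^{*}$. Since $x + T^{*} h$ lies in the closure of $\mathcal{X}$, continuity of $f$ on this closure forces $f(x + T^{*} h)$ to equal this finite limit; in particular, $f(x + T^{*} h) < \infty$, i.e., $x + T^{*} h \in \mathcal{X}$.

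Finally, since $\mathcal{X}$ is open, there exists $\varepsilon > 0$ with $x + (T^{*} + s) h \in \mathcal{X}$ for all $s \in [0, \varepsilon]$, contradicting the definition of $T^{*}$ as a supremum. The main obstacle is essentially bookkeeping: one must be careful that the bound from Lemma~\ref{lemma:first_alt} is used only on subintervals where the segment is already known to lie inside $\mathcal{X}$, and that the continuity hypothesis on the closure of $\mathcal{X}$ is strong enough to rule out the limit value being $\infty$. Both are handled by the two-step approach above—bound the gradient first, deduce a finite function-value limit, then invoke continuity on the closure and openness of $\mathcal{X}$.
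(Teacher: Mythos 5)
Your argument is correct and reaches the same conclusion as the paper's proof, but by a genuinely different route. The paper's proof also sets up a contradiction around the supremal parameter $\bar{\mu}$ (equivalently, your $T^*$) and concludes by showing that $f$ at the boundary point is finite; however, the paper gets the finiteness of $f$ at the limit point directly from Lemma~\ref{lemma:second_alt}, which gives an explicit upper bound on $f(y(\mu))$ in terms of $f(x)$, $\inp{\nabla f(x)}{y(\mu)-x}$, and the integral of $q^{-1}$, and then passes to the limit using that this upper bound is continuous in $\mu$ (finiteness of $q_{\max}$ plays the same role as in your argument). You instead establish a uniform bound on $\norm{\nabla f(x+\tau h)}$ via Lemma~\ref{lemma:first_alt}, deduce that $\tau \mapsto f(x+\tau h)$ is Lipschitz on $[0,T^*)$ by the fundamental theorem of calculus, and extract the finite limit from Lipschitz continuity. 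Both approaches then invoke continuity of $f$ on the closure of $\mathcal{X}$ and openness of $\mathcal{X}$ to derive the contradiction. Your version is somewhat more elementary in that it relies only on Lemma~\ref{lemma:first_alt} and a Lipschitz estimate rather than on the second descent-type inequality; the paper's version is slightly shorter since it reuses the upper bound from Lemma~\ref{lemma:second_alt} in one step. One cosmetic remark: since for $\tau < T^*$ you already know $x+\tau h \in \mathcal{X}$ and $\tau < q_{\max}$, you can invoke Lemma~\ref{lemma:first_alt} itself rather than ``its proof verbatim.''
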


The proof is a bit technical and can be skipped by the reader if $\mathcal{X} = \R^d.$

\begin{proof}
Let us define $y(\mu) \eqdef x + \mu t h$ for all $\mu \in [0, 1].$ Note that $y(1) = y$ and $y(0) = x \in \mathcal{X}.$ 
Using proof by contradiction, assume that $y(1) \not\in \mathcal{X}.$ Then
\begin{align*}
  \bar{\mu} \eqdef \sup \{\mu \geq 0 \,|\, y(\mu) \in \mathcal{X}\} \in [0, 1],
\end{align*}
$y(\bar{\mu}) \not\in \mathcal{X},$ and $y(\bar{\mu})$ belongs to the closure of $\mathcal{X}$ because $\mathcal{X}$ is open convex and $f$ is continuous on the closure of $\mathcal{X}.$ Next, $y(\mu) \in \mathcal{X}$ for all $\mu < \bar{\mu}$ and 
\begin{align}
  \label{eq:tGAbCPGDudDqtktG}
  f(y(\mu)) \leq f(x) + \mu t \inp{\nabla f(x)}{h} + \int_{0}^{\mu t} q^{-1}(\tau; \norm{\nabla f(x)}) d \tau
\end{align}
for all $\mu < \bar{\mu}$ due to Lemma~\ref{lemma:second_alt}. Since the r.h.s.\,is continuous for all $\mu \in [0, 1]$ because $t \in [0, q_{\max}(\norm{\nabla f(x)})),$ we can take a sequence $\{\mu_n\}$ such that $\mu_n < \bar{\mu}$ and $\lim_{n \to \infty} \mu_n = \bar{\mu}.$ Thus
\begin{equation}
  \label{eq:IAuMBz}
\begin{aligned}
  &\lim_{n \to \infty} \left[f(x) - \mu_n t \inp{\nabla f(x)}{h} + \int_{0}^{\mu_n t} q^{-1}(\tau;\norm{\nabla f(x)}) d \tau\right] \\
  &=f(x) - \bar{\mu} t \inp{\nabla f(x)}{h} + \int_{0}^{\bar{\mu} t} q^{-1}(\tau;\norm{\nabla f(x)}) d \tau < \infty.
\end{aligned}
\end{equation}
Since $f$ is continuous on the closure of $\mathcal{X},$ we have
\begin{align*}
  f(y(\bar{\mu})) = \lim_{n \to \infty} f(y(\mu_n)) \overset{\eqref{eq:tGAbCPGDudDqtktG}, \eqref{eq:IAuMBz}}{<} \infty,
\end{align*}
which contradicts $y(\bar{\mu}) \not\in \mathcal{X}.$
\end{proof}

\section{Proof of Corollary~\ref{cor:main_new}}

\begin{proof}
Corollary~\ref{cor:main_new} follows from Lemma~\ref{lemma:opt_params_v2} with $s = \norm{\nabla f(x)},$ $z = \nabla f(x),$ and $y^* = x + t^* h^*.$ It is only left to show that $y^* \in \mathcal{X},$ which follows from Lemma~\ref{lemma:aux_2}.
\end{proof}

\section{Proof of Theorem~\ref{thm:main_theorem}}

\begin{proof}
  Notice that Algorithm~\ref{alg:main} uses the optimal gradient descent rule from Corollary~\ref{cor:main_new}. Thus, for Algorithm~\ref{alg:main}, we have
  \begin{align}
    f(x_{k+1}) 
    &\leq f(x_{k}) - \norm{\nabla f(x_{k})}^2 \int_{0}^{1} \frac{1 - v}{\ell(\norm{\nabla f(x_{k})} + \norm{\nabla f(x_{k})} v)} d v.
    \label{eq:ReldrYKDXKWPGDPMW}
  \end{align}
  Corollary~\ref{cor:main_new} automatically ensures that $x_{k+1} \in \mathcal{X}$ if $x_{k} \in \mathcal{X}.$ Thus Algorithm~\ref{alg:main} is well-defined because $x_{0} \in \mathcal{X}.$ Note that
  \begin{align*}
    &\int_{0}^{1} \frac{1 - v}{\ell(\norm{\nabla f(x_{k})} + \norm{\nabla f(x_{k})} v)} d v \geq \int_{0}^{1/2} \frac{1 - v}{\ell(\norm{\nabla f(x_{k})} + \norm{\nabla f(x_{k})} v)} d v \\
    &\geq \frac{1}{2} \int_{0}^{1/2} \frac{1}{\ell(\norm{\nabla f(x_{k})} + \norm{\nabla f(x_{k})} v)} d v \geq \frac{1}{4} \int_{0}^{1} \frac{1}{\ell(\norm{\nabla f(x_{k})} + \norm{\nabla f(x_{k})} v)} d v = \frac{\gamma_k}{4}.
  \end{align*}
  where the last inequality because $\ell$ is non-decreasing. Substituting this to \eqref{eq:ReldrYKDXKWPGDPMW}, we get
  \begin{align*}
    f(x_{k+1}) \leq f(x_{k}) - \frac{\gamma_k}{4}\norm{\nabla f(x_{k})}^2.
  \end{align*}
  Summing this inequality for $k = 0, \dots, T - 1,$ dividing the result by $T,$ taking the minimum over $k \in \{0, \dots, T - 1\},$ and using Assumption~\ref{ass:lower_bound} and Remark~\ref{remark:step_size}, one can get \eqref{eq:main_conv}.
\end{proof}

\section{Derivation of the Rate from Section~\ref{sec:nc_rho_small}}
\label{sec:der1}

Let us now apply Theorem~\ref{thm:main_theorem} with $\ell(s) = \ell(s) = L_0 + L_1 s^{\rho}$ and $0 \leq \rho \leq 2:$

\begin{align}
    \label{eq:CETSKxZMOPZSFrj2}
    \min_{k \in \{0, \dots, T-1\}} \frac{\norm{\nabla f(x_{k})}^2}{L_0 + 2^\rho L_1 \norm{\nabla f(x_k)}^{\rho}} \leq \frac{4 \Delta}{T}.
\end{align}

\begin{remark}
Due to Theorem~\ref{thm:main_theorem} and Remark~\ref{remark:step_size}, the term $2^\rho$ is not tight and could potentially be improved. However, achieving this improvement would require calculating the integral in $\gamma_k$
explicitly instead, which results in ``lengthy trigonometric formulas.''
\end{remark}
We continue with \eqref{eq:CETSKxZMOPZSFrj2} to get
\begin{align}
\label{eq:rwRTKYIwiBXnuPVJmLQY}
\min_{k \in \{0, \dots, T-1\}} \min\left\{\frac{\norm{\nabla f(x_{k})}^2}{L_0}, \frac{\norm{\nabla f(x_{k})}^{2 - \rho}}{2^\rho L_1}\right\} \leq \frac{8 \Delta}{T}
\end{align}
for all $\rho \geq 0$ because $2 \max\{x, y\} \geq x + y$ for all $x, y \geq 0.$ If $\rho \leq 2,$ then we can guarantee $\min_{k \in \{0, \dots, T-1\}} \norm{\nabla f(x_{k})}^2 \leq \varepsilon$ after at most 
\begin{align*}
    \max\left\{\frac{8 L_0 \Delta}{\varepsilon}, \frac{32 L_1 \Delta}{\varepsilon^{(2 - \rho) / 2}}\right\}
\end{align*}
iterations.

\section{Derivation of the Rate from Section~\ref{sec:nonconvex_rho}}
\label{sec:der2}
Using \eqref{eq:rwRTKYIwiBXnuPVJmLQY}, either 
\begin{align*}
  \min_{k \in \{0, \dots, T-1\}} \norm{\nabla f(x_{k})}^2 \leq \frac{8 L_0 \Delta}{T} \quad \textnormal{ or } \quad \max_{k \in \{0, \dots, T-1\}} \norm{\nabla f(x_{k})}^{\rho - 2} \geq \frac{T}{2^{\rho + 3} L_1 \Delta}.
\end{align*}
We now require Assumption~\ref{ass:bounded_grad}. Since the gradients are bounded by $M,$ we can conclude that the method finds an $\varepsilon$--stationary after
\begin{align*}
  \max\left\{\frac{8 L_0 \Delta}{\varepsilon}, 64 L_1 \Delta (2 M)^{\rho - 2}\right\}
\end{align*}
iterations because $\max_{k \in \{0, \dots, T-1\}} \norm{\nabla f(x_{k})}^{\rho - 2} \leq M^{\rho - 2}.$

\section{Proof of Lemmas in the Convex World}

The following proof technique is based on the classical approaches by \citet{nesterov2018lectures}.
We start our proof by generalizing the inequalities $\frac{1}{2 L} \norm{\nabla f(x) - \nabla f(y)}^2 \leq f(x) - f(y) - \inp{\nabla f(y)}{x - y}$ and $\norm{\nabla f(x) - \nabla f(y)}^2 \leq L \inp{\nabla f(x) - \nabla f(y)}{x - y},$ which are true under $L$--smoothness \citep{nesterov2018lectures}.

\begin{lemma}
  \label{lemma:smooth_convex}
  For all $x, y \in \mathcal{X},$ if $f$ is $\ell$--smooth (Assumption~\ref{ass:gen_smooth}) and convex, then
  \begin{align}
    \label{eq:gen_1}
    \norm{\nabla f(x) - \nabla f(y)}^2 \int_{0}^{1} \frac{1 - v}{\ell(\norm{\nabla f(x)} + \norm{\nabla f(x) - \nabla f(y)} v)} d v \leq f(x) - f(y) - \inp{\nabla f(y)}{x - y}
  \end{align}
  and 
  \begin{equation}
  \label{eq:gen_2}
  \begin{aligned}
    &\inp{\nabla f(x) - \nabla f(y)}{x - y} \\
    &\geq \norm{\nabla f(x) - \nabla f(y)}^2 \int_{0}^{1} \left(\frac{1 - v}{\ell(\norm{\nabla f(x)} + \norm{\nabla f(x) - \nabla f(y)} v)} + \frac{1 - v}{\ell(\norm{\nabla f(y)} + \norm{\nabla f(x) - \nabla f(y)} v)}\right) d v.
  \end{aligned}
  \end{equation}
\end{lemma}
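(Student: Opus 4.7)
The plan is to mimic the classical derivation of cocoercivity for $L$--smooth convex functions, adapting it to the generalized upper bound of Lemma~\ref{lemma:second_alt}. Fix $y \in \mathcal{X}$ and introduce the auxiliary function
$$\phi(z) \eqdef f(z) - \inp{\nabla f(y)}{z - y}.$$
Since $f$ is convex and twice differentiable on $\mathcal{X}$, so is $\phi$, with $\nabla \phi(z) = \nabla f(z) - \nabla f(y)$ and $\nabla^2 \phi = \nabla^2 f$. In particular $\nabla \phi(y) = 0$, so $y$ is a global minimizer of $\phi$ on $\mathcal{X}$.

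For any $h$ with $\norm{h} = 1$ and any $t \in [0, q_{\max}(\norm{\nabla f(x)}))$, Lemma~\ref{lemma:second_alt} applied to $f$ at $x$ yields
$$f(x + th) \leq f(x) + t\inp{\nabla f(x)}{h} + \int_0^t q^{-1}(\tau; \norm{\nabla f(x)}) d\tau,$$
and subtracting $\inp{\nabla f(y)}{x + th - y}$ from both sides transforms this into
$$\phi(x + th) \leq \phi(x) + t\inp{\nabla f(x) - \nabla f(y)}{h} + \int_0^t q^{-1}(\tau; \norm{\nabla f(x)}) d\tau.$$
I would now minimize the right-hand side over $\norm{h} = 1$ and $t \in [0, q_{\max}(\norm{\nabla f(x)}))$ by invoking Lemma~\ref{lemma:opt_params_v2} with $z = \nabla f(x) - \nabla f(y)$ and $s = \norm{\nabla f(x)}$; Lemma~\ref{lemma:aux_2} ensures that the resulting minimizer $x + t^* h^*$ stays in $\mathcal{X}$.

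Because $y$ is a global minimizer of $\phi$, we have $\phi(y) \leq \phi(x + t^* h^*)$, so combining with the upper bound and unpacking $\phi(y) - \phi(x) = f(y) - f(x) + \inp{\nabla f(y)}{x - y}$ gives exactly \eqref{eq:gen_1}. Inequality \eqref{eq:gen_2} then follows by writing \eqref{eq:gen_1} a second time with $x$ and $y$ swapped and adding the two estimates: the right-hand sides collapse to $\inp{\nabla f(x) - \nabla f(y)}{x - y}$, while the left-hand sides combine into the symmetric integral in \eqref{eq:gen_2}.

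The main conceptual obstacle is that $\phi$ does not automatically inherit the $\ell$--smoothness of $f$, since the gradient norm that controls $\norm{\nabla^2 \phi}$ is $\norm{\nabla f(z)}$ and not $\norm{\nabla \phi(z)}$; consequently Lemma~\ref{lemma:second_alt} cannot be applied directly to $\phi$. The fix is to apply Lemma~\ref{lemma:second_alt} to $f$ and transport the estimate to $\phi$ through the linear shift, which is precisely why the argument of $\ell$ in the final bound takes the asymmetric form $\norm{\nabla f(x)} + \norm{\nabla f(x) - \nabla f(y)} v$ rather than something symmetric in $x$ and $y$; restoring symmetry is exactly what adding the two swapped inequalities achieves in \eqref{eq:gen_2}.
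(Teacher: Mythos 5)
Your proposal is correct and follows essentially the same route as the paper: introduce the shifted function $\phi(z) = f(z) - \inp{\nabla f(y)}{z}$ (up to an irrelevant constant), use convexity to identify $y$ as its global minimizer, bound $\phi(x+th)$ by applying Lemma~\ref{lemma:second_alt} to $f$ and shifting, minimize the bound over $(t,h)$ via Lemma~\ref{lemma:opt_params_v2}, and finally symmetrize by swapping $x$ and $y$ and adding to obtain \eqref{eq:gen_2}. Your closing remark correctly identifies the one subtle point — that $\phi$ itself is not $\ell$-smooth, which forces the detour through $f$ and explains the asymmetric argument of $\ell$ in \eqref{eq:gen_1} — and this matches the implicit logic of the paper's proof.
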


\begin{proof}
  As in \citep{nesterov2018lectures}, we define the function $\phi(y) \eqdef f(y) - \inp{\nabla f(x_0)}{y}$ for a fixed $x_0 \in \mathcal{X}.$ For all $x \in \mathcal{X},$ we have
  \begin{align*}
    \phi(x_0) 
    &= \min_{y \in \mathcal{X}} \phi(y) = \min_{y \in \mathcal{X}} \left\{f(y) - \inp{\nabla f(x_0)}{y} \right\} \\
    &\overset{\textnormal{Lemma~\ref{lemma:second_alt}}}{\leq} \min_{y \in \mathcal{X}} \left\{f(x) + \inp{\nabla f(x)}{y - x} + \int_{0}^{\norm{y - x}} q^{-1}(\tau; \norm{\nabla f(x)}) d \tau - \inp{\nabla f(x_0)}{y} \right\} \\
    &= \min_{y \in \mathcal{X}} \left\{f(x) + \inp{\nabla f(x) - \nabla f(x_0)}{y - x} + \int_{0}^{\norm{y - x}} q^{-1}(\tau; \norm{\nabla f(x)}) d \tau - \inp{\nabla f(x_0)}{x} \right\} \\
    &= \phi(x) + \min_{y \in \mathcal{X}} \left\{\inp{\nabla f(x) - \nabla f(x_0)}{y - x} + \int_{0}^{\norm{y - x}} q^{-1}(\tau; \norm{\nabla f(x)}) d \tau\right\} \\
    &\leq \phi(x) + \min_{\norm{h} = 1, t \in [0, q_{\max}(\norm{\nabla f(x)}))} \left\{\inp{\nabla f(x) - \nabla f(x_0)}{t h} + \int_{0}^{t} q^{-1}(\tau; \norm{\nabla f(x)}) d \tau\right\}.
  \end{align*}
  Lemma~\ref{lemma:opt_params_v2} with $z = \nabla f(x) - \nabla f(x_0)$ and $s = \norm{\nabla f(x)}$ ensures that
  \begin{align*}
    \phi(x_0) 
    &\leq \phi(x) - \norm{\nabla f(x) - \nabla f(x_0)}^2 \int_{0}^{1} \frac{1 - v}{\ell(\norm{\nabla f(x)} + \norm{\nabla f(x) - \nabla f(x_0)} v)} d v.
  \end{align*}
  Thus, we get \eqref{eq:gen_1}. One can get \eqref{eq:gen_2} interchanging $x$ and $y$ in \eqref{eq:gen_1} and using \eqref{eq:gen_1} two times.
\end{proof}

\begin{lemma}
  \label{lemma:descent_lemma}
  Suppose that Assumptions~\ref{ass:gen_smooth} and \ref{ass:convex} hold. Then Algorithm~\ref{alg:main}
  guarantees that
  \begin{align*}
    \frac{1}{2}\norm{x_{k+1} - x_*}^2 \leq \frac{1}{2} \norm{x_{k} - x_*}^2 - \frac{f(x_k) - f(x_*)}{\ell(2 \norm{\nabla f(x_k)})}
  \end{align*}
  for all $k \geq 0.$
\end{lemma}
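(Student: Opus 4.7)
The plan is to run the standard ``one-step'' contraction argument for gradient descent in the convex case, but to replace the classical $L$-smooth cocoercivity estimate with the right instantiation of the generalized inequality \eqref{eq:gen_1} from Lemma~\ref{lemma:smooth_convex}. Writing $s_k \eqdef \norm{\nabla f(x_k)}$ and using the update rule $x_{k+1} = x_k - \gamma_k \nabla f(x_k),$ I first expand
\begin{align*}
\norm{x_{k+1} - x_*}^2 = \norm{x_k - x_*}^2 - 2 \gamma_k \inp{\nabla f(x_k)}{x_k - x_*} + \gamma_k^2 s_k^2.
\end{align*}
The goal is then to reshape the right-hand side into $\norm{x_k - x_*}^2 - \tfrac{2(f(x_k) - f(x_*))}{\ell(2 s_k)}.$ Well-definedness of $x_{k+1} \in \mathcal{X}$ is already guaranteed by Corollary~\ref{cor:main_new} (as used in the proof of Theorem~\ref{thm:main_theorem}).

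Next I apply \eqref{eq:gen_1} with the \emph{swapped} pair $x = x_*,$ $y = x_k,$ using $\nabla f(x_*) = 0.$ This gives the lower bound
\begin{align*}
\inp{\nabla f(x_k)}{x_k - x_*} \;\geq\; (f(x_k) - f(x_*)) + s_k^2 B_k, \qquad B_k \eqdef \int_0^1 \frac{1 - v}{\ell(s_k v)}\,d v,
\end{align*}
which is the analog of the classical bound $\inp{\nabla f(x_k)}{x_k - x_*} \geq f(x_k) - f(x_*) + \tfrac{1}{2L} s_k^2.$ Substituting this back leaves the residual $\gamma_k s_k^2 (\gamma_k - 2 B_k)$ that must be controlled in addition to the main convex term.

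The main obstacle I anticipate is verifying the clean inequality $\gamma_k \leq 2 B_k,$ which kills the residual. Fortunately this is short and relies only on monotonicity of $\ell.$ On the one hand, for $v \in [0,1]$ we have $\ell(s_k(1+v)) \geq \ell(s_k),$ so $\gamma_k \leq 1/\ell(s_k).$ On the other hand, $\ell(s_k v) \leq \ell(s_k)$ gives $B_k \geq \tfrac{1}{\ell(s_k)}\int_0^1 (1-v)\,dv = \tfrac{1}{2 \ell(s_k)}.$ Hence $2 B_k \geq 1/\ell(s_k) \geq \gamma_k,$ and the residual is $\leq 0.$

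Finally, I use the other half of the bound from Remark~\ref{remark:step_size}, namely $\gamma_k \geq 1/\ell(2 s_k)$ (which also follows from $s_k(1+v) \leq 2 s_k$ and monotonicity of $\ell$). Since $f(x_k) - f(x_*) \geq 0$ by Assumption~\ref{ass:convex}, this yields
\begin{align*}
- 2 \gamma_k (f(x_k) - f(x_*)) \;\leq\; - \frac{2(f(x_k) - f(x_*))}{\ell(2 s_k)}.
\end{align*}
Combining the three ingredients—expansion, the swapped \eqref{eq:gen_1} bound, and the two monotonicity estimates $\gamma_k \leq 2 B_k$ and $\gamma_k \geq 1/\ell(2 s_k)$—and dividing through by $2$ gives exactly the stated descent inequality.
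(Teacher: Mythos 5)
Your proof is correct and follows essentially the same route as the paper: expand $\norm{x_{k+1}-x_*}^2$, apply \eqref{eq:gen_1} with $x = x_*,\, y = x_k$ (using $\nabla f(x_*) = 0$), show the residual $\gamma_k s_k^2(\gamma_k - 2B_k)$ is nonpositive via monotonicity of $\ell$, and finish with $\gamma_k \geq 1/\ell(2s_k)$ from Remark~\ref{remark:step_size}. The only cosmetic difference is how you establish $\gamma_k \leq 2B_k$ (you pass both sides through $1/\ell(s_k)$, whereas the paper bounds the difference integral by $\int_0^1 (2v-1)/\ell(s_k v)\,dv \leq 0$), but this is the same monotonicity argument packaged differently.
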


\begin{proof}
  Due to the strategy from Alg.~\ref{alg:main}, we get
  \begin{align}
    \label{eq:lZQKMhi}
    \norm{x_{k+1} - x_*}^2 = \norm{x_{k} - x_*}^2 - 2 \gamma_k \inp{x_{k} - x_*}{\nabla f(x_k)} + \gamma_k^2 \norm{\nabla f(x_k)}^2.
  \end{align}
  We now consider the last two terms:
  \begin{align*}
    &- 2 \gamma_k \inp{x_{k} - x_*}{\nabla f(x_k)} + \gamma_k^2 \norm{\nabla f(x_k)}^2 \\
    &= 2 \gamma_k \left(- f(x_*) + f(x_k) + \inp{\nabla f(x_k)}{x_* - x_{k}} - f(x_k) + f(x_*)\right) + \gamma_k^2 \norm{\nabla f(x_k)}^2 \\
    &\overset{\eqref{eq:gen_1}}{\leq} \gamma_k \norm{\nabla f(x_k)}^2 \left(\gamma_k - 2 \int_{0}^{1} \frac{1 - v}{\ell(\norm{\nabla f(x_k)} v)} d v\right) - 2 \gamma_k \left(f(x_k) - f(x_*)\right).
  \end{align*}
  Note that
  \begin{align*}
    &\gamma_k - 2 \int_{0}^{1} \frac{1 - v}{\ell(\norm{\nabla f(x_k)} v)} d v =\int_{0}^{1} \frac{d v}{\ell(\norm{\nabla f(x_k)} + \norm{\nabla f(x_k)} v)} - 2 \int_{0}^{1} \frac{1 - v}{\ell(\norm{\nabla f(x_k)} v)} d v \\
    &\leq\int_{0}^{1} \frac{(2 v - 1)d v}{\ell(\norm{\nabla f(x_k)} v)} \leq 0
  \end{align*}
  because $\ell$ is non-decreasing. Thus
  \begin{align*}
    &- 2 \gamma_k \inp{x_{k} - x_*}{\nabla f(x_k)} + \gamma_k^2 \norm{\nabla f(x_k)}^2 \leq - 2 \gamma_k \left(f(x_k) - f(x_*)\right).
  \end{align*}
  Substituting this inequality to \eqref{eq:lZQKMhi}, we get
  \begin{align*}
    \norm{x_{k+1} - x_*}^2 \leq \norm{x_{k} - x_*}^2 - 2 \gamma_k \left(f(x_k) - f(x_*)\right)
  \end{align*}
  and 
  \begin{align*}
    \frac{1}{2}\norm{x_{k+1} - x_*}^2 \leq \frac{1}{2} \norm{x_{k} - x_*}^2 - \frac{f(x_k) - f(x_*)}{\ell(2 \norm{\nabla f(x_k)})}
  \end{align*}
  due to Remark~\ref{remark:step_size}.
\end{proof}

\section{Proof of Theorem~\ref{thm:convex_increasing}}

The following proof is based on the techniques from \citep{vankov2024optimizing}.

\begin{proof}
  Using Lemma~\ref{lemma:descent_lemma}, we obtain
  \begin{align*}
    \frac{1}{2}\norm{x_{k+1} - x_*}^2 \leq \frac{1}{2} \norm{x_{k} - x_*}^2 - \frac{f(x_k) - f(x_*)}{\ell(2 \norm{\nabla f(x_k)})}
  \end{align*}
  At the same time, using Theorem~\ref{thm:main_theorem} and Remark~\ref{remark:step_size}, we obtain
  \begin{align*}
    \frac{\norm{\nabla f(x_{k})}^2}{4 \ell(2 \norm{\nabla f(x_k)})} \leq \frac{\gamma_k}{4}\norm{\nabla f(x_{k})}^2 \leq f(x_{k}) - f(x_*)
  \end{align*}
  and
  \begin{align*}
    \frac{\norm{\nabla f(x_{k})}^2}{\ell(2 \norm{\nabla f(x_k)})} \leq 4 (f(x_{k}) - f(x_*)).
  \end{align*}
  Defining $f_k \eqdef f(x_{k}) - f(x_*)$ and $\psi_2(x) \eqdef \frac{x^2}{\ell(2 x)},$ we get\footnote{This is the place we use that the function $\psi_2(x) = \frac{x^2}{\ell(2 x)}$ is strictly increasing and $\psi_2(\infty) = \infty.$} $\norm{\nabla f(x_{k})}^2 \leq \psi_2^{-1}(4 f_k)$ and
  \begin{align*}
    \frac{1}{2}\norm{x_{k+1} - x_*}^2 
    &\leq \frac{1}{2} \norm{x_{k} - x_*}^2 - \frac{f_k}{\ell(2 \psi_2^{-1}(4 f_k))} \\
    &= \frac{1}{2} \norm{x_{k} - x_*}^2 - \frac{\left(\psi_2^{-1}(4 f_k)\right)^2 \times f_k}{\left(\psi_2^{-1}(4 f_k)\right)^2 \times \ell(2 \psi_2^{-1}(4 f_k))}.
  \end{align*}
  where the first inequality because $\ell$ is non-decreasing. The equality $\frac{\left(\psi_2^{-1}(4 f_k)\right)^2}{\ell(2 \psi_2^{-1}(4 f_k))} = \psi_2(\psi_2^{-1}(4 f_k)) = 4 f_k$ simplifies the last inequality to 
  \begin{align*}
    \frac{1}{2}\norm{x_{k+1} - x_*}^2 
    &\leq \frac{1}{2} \norm{x_{k} - x_*}^2 - \frac{4 (f_k)^2}{\left(\psi_2^{-1}(4 f_k)\right)^2}.
  \end{align*}
  Summing the inequality for $k \in \{0, \dots, T - 1\},$ we obtain
  \begin{align*}
    \sum_{k=0}^{T - 1} \frac{4 (f_k)^2}{\left(\psi_2^{-1}(4 f_k)\right)^2} \leq \frac{1}{2} \norm{x_{0} - x_*}^2,
  \end{align*}
  \begin{align*}
    \min_{k \in \{0, \dots, T-1\}}\frac{(f_k)^2}{\left(\psi_2^{-1}(4 f_k)\right)^2} \leq \frac{\norm{x_{0} - x_*}^2}{8 T},
  \end{align*}
  and
  \begin{align*}
    \min_{k \in \{0, \dots, T-1\}}\frac{f_k}{\psi_2^{-1}(4 f_k)} \leq \frac{\norm{x_{0} - x_*}}{2 \sqrt{T}}.
  \end{align*}
  For all $x, t > 0,$ notice that 
  \begin{align*}
    \frac{x}{\psi_2^{-1}(4 x)} = t \Leftrightarrow \frac{x}{t} = \psi_2^{-1}(4 x) \Leftrightarrow \psi_2\left(\frac{x}{t}\right) = 4 x \Leftrightarrow \frac{\frac{x^2}{t^2}}{\ell\left(\frac{x}{t}\right)} = 4 x \Leftrightarrow \frac{x}{4 \ell\left(\frac{x}{t}\right)} = t^2.
  \end{align*}
  Thus
  \begin{align*}
    \min_{k \in \{0, \dots, T-1\}}\frac{f_k}{\ell\left(\frac{2 \sqrt{T} f_k}{\norm{x_{0} - x_*}}\right)} \leq \frac{\norm{x_{0} - x_*}^2}{T}.
  \end{align*}
\end{proof}

\section{Derivation of the Rate from Section~\ref{sec:convex_rho_smooth}}
\label{sec:der3}

Let us define $f_k \eqdef f(x_{k}) - f(x_*).$ Then $f_{T} = \min_{k \in \{0, \dots, T\}} f_k$ and $\Delta = f_0.$ For $(\rho, L_0, L_1)$--smoothness, we should take $\ell(s) = L_0 + L_1 s^\rho,$ use Theorem~\ref{thm:convex_increasing}, and get
  $\min\limits_{k \in \{0, \dots, T\}}\frac{f_k}{L_0 + L_1 \left(\frac{2 \sqrt{T + 1} f_k}{R}\right)^{\rho}} \leq \frac{R^2}{T + 1}.$
Using the inequality $x + y \leq 2 \max\{x, y\},$ we obtain
  $\min\limits_{k \in \{0, \dots, T\}}\min\left\{\frac{f_k}{2 L_0}, \frac{1}{2 L_1 f_k^{\rho - 1}\left(\frac{2 \sqrt{T + 1}}{R}\right)^{\rho}}\right\} \leq \frac{R^2}{T + 1}.$
For $\rho \leq 1$, the method finds $\varepsilon$--solution after at most
\begin{align*}
  T \eqdef \left\lceil\max\left\{\frac{2 L_0 R^2}{\varepsilon}, \frac{16 L_1^{2 / (2 - \rho)} R^2}{\varepsilon^{2 (1 - \rho) / (2 - \rho)}}\right\}\right\rceil
\end{align*}
iterations. One can easily show this by swapping $\min\limits_{k \in {0, \dots, T}} \min$, noticing that the terms under the min operations are non-decreasing functions of $f_k$ and $f_T = \min\limits_{k \in {0, \dots, T}} f_k,$ and considering three cases: (i) $\frac{f_T}{2 L_0} \leq \frac{1}{2 L_1 f_T^{\rho - 1}\left(\frac{2 \sqrt{T + 1}}{R}\right)^{\rho}},$ (ii) $\frac{f_T}{2 L_0} > \frac{1}{2 L_1 f_T^{\rho - 1}\left(\frac{2 \sqrt{T + 1}}{R}\right)^{\rho}}$ and $\rho = 1,$ and (iii) $\frac{f_T}{2 L_0} > \frac{1}{2 L_1 f_T^{\rho - 1}\left(\frac{2 \sqrt{T + 1}}{R}\right)^{\rho}}$ and $\rho < 1.$

For $1 < \rho < 2$, either
  $f_{T} \leq \frac{2 L_0 R^2}{T + 1}$
or
  $\frac{(T + 1)^{1 - \frac{\rho}{2}}}{L_1 2^{\rho + 1}R^{2 - \rho}} \leq \max\limits_{k \in \{0, \dots, T\}} f_k^{\rho - 1}.$
Since $f_k$ is decreasing (see Theorem~\ref{thm:main_theorem}), the second option implies
  $(T + 1)^{1 - \frac{\rho}{2}} \leq L_1 2^{\rho + 1}R^{2 - \rho} f_0^{\rho - 1},$
meaning the method finds $\varepsilon$--solution after at most
\begin{align*}
  \max\left\{\frac{2 L_0 R^2}{\varepsilon}, 16 L_1^{\frac{2}{2 - \rho}}R^{2} \Delta^{\frac{2 (\rho - 1)}{2 - \rho}}\right\}
\end{align*}
iterations.

\section{Proof of Theorem~\ref{thm:convex}}

\begin{proof}
  Using Lemma~\ref{lemma:descent_lemma}, we obtain
  \begin{align}
    \label{eq:nKqhMFwCNZcpMeKQcRqG}
    \frac{1}{2}\norm{x_{k+1} - x_*}^2 \leq \frac{1}{2} \norm{x_{k} - x_*}^2 - \frac{f(x_k) - f(x_*)}{\ell(2 \norm{\nabla f(x_k)})}
  \end{align}
  for all $k \geq 0.$ $\norm{\nabla f(x_k)}$ is decreasing due to Theorem~\ref{thm:decreas_norm}. We fix any $\bar{T} \geq 0.$ Thus, for all $k \geq \bar{T},$ we get
  \begin{align*}
    \frac{1}{2}\norm{x_{k+1} - x_*}^2 \leq \frac{1}{2} \norm{x_{k} - x_*}^2 - \frac{f(x_k) - f(x_*)}{\ell(2 \norm{\nabla f(x_{\bar{T}})})}
  \end{align*}
  Summing the last inequality for $k = \bar{T}, \dots, T$ and dividing by $T - \bar{T} - 1,$ we obtain
  \begin{align*}
    f(x_{T}) - f(x_*) \leq \frac{\ell(2 \norm{\nabla f(x_{\bar{T}})}) \norm{x_{\bar{T}} - x_*}^2}{2 (T - \bar{T} + 1)} \leq \frac{\ell(2 \norm{\nabla f(x_{\bar{T}})}) \norm{x_{0} - x_*}^2}{2 (T - \bar{T} + 1)},
  \end{align*}
  where use the inequalities $\norm{x_{0} - x_*}^2 \geq \norm{x_{\bar{T}} - x_*}^2 \geq \norm{x_{T} - x_*}^2 \geq 0$ due to \eqref{eq:nKqhMFwCNZcpMeKQcRqG}. For any $M \geq 0,$ taking $\bar{T}(M)$ such that $\norm{\nabla f(x_{\bar{T}})} \leq M,$ we get
  \begin{align*}
    f(x_{T}) - f(x_*) \leq \frac{\ell(2 M) \norm{x_{0} - x_*}^2}{2 (T - \bar{T}(M) + 1)}.
  \end{align*}
  Thus, after 
  \begin{align*}
    \bar{T}(M) + \frac{\ell(2 M) \norm{x_{0} - x_*}^2}{2 \varepsilon}
  \end{align*}
  iterations the inequality $f(x_{T}) - f(x_*) \leq \varepsilon$ holds. The final result holds since $M > 0$ is arbitrary.
\end{proof}

  \section{Proof of Theorem~\ref{thm:decreas_norm}}

  \begin{proof}
    We have
    \begin{align*}
      \norm{\nabla f(x_{k+1})}^2 
      &= \norm{\nabla f(x_{k})}^2 + 2 \inp{\nabla f(x_{k})}{\nabla f(x_{k+1}) - \nabla f(x_{k})} + \norm{\nabla f(x_{k+1}) - \nabla f(x_{k})}^2 \\
      &= \norm{\nabla f(x_{k})}^2 - \frac{2}{\gamma_k} \inp{\nabla f(x_{k+1}) - \nabla f(x_{k})}{x_{k+1} - x_{k}} + \norm{\nabla f(x_{k+1}) - \nabla f(x_{k})}^2.
    \end{align*}
    Lemma~\ref{lemma:first_alt} guarantees that 
    \begin{align*}
      \norm{\nabla f(x_{k+1}) - \nabla f(x_{k})} \leq q^{-1}(\gamma_k \norm{\nabla f(x_k)};\norm{\nabla f(x_k)}).
    \end{align*}
    Notice that 
    \begin{align*}
      \gamma_k \norm{\nabla f(x_k)} = \int_{0}^{1} \frac{d \norm{\nabla f(x_k)} v}{\ell(\norm{\nabla f(x_k)} + \norm{\nabla f(x_k)} v)} = \int_{0}^{\norm{\nabla f(x_k)}} \frac{d v}{\ell(\norm{\nabla f(x_k)} + v)} \overset{\textnormal{Def.}\ref{eq:q_func}}{=} q(\norm{\nabla f(x_k)};\norm{\nabla f(x_k)}).
    \end{align*}
    Thus $q^{-1}(\gamma_k \norm{\nabla f(x_k)};\norm{\nabla f(x_k)}) = \norm{\nabla f(x_k)}$ and 
    \begin{align}
      \label{eq:dJzohbySNgR}
      \norm{\nabla f(x_{k+1}) - \nabla f(x_{k})} \leq \norm{\nabla f(x_k)}.
    \end{align}
    Ignoring the first non-negative term in the integral of \eqref{eq:gen_2}, we obtain
    \begin{align*}
      \inp{\nabla f(x_{k+1}) - \nabla f(x_{k})}{x_{k+1} - x_{k}} 
      &\geq \norm{\nabla f(x_{k+1}) - \nabla f(x_{k})}^2 \int_{0}^{1} \frac{1 - v}{\ell(\norm{\nabla f(x_{k})} + \norm{\nabla f(x_{k+1}) - \nabla f
      (x_{k})} v)} d v \\
      &\overset{\eqref{eq:dJzohbySNgR}}{\geq} \norm{\nabla f(x_{k+1}) - \nabla f(x_{k})}^2 \int_{0}^{1} \frac{1 - v}{\ell(\norm{\nabla f(x_{k})} + \norm{\nabla f(x_k)} v)} d v.
    \end{align*}
    Therefore
    \begin{align*}
      \norm{\nabla f(x_{k+1})}^2 
      &\leq \norm{\nabla f(x_{k})}^2 - \frac{1}{\gamma_k} \left(2 \int_{0}^{1} \frac{1 - v}{\ell(\norm{\nabla f(x_{k})} + \norm{\nabla f(x_k)} v)} d v - \gamma_k \right)\norm{\nabla f(x_{k+1}) - \nabla f(x_{k})}^2 \\
      &= \norm{\nabla f(x_{k})}^2 - \frac{1}{\gamma_k} \left(\int_{0}^{1} \frac{1 - 2 v}{\ell(\norm{\nabla f(x_{k})} + \norm{\nabla f(x_k)} v)} d v\right)\norm{\nabla f(x_{k+1}) - \nabla f(x_{k})}^2
    \end{align*}
    Since $\ell$ is increasing, we obtain
    \begin{align*}
      \int_{0}^{1} \frac{1 - 2 v}{\ell(\norm{\nabla f(x_{k})} + \norm{\nabla f(x_k)} v)} d v \geq 0
    \end{align*}
    and 
    \begin{align*}
      \norm{\nabla f(x_{k+1})}^2 \leq \norm{\nabla f(x_{k})}^2.
    \end{align*}
  \end{proof}

\section{Proof of Corollary~\ref{thm:convex_cor_sub}}
\begin{proof}
  For all $\delta > 0,$ using Theorem~\ref{thm:decreas_norm}, Corollary~\ref{cor:conv_nonconvex_incr}, and the fact that $\sup_{x \geq 0}\psi_2(x) > 0,$ there exists $\bar{T}_1(\ell, \Delta, \delta)$ large enough, which depends only on $\ell,$ $\Delta,$ and $\delta,$ such that $\norm{\nabla f(x_{\bar{T}_1})} \leq \delta.$ Since $\ell$ is continuous, there exists $\bar{\delta}(\ell) > 0,$ which depends only on $\ell,$ such that $\ell(2 \bar{\delta}(\ell)) \leq 2 \ell(0).$ Taking $\bar{T} \equiv \bar{T}\left(\ell, \Delta\right) \eqdef \bar{T}_1(\ell, \Delta, \bar{\delta}(\ell)),$ we get $\norm{\nabla f(x_{\bar{T}})} \leq \bar{\delta}(\ell)$ and 
  \begin{align*}
    \eqref{eq:YXCruXFYWGMYZBotzm_new} \leq \bar{T} + \frac{\ell(2 \bar{\delta}(\ell)) R^2}{2 \varepsilon} \leq \bar{T} + \frac{\ell(0) R^2}{\varepsilon}.
  \end{align*}
  Thus Algorithm~\ref{alg:main} converges after $\frac{\ell(0) R^2}{\varepsilon} + \bar{T}\left(\ell, \Delta\right)$ iterations because it converges after \eqref{eq:YXCruXFYWGMYZBotzm_new} iterations (Theorem~\ref{thm:convex}). 

  Additionally, we know that 
  \begin{align*}
    \eqref{eq:YXCruXFYWGMYZBotzm_new} \leq \bar{T}(\norm{\nabla f(x_0)}) + \frac{\ell(2 \norm{\nabla f(x_0)}) R^2}{2 \varepsilon} = \frac{\ell(2 \norm{\nabla f(x_0)}) R^2}{2 \varepsilon},
  \end{align*}
  where $\bar{T}(\norm{\nabla f(x_0)}) = 0$ because it is required zero iterations to find a point such that the norm of a gradient is $\norm{\nabla f(x_0)}$ (the starting point satisfies this criterion). In total, Algorithm~\ref{alg:main} converges after at most
  \begin{align*}
    \min\left\{\frac{\ell(0) R^2}{\varepsilon} + \bar{T}\left(\ell, \Delta\right), \frac{\ell(2 \norm{\nabla f(x_0)}) R^2}{2 \varepsilon}\right\} \leq \frac{\ell(0) R^2}{\varepsilon} + \min\left\{\bar{T}\left(\ell, \Delta\right), \frac{\ell(2 \norm{\nabla f(x_0)}) R^2}{2 \varepsilon}\right\}
  \end{align*}
  iterations.
\end{proof}

\section{Proof of Corollary~\ref{thm:convex_cor}}
\begin{proof}
  Using Theorem~\ref{thm:main_theorem} and Theorem~\ref{thm:decreas_norm}, we get
  \begin{align*}
    \norm{\nabla f(x_{T - 1})}^2 \leq \frac{4 \ell(2 \norm{\nabla f(x_0)}) \Delta}{T}.
  \end{align*}
  Thus, for all $\delta > 0,$ one can take $\bar{T}_1(\ell, \Delta, \norm{\nabla f(x_0)}, \delta) = \frac{4 \ell(2 \norm{\nabla f(x_0)}) \Delta}{\delta}$ to ensure that $\norm{\nabla f(x_{\bar{T}_1})} \leq \delta.$ Since $\ell$ is continuous, there exists $\bar{\delta}(\ell) > 0,$ which depends only on $\ell,$ such that $\ell(2 \bar{\delta}(\ell)) \leq 2 \ell(0).$ Taking $\bar{T} \equiv \bar{T}\left(\ell, \Delta, \norm{\nabla f(x_0)}\right) \eqdef \bar{T}_1(\ell, \Delta, \norm{\nabla f(x_0)}, \bar{\delta}(\ell))$, we get $\norm{\nabla f(x_{\bar{T}})} \leq \bar{\delta}(\ell)$ and 
  \begin{align*}
    \eqref{eq:YXCruXFYWGMYZBotzm_new} \leq \bar{T} + \frac{\ell(2 \bar{\delta}(\ell)) R^2}{2 \varepsilon} \leq \bar{T} + \frac{\ell(0) R^2}{\varepsilon}.
  \end{align*}
  Thus Algorithm~\ref{alg:main} converges after \eqref{eq:ZnzTTjAuiSZC} iterations because it converges after \eqref{eq:YXCruXFYWGMYZBotzm_new} iterations (Theorem~\ref{thm:convex}). One can get the second term in the $\min$ using the same reasoning as at the end of the proof of Corollary~\ref{thm:convex_cor_sub}.
\end{proof}

\section{Proof of Theorem~\ref{thm:main_theorem_stoch}}

Let us recall the standard result regarding the large derivation of the sum of i.i.d. random vectors.

\begin{lemma}[Simplified Version of Theorem 2.1 from \citep{juditsky2008large}]
  \label{lemma:large_deviations}
  Assume that $\{\eta_i\}_{i=1}^m$ are i.i.d. random vectors such that $\eta_i \in \R^d,$ $\Exp{\eta_i} = 0,$ and $\Exp{\exp\left(\norm{\eta_i}^2 / \sigma^2\right)} \leq \exp(1)$ for all $i \in [n].$ Then
  \begin{align*}
    \Prob{\norm{\sum_{i=1}^{m} \eta_i} \geq \sqrt{2} (1 + \lambda) \sqrt{m} \sigma} \leq \exp(-\lambda^2 / 3)
  \end{align*}
  for all $\lambda \geq 0.$
\end{lemma}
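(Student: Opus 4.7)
The plan is to establish a dimension-free bound on the moment generating function of $\|S\|^2$, where $S = \sum_{i=1}^m \eta_i$, and then to apply Chernoff's inequality to extract the stated tail bound. The nontrivial point is that the Orlicz-type hypothesis $\Exp{\exp(\|\eta_i\|^2/\sigma^2)} \leq e$ controls only the norm of each summand rather than scalar linear functionals, so the standard Cram\'er approach has to be adapted to produce a bound that does not involve the dimension $d$.

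First I would record a polynomial moment bound that follows by expanding the exponential: $\Exp{\|\eta_i\|^{2k}} \leq e \cdot k!\, \sigma^{2k}$ for every $k \geq 0$. From this I would prove a one-step conditional MGF estimate: for any deterministic $v \in \R^d$, any zero-mean $\eta$ satisfying the hypothesis, and any $\alpha \in (0, c_0/\sigma^2]$ with a small absolute constant $c_0$,
\begin{align*}
\Exp{\exp\bigl(\alpha \|v + \eta\|^2\bigr)} \leq \exp\bigl((1 + C\alpha\sigma^2)\alpha\|v\|^2 + C'\alpha\sigma^2\bigr)
\end{align*}
for absolute constants $C, C'$. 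The expansion $\|v + \eta\|^2 = \|v\|^2 + 2\inp{v}{\eta} + \|\eta\|^2$ combined with Cauchy--Schwarz separates the cross term from the squared-norm term; the scalar variable $\inp{v}{\eta}$ is zero-mean with $\Exp{\inp{v}{\eta}^{2k}} \leq k!\, \|v\|^{2k}\sigma^{2k}$ by the moment bound, which yields a sub-Gaussian MGF of order $\exp(O(\alpha^2 \|v\|^2 \sigma^2))$, while $\Exp{\exp(2\alpha\|\eta\|^2)} \leq e^{2\alpha\sigma^2}$ follows from Jensen's inequality applied to $x \mapsto x^{2\alpha\sigma^2}$.

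Next I would iterate the one-step estimate conditionally on $S_{k-1}$. Defining $\beta_m = \alpha$ and $\beta_{k-1} = (1 + C\beta_k\sigma^2)\beta_k$, induction shows
\begin{align*}
\Exp{\exp(\alpha \|S_m\|^2)} \leq \exp\Big(\beta_0 \|S_0\|^2 + \sum_{k=1}^m C'\beta_k \sigma^2\Big) = \exp\Big(\sum_{k=1}^m C'\beta_k \sigma^2\Big),
\end{align*}
since $S_0 = 0$. Choosing $\alpha = \Theta(1/(m\sigma^2))$ keeps $\beta_k\sigma^2 = O(1/m)$ throughout, so the growth factors multiply to $O(1)$ and the accumulated additive term is also $O(1)$, yielding $\Exp{\exp(\|S\|^2/(C_1 m \sigma^2))} \leq e^{C_2}$ for absolute constants $C_1, C_2$. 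Chernoff then gives $\Prob{\|S\| \geq t} \leq \exp(C_2 - t^2/(C_1 m \sigma^2))$, and the substitution $t = \sqrt{2}(1 + \lambda)\sqrt{m}\sigma$ produces a bound of the form $\exp(\text{const} - \text{const}'(1+\lambda)^2)$.

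The main obstacle is recovering the sharp numerical constants $\sqrt{2}$ in the deviation threshold and $1/3$ in the tail exponent; the crude estimates above do not hit these numbers exactly. To pin them down, one has to optimize over the free parameter $\alpha$ after fixing $t$ and carefully tune the Cauchy--Schwarz split (for instance, using the sharper inequality $\|v+\eta\|^2 \leq (1+s)\|v\|^2 + (1 + 1/s)\|\eta\|^2$ with $s$ chosen to match the desired threshold). Since the statement is quoted as a simplified form of Theorem~2.1 of \citep{juditsky2008large}, an alternative is simply to invoke that reference, whose proof proceeds along essentially the lines above.
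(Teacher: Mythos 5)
The paper does not actually prove this lemma: it is imported verbatim (as a ``simplified version'') from Theorem~2.1 of \citet{juditsky2008large} and used as a black box in the proof of Theorem~\ref{thm:main_theorem_stoch}. Your sketch reconstructs the standard argument behind that theorem --- bound $\Exp{\|\eta\|^{2k}} \leq e\,k!\,\sigma^{2k}$ by expanding the exponential, prove a one-step conditional MGF estimate for $\|v+\eta\|^2$ by splitting off the cross term $2\inp{v}{\eta}$ (sub-Gaussian via the moment bounds) from $\|\eta\|^2$ (controlled by Jensen), iterate over the martingale $S_k$, and finish with Chernoff --- and this outline is sound and dimension-free. The one genuine shortfall, which you correctly flag yourself, is that the argument as written only yields $\Prob{\|S\|\geq t(1+\lambda)\sqrt{m}\sigma}\leq C\exp(-c\lambda^2)$ with unspecified absolute constants, not the specific threshold $\sqrt{2}(1+\lambda)\sqrt{m}\sigma$ and exponent $\lambda^2/3$ claimed in the statement; pinning those down requires the careful parameter optimization carried out in the reference. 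Since the paper itself simply cites \citet{juditsky2008large} for exactly these constants, your fallback of invoking that reference is the same resolution the paper adopts, and for the downstream use in Theorem~\ref{thm:main_theorem_stoch} any absolute constants would suffice (they would only change the numerical factors in the batch size $B$).
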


We now proof the main theorem:

\begin{proof}
  Using Lemma~\ref{lemma:large_deviations}, we get
  \begin{align*}
    &\Prob{\norm{\frac{1}{B} \sum_{j=1}^{B} (\nabla f(x_k; \xi_{kj}) - \nabla f(x_k))} \geq \sqrt{2} (1 + \lambda) \sigma / \sqrt{B}} \\
    &=\Exp{\ProbCond{\norm{\frac{1}{B} \sum_{j=1}^{B} (\nabla f(x_k; \xi_{kj}) - \nabla f(x_k))} \geq \sqrt{2} (1 + \lambda) \sigma / \sqrt{B}}{x_k}} \leq \exp(-\lambda^2 / 3)
  \end{align*}
  for all $k \geq 0,$ where $\ProbCond{\cdot}{x_k}$ is the probability conditioned on $x_k.$  
  Using the union bound, we obtain
  \begin{align*}
    &\Prob{\bigcup_{k=0}^{T - 1} \left\{\norm{\frac{1}{B} \sum_{j=1}^{B} (\nabla f(x_k; \xi_{kj}) - \nabla f(x_k))} \geq \sqrt{2} (1 + \lambda) \sigma / \sqrt{B}\right\}} \\
    &\leq \sum_{k=0}^{T - 1}  \Prob{\norm{\frac{1}{B} \sum_{j=1}^{B} (\nabla f(x_k; \xi_{kj}) - \nabla f(x_k))} \geq \sqrt{2} (1 + \lambda) \sigma / \sqrt{B}} \leq T \exp(-\lambda^2 / 3).
  \end{align*}
  Taking $\lambda = \sqrt{3 \log(T / \delta)}$ and $B = \max\left\{\left\lceil\frac{32 \left(1 + \sqrt{3 \log(T / \delta)}\right)^2 \sigma^2}{\varepsilon}\right\rceil, 1\right\},$ we can conclude that with probability $1 - \delta,$ 
  \begin{align}
    \label{eq:JqcRXoaKBYDLWQq}
    \norm{g_k - \nabla f(x_k)} = \norm{\frac{1}{B} \sum_{j=1}^{B} (\nabla f(x_k; \xi_{kj}) - \nabla f(x_k))} \leq \frac{\sqrt{\varepsilon}}{4}
  \end{align}
  and 
  \begin{align}
    \label{eq:JqcRXoaKBYDLWQq2}
    \norm{\nabla f(x_k)} - \frac{\sqrt{\varepsilon}}{4} \leq \norm{g_k} \leq \norm{\nabla f(x_k)} + \frac{\sqrt{\varepsilon}}{4}
  \end{align}
  for all $k \in \{0, \dots, T - 1\},$ where we use the triangle inequality.

  To simplify the notation, we assume that all subsequent derivations hold with probability $1 - \delta,$ omitting the explicit mention of it each time. Using proof by contradiction, we now prove that there exists $k \in \{0, \dots, T - 1\}$ such that $\norm{\nabla f(x_k)}^2 \leq \varepsilon.$ Assume that
  \begin{align}
    \label{eq:LZeCrMbWiKSvRck}
    \norm{\nabla f(x_k)}^2 > \varepsilon
  \end{align}
  for all $k \in \{0, \dots, T - 1\}.$ Then
  \begin{align}
    \label{eq:UETCLkEuGrYp}
    \norm{g_k - \nabla f(x_k)} \leq \frac{\norm{\nabla f(x_k)}}{4}
  \end{align}
  and 
  \begin{align}
    \label{eq:UETCLkEuGrYp2}
    \frac{3}{4} \norm{\nabla f(x_k)} \leq \norm{g_k} \leq \frac{5}{4}\norm{\nabla f(x_k)}
  \end{align}
  from \eqref{eq:JqcRXoaKBYDLWQq} and \eqref{eq:JqcRXoaKBYDLWQq2}. Since $\ell$ is increasing, we obtain
  \begin{align*}
    \gamma_k \norm{g_k}
    &= \frac{1}{5 r}\int_{0}^{1} \frac{\norm{g_k} d v}{\ell\left(\norm{g_k} + \norm{g_k} v\right)} \\
    &\overset{\eqref{eq:UETCLkEuGrYp2}}{\leq} \frac{1}{4} \int_{0}^{1} \frac{\norm{\nabla f(x_k)} d v}{r \times \ell\left(\frac{1}{2} \norm{\nabla f(x_k)} + \frac{1}{2} \norm{\nabla f(x_k)} v\right)}.
  \end{align*}
  By the definition of the ratio $r,$ we get $r \geq \frac{\ell(\norm{\nabla f(x_k)} + \norm{\nabla f(x_k)} v)}{\ell(\frac{1}{2} \norm{\nabla f(x_k)} + \frac{1}{2} \norm{\nabla f(x_k)} v)}.$ Therefore
  \begin{equation}
  \begin{aligned}
    \gamma_k \norm{g_k}
    &\leq \frac{1}{4} \int_{0}^{1} \frac{\norm{\nabla f(x_k)} d v}{\ell\left(\norm{\nabla f(x_k)} + \norm{\nabla f(x_k)} v\right)} \\
    &\leq \int_{0}^{1} \frac{\frac{1}{4} \norm{\nabla f(x_k)} d v}{\ell\left(\norm{\nabla f(x_k)} + \frac{1}{4} \norm{\nabla f(x_k)} v\right)} 
    = q(\nicefrac{1}{4} \norm{\nabla f(x_k)};\norm{\nabla f(x_k)}),
    \label{eq:GDwRqmSJByM}
  \end{aligned}
  \end{equation}
  where $q$ is defined in Definition~\ref{def:q_function}. Notice that $x_{k+1} = x_{k} - \gamma_k g_{k} = x_{k} - \gamma_k \norm{g_{k}} \frac{g_{k}}{\norm{g_{k}}}$ and $\gamma_k \norm{g_{k}} \leq q(\frac{1}{4} \norm{\nabla f(x_k)};\norm{\nabla f(x_k)}) < q_{\max}(\norm{\nabla f(x_k)}).$ Thus, $x_{k+1} \in \mathcal{X}$ due to Lemma~\ref{lemma:aux_2} and we can use Lemma~\ref{lemma:second_alt} to obtain
  \begin{align*}
    f(x_{k+1}) 
    &\leq f(x_k) - \gamma_k \inp{\nabla f(x_k)}{g_k} + \int_{0}^{\gamma_k \norm{g_k}} q^{-1}(\tau; \norm{\nabla f(x_k)}) d \tau \\
    &= f(x_k) - \gamma_k \norm{\nabla f(x_k)}^2 - \gamma_k \inp{\nabla f(x_k)}{g_k - \nabla f(x_k)} + \int_{0}^{\gamma_k \norm{g_k}} q^{-1}(\tau; \norm{\nabla f(x_k)}) d \tau \\
    &\overset{\textnormal{C-S}}{\leq} f(x_k) - \gamma_k \norm{\nabla f(x_k)}^2 + \gamma_k \norm{\nabla f(x_k)} \norm{g_k - \nabla f(x_k)} + \int_{0}^{\gamma_k \norm{g_k}} q^{-1}(\tau; \norm{\nabla f(x_k)}) d \tau \\
    &\overset{\eqref{eq:UETCLkEuGrYp}}{\leq} f(x_k) - \frac{3 \gamma_k}{4} \norm{\nabla f(x_k)}^2 + \int_{0}^{\gamma_k \norm{g_k}} q^{-1}(\tau; \norm{\nabla f(x_k)}) d \tau.
  \end{align*}
  Since $q^{-1}$ is increasing (Propostion~\ref{prop:q}), we have
  \begin{align*}
    f(x_{k+1}) 
    &\leq f(x_k) - \frac{3 \gamma_k}{4} \norm{\nabla f(x_k)}^2 + \gamma_k \norm{g_k} q^{-1}(\gamma_k \norm{g_k}; \norm{\nabla f(x_k)}) \\
    &\overset{\eqref{eq:GDwRqmSJByM}}{\leq} f(x_k) - \frac{3 \gamma_k}{4} \norm{\nabla f(x_k)}^2 + \frac{\gamma_k}{4} \norm{g_k} \norm{\nabla f(x_k)} \\
    &\overset{\eqref{eq:UETCLkEuGrYp2}}{\leq} f(x_k) - \frac{3 \gamma_k}{4} \norm{\nabla f(x_k)}^2 + \frac{5 \gamma_k}{16} \norm{\nabla f(x_k)}^2 \\
    &\leq f(x_k) - \frac{\gamma_k}{4} \norm{\nabla f(x_k)}^2.
  \end{align*}
  It is left to sum this inequality for $k = \{0, \dots, T - 1\}$ and use Assumption~\ref{ass:bounded_grad} to get 
  \begin{align*}
    \frac{1}{T} \sum_{k=0}^{T - 1} \gamma_k \norm{\nabla f(x_k)}^2 \leq \frac{4 \Delta}{T}
  \end{align*}
  and 
  \begin{align*}
    \min_{k \in \{0, \dots, T - 1\}} \gamma_k \norm{\nabla f(x_k)}^2 \leq \frac{4 \Delta}{T}.
  \end{align*}
  Due to Remark~\ref{remark:step_size} and \eqref{eq:UETCLkEuGrYp2},
  \begin{align*}
    \gamma_k \geq \frac{1}{5 r \ell\left(3 \norm{\nabla f(x_k)}\right)} 
  \end{align*}
  and
  \begin{align}
    \label{eq:yXWzgrhttekuobTHF}
    \min_{k \in \{0, \dots, T - 1\}} \frac{(\frac{3}{2} \norm{\nabla f(x_k)})^2}{\ell\left(3 \norm{\nabla f(x_k)}\right)}  \leq r \times \frac{45 \Delta}{T}.
  \end{align}
  Since \eqref{eq:yXWzgrhttekuobTHF} holds, we conclude that $\min_{k \in \{0, \dots, T - 1\}} \norm{\nabla f(x_k)}^2 \leq \varepsilon$ due the choice of $T$ specified in the theorem. This result contradicts \eqref{eq:LZeCrMbWiKSvRck}.
\end{proof}

\section{Function $- \mu x + e^{L_1 x}$ is $(L_1 \mu, L_1)$--smooth}
\label{sec:example}
In this section, we prove that the convex function $f \,:\, \R \to \R$ such that $f(x) = - \mu x + e^{L_1 x}$ is $(L_1 \mu, L_1)$--smooth. Indeed,
\begin{align*}
  \norm{\nabla^2 f(x)} = L_1^2 e^{L_1 x}.
\end{align*}
Then
\begin{align*}
  \norm{\nabla^2 f(x)} = L_1^2 e^{L_1 x} \leq L_1 \mu + L_1 \abs{L_1 e^{L_1 x} - \mu} \leq L_1 \mu + L_1 \norm{\nabla f(x)}
\end{align*}
because
\begin{align*}
  \norm{\nabla f(x)}^2 = \left(L_1 e^{L_1 x} - \mu\right)^2. 
\end{align*}
Finally,
\begin{align*}
  \norm{\nabla^2 f(x)} \leq L_1 \mu + L_1 \norm{\nabla f(x)}.
\end{align*}
At the same time, for any $A \geq 0,$ there exists $x$ such that $L_1 e^{L_1 x} \geq 2 \mu + \frac{2 A}{L_1}.$ We get $\frac{L_1}{2} e^{L_1 x} \leq \norm{\nabla f(x)} \leq L_1 e^{L_1 x}$ and 
\begin{align*}
  A + 2 L_1 \norm{\nabla f(x)} \geq \norm{\nabla^2 f(x)} = L_1^2 e^{L_1 x} \geq A + \frac{L_1}{2} \norm{\nabla f(x)}.
\end{align*}
Thus, the second constant in the assumption can be improved by at most of factor $4.$

\end{document}